\newtheorem{theorem}{Theorem}[section]
\newtheorem{corollary}[theorem]{Corollary}
\newtheorem{lemma}[theorem]{Lemma}
\newtheorem{proposition}[theorem]{Proposition}
\theoremstyle{definition}
\newtheorem{definition}[theorem]{Definition}
\theoremstyle{remark}
\theoremstyle{remark}
\newtheorem{remark}[theorem]{Remark}
\theoremstyle{definition}
\newtheorem{example}[theorem]{Example}
\theoremstyle{definition}
\newtheorem{fact*}{Fact}
\newcounter{theoremx}
\theoremstyle{plain}
\newtheorem{thmx}{Theorem}[theoremx]
\DeclareMathOperator{\dom}{dom}
\DeclareMathOperator\vecc{{\bf vec}}
\newcommand{\B}{\mathcal{B}}
\newcommand{\Z}{\mathbb{Z}}
\newcommand{\N}{\mathbb{N}}
\newcommand{\cA}{\mathcal{A}}
\newcommand{\C}{\mathbb{C}}
\newcommand{\R}{\mathbb{R}}
\newcommand{\abs}[1]{{\left\vert#1\right\vert}}
\newcommand{\set}[1]{{\left\{#1\right\}}}
\newcommand{\til}{\raise.17ex\hbox{$\scriptstyle\mathtt{\sim}$}}
\newcommand{\bbm}{\left[ \begin{smallmatrix}}
\newcommand{\ebm}{\end{smallmatrix} \right]}
\newcommand{\bpm}{\begin{pmatrix}}
\newcommand{\epm}{\end{pmatrix}}
\newcommand{\bal}{\begin{align*}}
\newcommand{\eal}{\end{align*}}
\numberwithin{equation}{section}
\newlength{\Mheight}
\newlength{\cwidth}
\newcommand{\dfn}[1]{{\bf #1}\index{#1}}
\newcommand\psub{\mathrel{%
  \ooalign{\raise0.2ex\hbox{$\subset$}\cr\hidewidth\raise-0.8ex\hbox{\box{0.9}{$\sim$}}\hidewidth\cr}}}
\newcommand\piso{\mathrel{%
  \ooalign{\raise0.2ex\hbox{$=$}\cr\hidewidth\raise-0.5ex\hbox{\scalebox{0.9}{$\sim$}}\hidewidth\cr}}}
\newcommand\mfq{\mathfrak{q}}
\newcommand\mfa{\mathfrak{a}}
\newcommand{\bbp}{\mathbbm{p}}
\newcommand{\bbd}{\mathbbm{d}}
\newcommand{\bbq}{\reflectbox{$\mathbbm{p}$}}
\newcommand{\bbx}{\mathbbm{x}}
\newcommand{\bby}{\mathbbm{y}}
\newcommand{\bbz}{\mathbbm{z}}
\newcommand{\bbw}{\mathbbm{w}}
\newcommand{\bbG}{\mathbbm{G}}
\newcommand{\bba}{\mathbbm{a}}
\newcommand*{\defeq}{\mathrel{\vcenter{\baselineskip0.6ex \lineskiplimit0pt
                     \hbox{\small.}\hbox{\small.}}}%
                     =}
\def\moverlay{\mathpalette\mov@rlay}
\def\mov@rlay#1#2{\leavevmode\vtop{
                \baselineskip\z@skip \lineskiplimit-\maxdimen
                \ialign{\hfil$#1##$\hfil\cr#2\crcr}}}
\def\moverlay{\mathpalette\mov@rlay}
\def\mov@rlay#1#2{\leavevmode\vtop{
                \baselineskip\z@skip \lineskiplimit-\maxdimen
                \ialign{\hfil$#1##$\hfil\cr#2\crcr}}}
\DeclareFontFamily{OMX}{MnSymbolE}{}
\DeclareSymbolFont{MnLargeSymbols}{OMX}{MnSymbolE}{m}{n}
\DeclareFontShape{OMX}{MnSymbolE}{m}{n}{
    <-6>  MnSymbolE5
   <6-7>  MnSymbolE6
   <7-8>  MnSymbolE7
   <8-9>  MnSymbolE8
   <9-10> MnSymbolE9
  <10-12> MnSymbolE10
  <12->   MnSymbolE12
}{}
\DeclareFontShape{OMX}{MnSymbolE}{b}{n}{
    <-6>  MnSymbolE-Bold5
   <6-7>  MnSymbolE-Bold6
   <7-8>  MnSymbolE-Bold7
   <8-9>  MnSymbolE-Bold8
   <9-10> MnSymbolE-Bold9
  <10-12> MnSymbolE-Bold10
  <12->   MnSymbolE-Bold12
}{}
\let\llangle\@undefined
\let\rrangle\@undefined
\DeclareMathDelimiter{\llangle}{\mathopen}%
                     {MnLargeSymbols}{'164}{MnLargeSymbols}{'164}
\DeclareMathDelimiter{\rrangle}{\mathclose}%
                     {MnLargeSymbols}{'171}{MnLargeSymbols}{'171}
\newcommand*{\mpsymbol}{
        \!\protect\raisebox{0.534ex}{\protect\scalebox{.28}{
                        \begin{tikzpicture}[line width=0.6ex]
                        \draw[arrows={<->}]
                        (0,0) -- (5ex,0);
                        \end{tikzpicture}
                }}\!
        }
\newcommand{\lra}{\mpsymbol}
\newcommand{\fps}[1]{\llangle #1 \rrangle}
\newcommand{\fpsx}{\fps{\bbx}}
\newcommand{\fralg}[1]{\langle #1 \rangle}
\newcommand{\fax}{\fralg{\bbx}}
\newcommand{\plangle}{\moverlay{(\cr<}}
\newcommand{\prangle}{\moverlay{)\cr>}}
\newcommand{\skf}[1]{\plangle #1 \prangle}
\newcommand{\hypj}[1]{J^{\textup{hyp}}_{#1}}
\newcommand{\ew}{\mathbbm{1}}
\newcommand{\x}{\bm{x}}
\newcommand{\y}{\bm{y}}
\newcommand{\z}{\bm{z}}
\newcommand{\h}{\bm{h}}
\newcommand{\w}{\bm{w}}
\renewcommand{\k}{\bm{k}}
\renewcommand{\u}{\bm{u}}
\newcommand{\GM}{\textup{GM}}
\newcommand{\UD}{\textup{UD}}
\def\BState{\State\hskip-\ALG@thistlm}
\title[The free Grothendieck theorem]%
{The free Grothendieck theorem}
\author{Meric L. Augat}
\begin{document}

\begin{abstract}
   The main result of this article establishes the free analog
 of Grothendieck's Theorem on bijective polynomial
  mappings of $\mathbb C^g$. Namely, we show if $p$ is a  polynomial
   mapping in $g$ freely noncommuting variables 
  sending $g$-tuples of matrices (of the same size) to
 $g$-tuples of matrices (of the same size) that is injective,
 then it has a free polynomial inverse.
	
	Other results include an algorithm that tests if a free polynomial mapping 
$p$ has a polynomial inverse
 (equivalently is injective;  equivalently is bijective).
   Further, a  class of free algebraic functions, called hyporational, lying strictly between the
 free rational functions and the free algebraic functions are 
		identified. They  play a significant 
 role in the proof of the main result.
\end{abstract}

\maketitle
\section{Introduction}
\label{sec:Introduction}
A remarkable pair of theorems of Grothendieck \cite{Gro66,Gro67} say if $p:\C^g\to \C^g$ is an injective polynomial, 
	then $p$ is bijective and its inverse is a polynomial. 
Later degree bounds on the inverse were discovered.
These results are of course intimately connected with the fascinating Jacobian conjecture (see for instance \cite{BCW82,vdE00}) 
	and the question of tame versus wild automorphisms of the polynomial ring (see for instance \cite{SU03,SU04,UY04,U06}).

In this article we prove the free Grothendieck theorem.
Our approach involves careful analysis of the noncommutative Jacobian matrix as found in \cite{Reu92}, the theories of free rational series 
	\cite{BerReu11}  and their realizations \cite{KlVo17,Vol15}, formal power series in noncommuting variables \cite{Sta11},
	free analysis \cite{HM04,HKM11}, proper algebraic systems \cite{SalSoi78,PetSal09}, 
	free derivatives\cite{Pas14} and skew fields \cite{Cohn95}.
We also make use of some new machinery including the hyporational functions and the hypo-Jacobian matrix defined later in this paper.

To state the result, fix a positive integer $g$ and let $\x=(x_1,\dots,x_g)$ denote a tuple of freely noncommuting indeterminants.
A free polynomial (in $g$-variables) is a finite $\C$ linear combination of words in $x$. For positive integers $n$,
	let $M_n(\C)^g$ denote the $g$-tuples of $n\times n$ matrices over $\C$ and let $M(\C)^g$ denote the sequence $(M_n(\C)^g)_n$. 
A free polynomial $f$ induces a sequence of maps $f[n]:M_n(\C)^g\to M_n(\C)$ by evaluation, $X\mapsto f(X)$. 
We let $f:M(\C)^g\to M(\C)$ denote this sequence.
A \dfn{free polynomial mapping} $p:M(\C)^g\to M(\C)^g$ is thus a $g$-tuple of sequences, $p=(p^1,\dots, p^{g})$, 
	that is, each $p^i$ is a free polynomial.
The polynomial mapping $p$ is injective (resp. surjective, bijective) if each $p[n]$ is injective (resp. surjective, bijective).  
Of course if $p[n]$ is injective, then considered as a polynomial in $g n^2$ commuting variables, 
	it is bijective and has a polynomial inverse.
 The following free polynomial analog of Grothendieck's Theorem was implicitly
 conjectured in \cite{Pas14}.
 
\begin{restatable*}[(Free Grothendieck Theorem)]{theorem}{THMfreeGroth}
	\label{thm:Free Groth Thm}
	If $p:M(\C)^g\to M(\C)^g$ is an injective free polynomial mapping, 
  then there is a free polynomial mapping $q$ such that $p\circ q(x)=x =
 q\circ p(x)$; that is, $p$ has a free polynomial inverse.
\end{restatable*}

% %It is natural to connect free polynomial mappings to endomorphisms of the free algebra $\C\fax$.
% %If $\phi$ is an endomorphism of $\C\fax$ then we say it induces the polynomial mapping $p$ defined by
% %$p_i(\x) = \phi(x_i)$, $1\leq i\leq g$. 
% %Applying results from \cite{DL82} and \cite{Sch85} we arrive at the following complete version of the free Jacobian Conjecture.
% %
% %\begin{theorem}
% %	Suppose $\phi$ is an endomorphism of $\C\fax$ and $\phi$ induces the free polynomial mapping $p:M(\C)^g\to M(\C)^g$.
% %	The following are equivalent.
% %	\begin{enumerate}
% %		\item $p$ is injective;
% %		\item $J(\phi)$, the Jacobian matrix of $\phi$ is invertible over $\C\fax\otimes \C\fralg{\bby}$;
% %		\item $\phi$ is an epic endomorphism;
% %		\item $\phi$ is an automorphism.
% %	\end{enumerate}
% %\end{theorem}

% We show that in general the (formal) inverse of a \dfn{free polynomial mapping} $p = (p^1,\dots, p^{\,g}):M(\C)^g\to M(\C)^g$ 
% 	is a free algebraic function and if the inverse is hyporational (the relevant definitions appear later in this introduction), 
% 	then it is a polynomial if and only if $p$ is injective.
Before describing our methods in further detail, we pause to note that Theorem~\ref{thm:Free Groth Thm} is of course
	related to the study of automorphisms, and the question of tame versus wild automorphisms of the free algebra 
	(see for instance \cite{D82,DY07,DY07b,M70,U07}).
Pascoe \cite{Pas14} proves a free (freely noncommutative) inverse function theorem and uses this theorem to establish a 
	free analog of the Jacobian conjecture, stated below.

\begin{restatable}[Free Jacobian Conjecture]{thmx}{THMfreeJacConj}
	\label{thm:Free Jacobian Conjecture}
	Suppose $p:M(\C)^g\to M(\C)^g$ is a free polynomial mapping.
	The following are equivalent:
	\begin{enumerate}[label=(\roman*)]
		\item $Dp(X)[H]$ is nonsingular for each $X\in M(\C)^g$; that is, for each positive integer $n$ and
                  each tuple $X\in M_n(\C)^g$, the linear  mapping $M_n(\C)^g \ni H\mapsto Dp(X)[H]$ is non-singular; 
		\item \label{it:injective} $p$ is injective;
		\item \label{it:bijective} $p$ is bijective;
		\item \label{it:agrees w poly} $p^{-1}$ exists as a {\it free function},
    and for each $n$, $p^{-1}\lvert_{M_n(\C)^g}$ agrees with a free polynomial mapping;
    	namely, there exists a free mapping $g:M(\C)^g\to M(\C)^g$ and free polynomial
  		mappings $q_n$ such that $p(g(X))=X=g(p(X))$ for all $X\in M(\C)^g$ and $g(X)=q_n(X)$ for each $n$
 		and $X\in M_n(\C)^g.$
	\end{enumerate}
\end{restatable}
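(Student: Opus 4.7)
The plan is to obtain Theorem~\ref{thm:Free Jacobian Conjecture} as a short consequence of the Free Grothendieck Theorem together with a standard matrix‐dilation argument. Concretely, I would verify the cycle
\[
\text{(i)} \Longrightarrow \text{(ii)} \Longrightarrow \text{(iv)} \Longrightarrow \text{(iii)} \Longrightarrow \text{(ii)} \Longrightarrow \text{(i)},
\]
handling each arrow in turn.

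The three easy pieces are (iii)\,$\Rightarrow$\,(ii), (iv)\,$\Rightarrow$\,(iii), and (ii)\,$\Rightarrow$\,(iv). The first is trivial. For (iv)\,$\Rightarrow$\,(iii), if $g$ is a free inverse of $p$ agreeing at level $n$ with a polynomial mapping $q_n$, then $p[n]$ is bijective with inverse $q_n[n]$. For (ii)\,$\Rightarrow$\,(iv), I would invoke Theorem~\ref{thm:Free Groth Thm}: injectivity of $p$ produces a free polynomial mapping $q$ with $p\circ q=q\circ p=x$, so taking $g:=q$ and $q_n:=q$ for every $n$ verifies clause (iv). (This is where all the work of the paper is hidden.)

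Next I would treat (ii)\,$\Rightarrow$\,(i) by matrix dilation. Suppose (i) fails at some $X\in M_n(\C)^g$, so there is a nonzero $H\in M_n(\C)^g$ with $Dp(X)[H]=0$. Form the block tuples
\[
\widetilde X = \begin{pmatrix} X & H \\ 0 & X \end{pmatrix}, \qquad Y = \begin{pmatrix} X & 0 \\ 0 & X \end{pmatrix} \in M_{2n}(\C)^g,
\]
read entry-wise. For a word $w$, a direct computation shows $w(\widetilde X)$ is block upper-triangular with diagonal blocks $w(X)$ and upper-right block $Dw(X)[H]$; by linearity the same formula holds component-wise for $p$. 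Hence $p(\widetilde X)-p(Y)$ has zero diagonal blocks and upper-right block $Dp(X)[H]=0$, so $p(\widetilde X)=p(Y)$ while $\widetilde X\neq Y$, contradicting injectivity. The remaining implication (i)\,$\Rightarrow$\,(ii) is the free-polynomial specialization of Pascoe's free inverse function theorem from \cite{Pas14}, which I would cite directly: a globally nonsingular free derivative already forces a free inverse, hence in particular injectivity.

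The main obstacle in this proof is, of course, not any of the steps above — all are either trivial, classical, or a one-line citation. The real work is the Free Grothendieck Theorem invoked at (ii)\,$\Rightarrow$\,(iv), whose proof will require the hyporational functions, the hypo-Jacobian matrix, and the realization-theoretic machinery for free rational series described in the introduction.
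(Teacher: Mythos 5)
Your proposal has one fatal structural flaw: the implication (ii)\,$\Rightarrow$\,(iv) cannot be obtained from Theorem~\ref{thm:Free Groth Thm} within the logic of this paper, because the paper's proof of Theorem~\ref{thm:Free Groth Thm} takes Theorem~\ref{thm:Free Jacobian Conjecture} as an input in several essential places. The paper does not prove Theorem~\ref{thm:Free Jacobian Conjecture} at all --- it is a lettered theorem cited from Pascoe's work \cite{Pas14} --- and it is then used in Proposition~\ref{prop:p bij F bij} (to pass between bijectivity of $p$ and of its scion $F$), in Lemma~\ref{lem:q agrees with poly} (whose content is precisely item (iv) and which feeds the Nullstellensatz argument for Theorem~\ref{thm:PMI is poly}), and again in Theorem~\ref{thm:Pre-Free Groth Thm}, of which the Free Grothendieck Theorem is the implication (i)\,$\Rightarrow$\,(v). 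Invoking the Grothendieck theorem to prove (ii)\,$\Rightarrow$\,(iv) is therefore circular. The non-circular route --- and the one underlying \cite{Pas14} and the paper's own Lemma~\ref{lem:q agrees with poly} --- is: injectivity gives a free inverse $g$ by Pascoe's free inverse function theorem; $g$ is free analytic (Theorem~3.1 of \cite{HKM11}), hence given by a convergent power series on each $M_n(\C)^g$; and on each level the classical (commutative) Grothendieck theorem applied to $p[n]$, viewed as a bijective polynomial in $gn^2$ variables, produces a commutative polynomial inverse whose degree truncates the power series to a free polynomial $q_n$ agreeing with $g$ on $M_n(\C)^g$. Note that this only yields a possibly different polynomial $q_n$ for each $n$; upgrading to a single $q$ is exactly the content of the Free Grothendieck Theorem and is strictly stronger than (iv).

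The remaining arrows in your cycle are fine and match the standard treatment: (iii)\,$\Rightarrow$\,(ii) and (iv)\,$\Rightarrow$\,(iii) are immediate; your dilation argument for (ii)\,$\Rightarrow$\,(i) is correct (it is the block upper-triangular evaluation identity recorded in equation \eqref{eq:D free} and Proposition~6 of \cite{Pas14}); and (i)\,$\Rightarrow$\,(ii) is legitimately a citation to Pascoe's inverse function theorem. Only the (ii)\,$\Rightarrow$\,(iv) arrow needs to be rerouted as above.
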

	
%In particular, he shows that a \dfn{free polynomial mapping} $p:M(\C)^g\to M(\C)^g,$
%\[
%	p = \begin{pmatrix} p^1 &\cdots & p^g\end{pmatrix},
%\]
%	is injective if and only if its derivative is
%pointwise non-singular if and only if it is invertible as a free function.
The notion of a free function is defined in Subsection \ref{ssec:Free Bird} and the free derivative $Dp$ is defined
	in Subsection \ref{ssec:scions}. 
We will often use the equivalence of items \ref*{it:injective}
 and \ref*{it:bijective}.

Three results in this article require little or no additional overhead to state. 
Assuming $p$ is injective, Theorem~\ref{thm:deg p bound} produces bounds for the degree of its inverse $q$.
As a concrete example, $\deg(q)\leq (3^g\prod_{i=1}^{g}i^3)(\deg(p)-1)+1$.
Using the degree bound, Corollary~\ref{cor:eitheror} describes an algorithm that takes as input a free polynomial $p$ and, 
	after a number of iterations depending only on the number of variables $g$ and the degree of $p$, either produces a 
	polynomial $q$ - the inverse of $p$ - or $p$ is not injective and has no polynomial inverse.

The derivative $Dp(\x)[\h]$ is a $g$-tuple of polynomials in the $2g$ freely noncommuting variables 
$(\h,\x) = (h_1,\dots,h_g,x_1,\dots,x_g)$
defined as the free analog of the directional derivative in the obvious way.
% is a $g\times g$ matrix of polynomials in the $2g$
%freely noncommuting variables $(x_1,\dots,x_g,h_1,\dots,h_g)$ defined in
%the obvious fashion.  
The result of Pascoe mentioned above -- $p$ is bijective if and only if $\h\mapsto Dp(\x)[\h]$ is pointwise non-singular --  
is strengthened by the following result.

\begin{restatable*}{corollary}{CORFpolyinv}
	\label{cor:F poly inv}
	A free polynomial $p:M(\C)^g\to M(\C)^g$ is bijective if and only if $(\h,\x)\mapsto (Dp(\x)[\h],\x)$ has a polynomial inverse.
\end{restatable*}

We use the free derivative to state and prove Theorem \ref{thm:fps implicit func thm}, 
	the implicit function theorem for nc formal power series.
It is mostly a consequence of Lemma \ref{lem:ak and bbd}.
%Theorem \ref{thm:fps implicit func thm} is a mostly a consequence of Lemma \ref{lem:ak and bbd}, we do not prove it until
%Section \ref{sec:Reduce} since its statement involves the free derivative.
We refer to \cite{AgMc16} for an in depth analysis of the implicit function theorem for several topologies on $M(\C)^g$.

\begin{restatable*}[(Implicit function theorem)]{theorem}{THMfreeIFT}
	\label{thm:fps implicit func thm}
	Suppose $f(\x,\z) \in (\C\fps{\bbx\cup\bbz})^h$.
	If $f(0,0) = 0$ and ${\partial f}/{\partial \z}(0,0)\in M_h(\C)$ is invertible, then there exists a unique 
	$\mathfrak{g}\in (\C\fpsx)^h$ such that $\mathfrak{g}(0) = 0$ and $f(\x,\mathfrak{g}(\x)) = 0$.	
\end{restatable*}

\subsection{The Jacobian,  free algebraic functions and proper algebraic systems}
The \dfn{left Jacobian matrix} \cite{Reu92} of a free polynomial 
mapping $p:M(\C)^g\to M(\C)^g$ with no constant term is the unique 
$g\times g$ matrix 
$J_p$ with free polynomial  entries such that
\[
	p(\x) = \begin{pmatrix} p^1(\x) &\cdots & p^{ g}(\x)\end{pmatrix}
		= \x J_p(\x) = \begin{pmatrix} \x_1 &\cdots & \x_g\end{pmatrix} J_p(\x).
\]
In particular,
\[
	p^{j}(\x) = \sum_{s=1}^g \x_s (J_p)_{s,j}(\x).
\]

The definition of the Jacobian matrix $J_p$ extends naturally to the case
where each $p^{j}$ is a free formal power series with no constant term. 
In this case $J_p$ is a $g\times g$ matrix with free formal power series
entries. It has a \dfn{multiplicative inverse} if there is a $g\times g$
matrix $\mathcal{J} _p$ of free formal power series such that $J_p(\x)\mathcal{J}_p(\x) =I_g$ 
and $\mathcal{J}_p(\x) J_p(\x)=I_g$. In this case $\mathcal{J}_p$ is unique
and denoted $J_p^{-1}$. The following proposition combines
Corollary~\ref{cor:inv mat reps} and Lemma~\ref{lem:ak and bbd}.

\begin{restatable*}{proposition}{PROPcompinv}
	\label{prop:comp inv}
 Suppose $p$ is a free formal power series mapping without constant term.
 There is a free formal power series mapping $q$ (without constant term) such 
 that $p$ and $q$ are compositional inverses if and only if $J_p$ has
 a (free formal power series) multiplicative inverse. In this case, $J_p(q(\x))=J_q^{-1}(\x)$.
 Moreover, $q$ is the unique solution of 
\begin{equation}
	\label{eq:qisalgebraic}
	q(\x) = \x J_p^{-1}(q(\x)).
\end{equation}
%\end{proposition}
\end{restatable*}

In the case that $J_p^{-1}$ is a polynomial, equation \eqref{eq:qisalgebraic} 
implies $q$, the inverse of $p$, is algebraic. 
To state the result more precisely requires a definition. 
Suppose $\z = (z_1,\dots,z_h)$ is an additional  tuple of freely noncommuting
 variables and 
	$\alpha(\x)[\z] =\begin{pmatrix} \alpha^1 & \cdots & \alpha^h\end{pmatrix}$ is a polynomial mapping.
We say $\alpha$ is a {\bf proper algebraic polynomial mapping}\index{proper algebraic polynomial!mapping} if $\alpha$ 
	has no constant terms and each monomial appearing in $\alpha$ with degree in $z$ of at least one, has total degree of at least two.
A tuple of free formal power series without constant term, $\beta(\x)=\begin{pmatrix} \beta^1 & \cdots & \beta^h\end{pmatrix}$,
	is a \dfn{solution} to the proper algebraic polynomial $\alpha$ if
\[
 \alpha(\x)[\beta(\x)]=\beta(\x).
\]
We say each $\beta^i$ is a \dfn{component} of the solution.
By \cite[Theorem~6.6.3]{Sta11}, every proper algebraic polynomial mapping has a unique solution.
A formal power series $\gamma(x)$ is \dfn{algebraic} if $\gamma - c$ is a component of the solution to 
	some proper algebraic polynomial mapping.

%We would like to find conditions that guarantee a polynomial mapping $p$ has a Jacobian matrix $J_p$ whose multiplicative inverse
%	$J_p^{-1}$ is a polynomial matrix.
If both $p$ and $p^{-1}$ are a polynomial mappings, then the chain rule implies $J_p^{-1}$ is a polynomial matrix 
	(Remark~\ref{rem:poly inv PMI} and \cite[Corollary~1.4]{Reu92}).
Thus the following theorem follows immediately from 
 Theorem~\ref{thm:Free Groth Thm}. We give an independent proof
 and the result itself is a key ingredient in the proof of Theorem 
 \ref{thm:Free Groth Thm}.

\begin{restatable*}{theorem}{THMpolymatinv}
		\label{thm:PMI is poly}
 If $p$ is a bijective polynomial mapping, then $J_p^{-1}$ is a polynomial (matrix).
\end{restatable*}

Example \ref{ex:classic alg} concisely points out the limitations of the Jacobian matrix; it does not detect the non-injectivity
of a polynomial.
	
%Combining Theorem~\ref{thm:PMI is poly} and Proposition~\ref{prop:comp inv} obtains the following corollary.
%
%\begin{restatable*}{corollary}{CORbijpolyalginv}
%		\label{cor:bij poly alg inv}
% If $p$ is a bijective polynomial, then its inverse $q$ is algebraic.
%\end{restatable*}

%Def of proper algebraic system and free algebraic function.

%If $J_p$ inverse is a polynomial, then $q$ is algebraic: Combine Propositions\ref{prop:q alg} and
%\ref{thm:PMI is poly}. 

\subsection{Hyporational functions}
If $p$ is a bijective free polynomial, then necessarily its inverse $q$ is an algebraic mapping.
If in addition, $q$ is rational, then  \cite[Theorem~4.2]{KlVo17} implies $q$ is a polynomial. 
In Section~\ref{sec:Hyporationals} we identify, in terms of proper algebraic polynomial mappings, free rational functions amongst 
	free algebraic mappings and extend,
 in Theorem~\ref{thm:hyporat domain poly} below, 
 \cite[Theorem~4.2]{KlVo17} to a larger class of free algebraic functions.

In fact, a formal power series is rational if and only if it is a component of the solution of some proper algebraic polynomial $\alpha$
	of the form,
\[
	\alpha(\x)[\z] = \mfa(\x) + \z \mathscr{A}(\x),
\]
	where $\mfa$ is a polynomial mapping and $\mathscr{A}$ is a polynomial matrix.
On the other hand,
example \ref{ex:classic alg revisited} shows that the solution to a
 proper algebraic polynomial mapping having degree one in $z$ is not
necessarily rational. 
% sufficient	for its solution to be rational, thus
%Thus we must extend our study to a more general class of formal power series, 
%	the hyporational series, defined below.

%A tuple of formal power series, $\beta(\x)=\begin{pmatrix} \beta^1 & \cdots & \beta^h\end{pmatrix}$
%is a \dfn{hyporational mapping} if it is the solution to a proper
A formal power series $a(\x)$ with constant term $a_\ew$ is a \dfn{hyporational series} if $a- a_\ew$ 
		is a component of the solution to a proper
algebraic polynomial mapping $\alpha(\x)[\z]$ of degree one in $z$.
Every rational series is a hyporational series and Example \ref{ex:classic alg revisited} 
shows this inclusion is proper. Hence hyporational functions lie
properly between free rational functions and free algebraic functions.
The following result shows that hyporationals enjoy some of the same regularity properties as rationals.

\begin{restatable*}{theorem}{THMhyporatdomainpoly}
	\label{thm:hyporat domain poly}
	Suppose $a$ is hyporational.
	If $\dom_n(a) = M_n(\C)^g$ for all $n$, then $a$ is a free polynomial.
\end{restatable*}

%\begin{restatable*}{corollary}{CORpbijqhyp}
%	\label{cor:bij hyp is poly}
%	If  $p:M(\C)^g\to M(\C)^g$ is a bijective free polynomial and 
%its inverse
%	$q$ is a free hyporational mapping,  then $q$ is a free polynomial 
%mapping.
%\end{restatable*}

In Section~\ref{sec:Hyporationals} we introduce the \dfn{hypo-Jacobian matrix}
$\hypj{p}$ of a free polynomial mapping $p$. It is a $g\times g$ matrix 
whose entries are bipartite polynomials; that is polynomials in 
the two $g$-tuples of freely noncommuting variables $\x$ and $\y$, but
where $x_j y_k = y_k x_j$ for all $1\le j,k\le g$. 
See 
Lemma~\ref{lem:hyp jac matrix}  and Definition \ref{def:hypo-Jacobian}.
%constructs the hypo-Jacobian matrix of a polynomial, a generalization of the Jacobian matrix.
Theorem~\ref{thm:matrix free inverse function theorem} shows that the
 hypo-Jacobian matrix of a free polynomial mapping is simply a matrix form of the 
free derivative; the hypo-Jacobian's invertibility as a matrix encodes 
the invertibility of free polynomials. Indeed, we obtain the following
improvement of Theorem~\ref{thm:PMI is poly}.

\begin{restatable*}{theorem}{THMmatfreeinverse}
 \label{thm:matrix free inverse function theorem}
  The free polynomial mapping $p$ is injective if and only if $\hypj{p}$
 has a multiplicative inverse whose entries are bipartite polynomials.
\end{restatable*}

In other words, a free polynomial mapping $p$ is injective if and only if its hypo-Jacobian matrix is invertible (as a bipartite
polynomial matrix).

The notion of the hypo-Jacobian matrix arises in the study of endomorphisms of the free associative algebra $\C\fax$.
In fact, any such endomorphism has a Jacobian matrix (see \cite{DL82} and \cite{Sch85}) that exactly corresponds with our notion of
the hypo-Jacobian matrix.

%Overall, these results extend certain regularity notions from \cite{KPV17} to the class of hyporationals.

% Theorem~\ref{thm:hyporat domain poly} - and say it extends results of 
%\cite{KPV17} Klep, Pascoe, Volcic. And its corollary. Theorem used
% in proofs of ?? which of results stated in this intro??

\subsection{Reader's guide}
Section~\ref{sec:preliminaries} introduces definitions and notation from formal power series and free analysis that
are repeatedly used throughout the paper.

The Jacobian matrix of a formal power series is defined in Section~\ref{sec:JacMats and FAF} and
	it serves as one of the central objects of study.
%The Jacobian matrix is appropriately named since it obeys the chain rule and 
Invertibility of the Jacobian matrix
	is necessary and sufficient for a formal power series mapping  to have a compositional inverse (Proposition~\ref{prop:comp inv}).
 In subsection \ref{ssec:AI and CI} we 
borrow  ideas from enumerative combinatorics - namely the construction of an algebraic
	formal power series by iterating the composition of a set of polynomials - and
 exploit the chain rule for the Jacobian matrix
  to iteratively construct these compositional inverses.
%a formal power series mapping's 	compositional  inverse.
%The construction of the compositional inverse borrows ideas from enumerative combinatorics, namely the construction of an algebraic
%	formal power series by iterating the composition of a set of polynomials.
Subsection \ref{ssec:FAF and local inv} extends results about Jacobian matrices to free analytic functions. 
 These results are then combined with  a noncommutative Nullstellensatz -- due to man \cite{HM04} -- 
  to prove the key intermediate result Theorem~\ref{thm:PMI is poly}: 
  if a free polynomial is injective,  then its Jacobian matrix has a polynomial matrix inverse.

Subsection \ref{ssec:Polynomial Criteria} explores conditions that guarantee a free polynomial has a free polynomial inverse.
While in subsection \ref{ssec:scions} we recall the free derivative as defined in \cite{Pas14}
 and investigate its properties.
For a fixed free polynomial $p$, we define the function $F:(\x,\y)\mapsto (Dp(\y)[\x],\y)$ and observe that Pascoe's solution \cite{Pas14} to 
	the free Jacobian conjecture can be interpreted as saying,  $p$ is bijective if and only if $F$ is bijective.
Setting $G$ equal to the (free) inverse of $F$, Lemma~\ref{lem:F poly inv} shows that there is a $\z$-affine linear 
	mapping $\bbG$ such that the first $g$ entries of $G$ are the solution to $\bbG$.
Understanding the inverse function $G$ is what motivates Section~\ref{sec:Hyporationals}.

Section~\ref{sec:vals and rats} seemingly departs from the previous discussion and establishes facts about
noncommutative rational functions and rational degree maps needed in the following section.
The main result of this section is Proposition~\ref{prop:r deg bdd}.
 It  shows that evaluating a nc rational function $r$ on matrices produces a matrix whose
entries behave much like the abelianization of $r$.

Section~\ref{sec:Hyporationals} introduces the hyporationals, a generalization of rational formal power series.
We proceed to show that	the free derivative of an injective polynomial has a hyporational inverse.
If $s$ is a hyporational series that is not rational, then we cannot apply results from realization theory.
%Proposition~\ref{prop:rational mat rep} precisely delineates the difference between rational series and hyporational series;
%	a formal power series $r$ is rational if and only if there exists a polynomial mapping $\mfa$ and a polynomial matrix $\mathscr{A}$
%	such that
%	\[
%		r(\x) = \left(\mfa(\x)\left(I-\mathscr{A}(\x)\right)^{-1}\right)_1.
%	\]
%Thus, if $s$ is a hyporational series that is not rational, then no such matrix and polynomial exist.
However, $s[n] = s\lvert_{M_n(\C)^g}$ is a commutative rational function for each $n$, hinting that it may be possible to extend 
	regularity results from nc rational functions to hyporational functions.
Proposition~\ref{prop:hyporat has hypomat} does so by constructing the \dfn{hypomatrix representation} of hyporational function; 
a matrix over $\C\fax\otimes \C\fralg{\bby}$, the algebra of bipartite polynomials, that imitates the realization theory of 
nc rational functions.
The algebra $\C\fax\otimes \C\fralg{\bby}$ is contained in a skew field $\C\skf{\bbx\lra\bby}$, and the hypomatrix representation
is invertible as a matrix over $\C\skf{\bbx\lra\bby}$.
Thus, we may use the results of Section~\ref{sec:vals and rats} to analyze hyporational functions.

By applying Proposition~\ref{prop:r deg bdd} we prove Theorem~\ref{thm:hyporat domain poly}: a hyporational function
	with no domain exceptions is in fact a polynomial, a result established in \cite{KlVo17} and \cite{KPV17} for 
  free rational functions. 
A straightforward consequence is Corollary~\ref{cor:F poly inv}. It  strengthens Pascoe's resolution of the  Free Jacobian Conjecture
  by asserting: 
	a free polynomial $p$ is injective if and only if $(\h,\x)\mapsto (Dp(\x)[\h],\x)$ has a polynomial inverse.
 This corollary is both an ingredient in, and  immediate consequence of, Theorem~\ref{thm:Free Groth Thm} assuming 
 bijectivity of $F(\x,\y)= (Dp(\y)[\x],\y)$. 

In subsection \ref{ssec:bij crit} we introduce $\hypj{p}$, the hypo-Jacobian matrix of the free polynomial $p$.
Using Corollary~\ref{cor:F poly inv} we prove Theorem~\ref{thm:matrix free inverse function theorem}:
a free polynomial is injective if and only if its hypo-Jacobian matrix is invertible as a matrix of bipartite polynomials.
Connecting Theorem~\ref{thm:matrix free inverse function theorem} to results in \cite{DL82} and \cite{Sch85}
proves the Free Grothendieck Theorem, Theorem~\ref{thm:Free Groth Thm}.

Lastly, in Section~\ref{sec:compute} we discuss computational aspects of computing the free inverse $q$ of a given free polynomial $p$.
If $q$ is a free polynomial then Theorem~\ref{thm:deg p bound} provides an upper bound for the degree of $q$ 
	depending only on the number of variables and the degree of $p$,
 leading to 
 an algorithmic test for whether $p$ has a polynomial inverse, Lemma~\ref{lem:mfq max bound}.

\subsection*{Acknowledgements} The author would like to thank his advisor Scott McCullough for reviewing the manuscript and for
the many valuable discussions along the way. The author would also like to thank Igor Klep, Jurij Vol{\v c}i{\v c}, James Pascoe
 and {\v S}pela {\v S}penko 
for many helpful conversations and for pointing out the existence of several algebraic results.

\section{Preliminaries}
	\label{sec:preliminaries}
Let $\mathcal{A}$ be any $\C$-algebra.
We denote the $n\times n$ matrix algebra with entries in $\mathcal{A}$ by $M_n(\mathcal{A})$.
Let $\bbx = \set{x_1,\dots,x_g}$ be a set of noncommuting indeterminates.
The set of finite sequences of elements of $\bbx$ is denoted by $\fralg{\bbx}$.
The empty sequence is the identity element of $\fralg{\bbx}$ and is denoted by $\ew$.

An element of $\bbx$ is called a \dfn{letter}\index{word!letter}, an element of $\fax$ is called a \dfn{word} and the
{\bf length}\index{word!length} of a word $w = x_{i_1}\dots x_{i_m}$ is $m$, denoted by $\abs{w}$.
We denote the \dfn{algebra of free formal power series} with coefficients in $\cA$ by $\cA\fpsx$ and 
if $f\in\cA\fpsx$ then
\[
	f = \sum_{w\in\fax} c_w w,
\]
where each $c_w\in\cA$.

We say $p\in \cA\fpsx$ is a \dfn{polynomial} if all but a finite number of the
coefficients of $p$ are zero.
The set of all polynomials, denoted $\cA\fax$, is the familiar free algebra on $g$ noncommuting indeterminates.
It is a subalgebra of $\cA\fpsx$.
We denote the formal power series with no constant term by $\cA\fpsx_+$ and the formal polynomials 
with no constant term by $\cA\fax_+$.

Suppose $\alpha = \sum_w a_w w$ and $\beta = \sum_w b_w w$.
Define $\omega: \cA\fpsx\times\cA\fpsx\to \N\cup\set{\infty}$ by
\[
	\omega(\alpha,\beta) = \inf\set{n\in\N:\exists w\in\fax,\abs{w}=n \text{ and } a_w\neq b_w}.
\]
The function $d:\cA\fpsx\times \cA\fpsx\to \R$ given by $d(\alpha,\beta) = 2^{-\omega(\alpha,\beta)}$
is a metric on $\cA\fpsx$.
Furthermore, $\cA\fpsx$ is complete and $\cA\fax$ is dense in $\cA\fpsx$.
The metric topology above is equivalent to the $(\bbx)$-adic topology.

Formal power series may be generalized further to free products of
unital $\C$-algebras.
An easy example of such a power series is a polynomial $p\in \C\fralg{\bbx\cup \bbz}$, which can instead be taken
as a polynomial in the free product of $\C\fralg{\bbx}$ with $\C\fralg{\bbz}$.
The free product of $\C\fralg{\bbx}$ and $\C\fralg{\bbz}$ is the set of all words $\alpha^1\beta_1\dots\alpha^k\beta_k$,
where $\alpha^i\in \fralg{\bbx}$, $\beta_i\in\fralg{\bbz}$ are nonempty words.
A much more detailed exposition can be found in \cite{Vol15}.

\begin{definition}
	\label{def:rational series}
	If $S\in \C\fpsx$ and $S$ has a nonzero constant term $\rho$, then $S^{-1}$ the multiplicative inverse of $S$,
	exists and is given by
	\[
		S^{-1} = \frac{1}{\rho}\sum_{n\geq 0} \left(1-\frac{S}{\rho}\right)^n.
	\]

	Let $\C_{\textup{rat}}\fpsx$ denote the \dfn{algebra of rational series}; the smallest subalgebra of 
	$\C\fpsx$ containing $\C\fax$ such that if $S\in \C_{\textup{rat}}\fpsx$ and $S^{-1}$ exists, 
	then $S^{-1}\in \C_{\textup{rat}}\fpsx$.
\end{definition}

%
%    ######  ##     ## ########   ######  ########  ######  ######## ####  #######  ##    ## 
%   ##    ## ##     ## ##     ## ##    ## ##       ##    ##    ##     ##  ##     ## ###   ## 
%   ##       ##     ## ##     ## ##       ##       ##          ##     ##  ##     ## ####  ## 
%    ######  ##     ## ########   ######  ######   ##          ##     ##  ##     ## ## ## ## 
%         ## ##     ## ##     ##       ## ##       ##          ##     ##  ##     ## ##  #### 
%   ##    ## ##     ## ##     ## ##    ## ##       ##    ##    ##     ##  ##     ## ##   ### 
%    ######   #######  ########   ######  ########  ######     ##    ####  #######  ##    ##
%
\subsection{Free analysis}
	\label{ssec:Free Bird}
We give basic definitions and a few results in free analysis that will be used throughout the paper.
	
A free polynomial is a noncommutative polynomial evaluated on tuples of matrices that preserves the structure of
free sets.
A \dfn{free set} $\Gamma = (\Gamma[n])_{n=1}^\infty\subset M(\C)^g = (M_n(\C)^g)_{n=1}^\infty$ 
is a graded set of tuples of matrices that is closed under direct sums and conjugation by similarities.
That is, if $X\in\Gamma[n]$, $Y\in \Gamma[m]$ and $S\in \textup{GL}_n(\C)$ then
\begin{enumerate}[label=(\roman*)]
	\item $X\oplus Y \in \Gamma[n+m]$;
	\item $S^{-1}XS\in \Gamma[n]$,
\end{enumerate}
where
\[
	X\oplus Y = (X_1,\dots,X_g)\oplus (Y_1,\dots,Y_g) = (X_1\oplus Y_1,\dots,X_g\oplus Y_g)
\]
and
\[
	S^{-1}XS = (S^{-1}X_1S,\dots,S^{-1}X_gS).
\]
%For positive integers $n$, let $M_n(\C)^g$ denote the set of $g$-tuples of $n\times n$ matrices with complex
%entries.
%Let $M(\C)^g$ denote the sequence $(M_n(\C)^g)_{n=1}^\infty$.
%It is quite natural to think of $M(\C)^g$ as the graded union of the $M_n(\C)^g$.
%More gnerally, a {\bf subset}\index{free set!subset} $U$ of $M(\C)^g$ is a sequence
%$(U[n])_{n=1}^\infty$ where $U[n]\subset M_n(\C)^g$.
%The subset $U$ is a \dfn{free set} if it is closed under direct sums and similarity, i.e. if $X\in U[k]$ and $Y\in U[\ell]$, then
%\[
%	X\oplus Y 
%	= \left(\bpm X_1 & 0 \\ 0 & Y_1 \epm,\dots, \bpm X_g & 0 \\ 0 & Y_g \epm\right)
%	\in U[k+\ell],
%\]
%and if $S\in M_k(\C)$ is invertible, then
%\[
%	S^{-1}XS = (S^{-1}X_1S,\dots, S^{-1}X_gS)\in U[k],
%\]
Let $U\subset M(\C)^g$ be a free set.
A \dfn{free map} (or \dfn{free function}) $f$ from $U$ into $M(\C)$ is a sequence of functions 
$f[n]:U[n]\to M_n(\C)$ that respects the free structure of $U$;
$f(X\oplus Y) = f(X)\oplus f(Y)$ and $f(S^{-1}XS) = S^{-1}f(X)S$.
The notion of a free map extends easily to vector-valued functions $f:U\to M(\C)^h$, 
matrix-valued functions $f:U\to M_d(M(\C)^g)$ and even operator-valued functions.
If $f_i:U\to M(\C)$ is a free map for $1\leq i\leq h$ then we say the tuple $f=(f_1,\dots,f_h)$ 
is a \dfn{free mapping} and write $f:U\to M(\C)^h$.

Suppose $U\subset M(\C)^g$ is a free set and each $U[n]$ is open (as a subset of $M_n(\C)^g$).
A free map $f:U\to M(\C)$ is \dfn{continuous} if each $f[n]$ is continuous, and is ({\bf free}) {\bf analytic} \index{free analytic}
if each $f[n]$ is analytic.
As shown in \cite{K-VV14}, a free function that is continuous is also free analytic (see also \cite{HKM11}).

As one would hope, there are indeed connections between free analytic functions and formal power series.
In fact, a formal power series with a positive radius of convergence determines a free analytic function and
with a small degree of local boundedness we get the converse (see \cite{HKM12}).

Given a positive integer $d$ let $f\in M_d(\C)^h\fpsx$, that is
\[
	f = \sum_{m=0}^\infty \sum_{\substack{w\in\fax \\ \abs{w}=m}} c_w w,
\]
where $c_w\in M_d(\C)^h$.
For $X\in M_n(\C)^g$ and $w = x_{i_1}\dots x_{i_k}\in \fralg{\bbx}$, we say $w(X) = X^w = X_{i_1}\dots X_{i_k}$.
We define the evaluation of $f$ at $X$ by
\[
	f(X) = \sum_{m=0}^\infty \sum_{\substack{w\in\fax \\ \abs{w}=m}} c_w\otimes X^w,
\]
provided this series converges.

\section{Jacobian matrices and free analytic functions}
	\label{sec:JacMats and FAF}

Fix $g\in \Z^+$ and set $\x = (x_1,\dots,x_g)\in \fax^g$, with $\x$ considered as a row vector.
For $h\in\Z^+$ and $1\leq i\leq h$, let $\phi_i\in\C\fpsx$ and $\phi = (\phi_1,\dots,\phi_h)\in (\C\fpsx)^h$.
Alternatively, we can view $\phi$ as an element of $\C^h\fpsx$, the set of formal power series with coefficients in $\C^h$.

Let $h,k\in\Z^+$.
Suppose $\bby = \set{y_1,\dots,y_h}$ and $\bbz = \set{z_1,\dots, z_k}$ are sets of freely noncommuting indeterminates
and suppose $\phi$ has no constant term, that is, $\phi\in(\C\fpsx_+)^h$.
In this case, we may view $\phi$ from a much more algebraic perspective;
$\phi:\C\fps{\bby}\to \C\fpsx$ is a continuous homomorphism defined by $y_i\mapsto \phi_i$.

%
%    ######  ##     ## ########   ######  ########  ######  ######## ####  #######  ##    ## 
%   ##    ## ##     ## ##     ## ##    ## ##       ##    ##    ##     ##  ##     ## ###   ## 
%   ##       ##     ## ##     ## ##       ##       ##          ##     ##  ##     ## ####  ## 
%    ######  ##     ## ########   ######  ######   ##          ##     ##  ##     ## ## ## ## 
%         ## ##     ## ##     ##       ## ##       ##          ##     ##  ##     ## ##  #### 
%   ##    ## ##     ## ##     ## ##    ## ##       ##    ##    ##     ##  ##     ## ##   ### 
%    ######   #######  ########   ######  ########  ######     ##    ####  #######  ##    ##
%
\subsection{The Jacobian matrix of a formal power series}
	\label{ssec:jac mat of FPS}
	
We define the (left) noncommutative Jacobian matrix of a formal power series, a central object of study throughout the paper.
A treatment of the noncommutative Jacobian matrix can be found in \cite{Reu92}.

\begin{definition}
	\label{def:Jacobian matrix}
	Let $f\in \C\fpsx$ with $f = \sum_{w\in\fax} f_w w$.
	For $1\leq i\leq g$, define $S_{x_i}^*:\C\fpsx\to\C\fpsx$ by
	\[
		S_{x_i}^*f = \sum_{w\in\fax} f_{x_i w} w.
	\]
	In other words, $S_{x_i}^*$ is the adjoint of the operator of left multiplication by $x_i$.
	
	Let $h\in\mathbb{Z}^+$ and take $\phi\in (\C\fpsx)^h$, seen as a row vector of formal power series.
	The $g\times h$ matrix over $\C\fpsx$ defined by
	\[
		J_\phi = (S_{x_j}^* \phi_i)_{i,j=1}^{g,h}
	\]
	is the {\bf (left)} \dfn{Jacobian matrix} of $\phi$. 
	In particular, if $\phi$ has constant term $\phi_\ew = (\phi_\ew^1\ew,\dots, \phi_\ew^h\ew)$, then
	\[
		\phi = \phi_\ew + \x J_\phi,
	\]
	where $\x J_\phi$ is the standard product of a row vector and a matrix.
	This representation of $\phi$ is unique.
\end{definition}

\begin{remark}
	Let $\phi\in (\C\fpsx_+)^h$ and define the homomorphism $\alpha:\C\fps{\bby}\to\C\fpsx$ by $\alpha(y_i) = \phi_i$.
	Defining $J_\alpha = J_\phi$ yields the Jacobian matrix encountered in \cite{Reu92}.
\end{remark}

It is evident that every formal power series has a Jacobian matrix and if $\alpha,\beta\in(\C\fpsx)^h$
have the same Jacobian matrix then $\alpha-\beta\in \C^h$.

\begin{remark}
	If $\phi:\C\fps{\bby}\to\C\fpsx$ and $\psi:\C\fps{\bbz}\to\C\fps{\bby}$ are continuous homomorphisms, then
	certainly $\phi\circ \psi : \C\fps{\bbz}\to\C\fpsx$ is a continuous homomorphism.
	As tuples of formal power series this says that $\psi(\phi(\x))\in (\C\fpsx_+)^k$.
	This aligns with the fact that $\psi(\phi(\x))$ is defined as long as $\phi$ has a zero constant term.
	
	For any $A\in M_{k\times n}(\C\fps{\bby})$ and $\phi\in (\C\fpsx)^h$, $A = A(\y)$ and $A(\phi(\x)) \in M_{k\times n}(\C\fpsx)$,
	where $A(\phi(\x))$ is the result of applying the homomorphism $y_i\mapsto \phi_i$ to each entry of $A$.
	Thus, if $B\in M_{n\times m}(\C\fps{\bby})$ then $A(\phi(\x))B(\phi(\x)) = AB(\phi(\x))\in M_{k\times m}(\C\fpsx)$.
	In particular, if $C\in M_n(\C\fps{\bby})$ is an invertible matrix then $C^{-1} = C(\y)^{-1}$ and 
	$C^{-1}(\phi(\x)) = C(\phi(\x))^{-1}$.
\end{remark}

\begin{proposition}
	\label{prop:Jacobian chain rule}
	If $\phi\in (\C\fpsx_+)^h$ and $\psi\in (\C\fps{\bby}_+)^k$ then
	\[
		J_{\psi\circ \phi}(\x) = J_{\phi}(\x)J_{\psi}(\phi(\x))\in M_{g\times k}(\C\fpsx).
	\]
\end{proposition}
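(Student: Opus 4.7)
The plan is to exploit the unique decomposition $f = f_\ew + \x J_f$ established for row vectors of formal power series, together with the fact that composition $\psi \circ \phi$ makes sense since $\phi$ has no constant term.

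First, I would apply the definition to $\psi$ viewed as a row vector in $(\C\fps{\bby})^k$: we have the identity
\[
\psi(\y) \;=\; \psi_\ew + \y\, J_\psi(\y)
\]
in $(\C\fps{\bby})^k$. Since $\phi \in (\C\fpsx_+)^h$, substitution of $\y \mapsto \phi(\x)$ is the continuous homomorphism $\C\fps{\bby}\to \C\fpsx$ noted earlier, and it commutes with matrix multiplication (per the preceding remark). Substituting therefore yields the identity
\[
(\psi\circ\phi)(\x) \;=\; \psi_\ew + \phi(\x)\, J_\psi(\phi(\x))
\]
in $(\C\fpsx)^k$.

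Next, because $\phi$ itself has no constant term, its own decomposition reads $\phi(\x) = \x\, J_\phi(\x)$. Plugging this in and using associativity of matrix multiplication over $\C\fpsx$ gives
\[
(\psi\circ\phi)(\x) \;=\; \psi_\ew + \x\, J_\phi(\x)\, J_\psi(\phi(\x)).
\]
On the other hand, applying the definition of the Jacobian directly to $\psi\circ\phi \in (\C\fpsx)^k$ produces the decomposition
\[
(\psi\circ\phi)(\x) \;=\; (\psi\circ\phi)_\ew + \x\, J_{\psi\circ\phi}(\x),
\]
with $(\psi\circ\phi)_\ew = \psi_\ew$ since $\phi$ has no constant term. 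By the uniqueness of the decomposition $f = f_\ew + \x J_f$ (noted after Definition~\ref{def:Jacobian matrix}), matching the two expressions yields
\[
J_{\psi\circ\phi}(\x) \;=\; J_\phi(\x)\, J_\psi(\phi(\x)),
\]
which is the claimed chain rule.

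The only subtlety is ensuring that substitution of $\phi$ into the matrix identity $\psi = \psi_\ew + \y J_\psi$ really commutes with matrix multiplication in the sense that $(\y J_\psi)(\phi(\x)) = \phi(\x)\, J_\psi(\phi(\x))$; but this is exactly the content of the remark preceding the proposition, which confirms that evaluation of an entrywise substitution respects matrix products. No additional machinery should be required, and the dimensions match since $J_\phi$ is $g \times h$ and $J_\psi(\phi(\x))$ is $h \times k$, producing the $g \times k$ matrix $J_{\psi\circ\phi}$.
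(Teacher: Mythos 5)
Your argument is correct. Note, though, that the paper's own ``proof'' of this proposition is essentially a citation: it recasts $\phi$ and $\psi$ as continuous homomorphisms $\alpha:\C\fps{\bby}\to\C\fpsx$ and $\beta:\C\fps{\bbz}\to\C\fps{\bby}$ and then invokes Proposition~1.2 of \cite{Reu92}. You instead give a self-contained verification: substitute $\y\mapsto\phi(\x)$ into the decomposition $\psi=\psi_\ew+\y\,J_\psi(\y)$ (legitimate because $\phi$ has no constant term and the substitution homomorphism respects matrix products, as recorded in the remark preceding the proposition), replace $\phi(\x)$ by $\x\,J_\phi(\x)$, and conclude by the uniqueness of the representation $f=f_\ew+\x J_f$. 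All steps check out, including the dimension count and the observation that the constant terms match (here both are zero since $\psi$ and $\phi$ lie in the augmentation ideal). What your route buys is independence from the external reference --- it is in effect the proof that underlies the cited result --- at the cost of a few more lines; the paper's route buys brevity by outsourcing exactly this computation.
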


\begin{proof}
	Observe $\psi\circ \phi\in (\C\fpsx)^k$.
	Define $\alpha:\C\fps{\bby}\to \C\fpsx$ and $\beta:\C\fps{\bbz}\to\C\fps{\bby}$ by
	$\alpha(y_i) = \phi_i$ and $\beta(z_j) = \psi_j$.
	Thus, $\alpha\circ \beta: \C\fps{\bbz}\to \C\fpsx$ with $\alpha\circ \beta( z_i)= (\psi\circ \phi)_i$.
	By Proposition~1.2 in \cite{Reu92}, $J_{\psi\circ \phi} = J_\phi(\x) J_\psi(\phi(\x)) \in M_{g\times k}(\C\fpsx)$.
\end{proof}

%\begin{restatable*}{corollary}{CORinvmatreps}
\begin{corollary}
		\label{cor:inv mat reps}
		Suppose $p,q\in (\C\fpsx_+)^g$ have Jacobian matrices $J_p$ and $J_q$, respectively.
		The series $p$ and $q$ are compositional inverses if and only if $J_p(q(\x)) = J_q(\x)^{-1}$
		and $J_q(p(\x)) = J_p(\x)^{-1}$.
%	\end{restatable*}
\end{corollary}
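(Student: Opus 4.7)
The plan is to apply the chain rule (Proposition~\ref{prop:Jacobian chain rule}) in both directions, exploiting the fact that the identity mapping $\x$ has Jacobian $I_g$ and zero constant term, together with the uniqueness of the decomposition $\phi = \phi_\ew + \x J_\phi$ described in Definition~\ref{def:Jacobian matrix}.

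For the forward direction, if $p$ and $q$ are compositional inverses, then applying the chain rule to $p\circ q = \x$ and to $q\circ p = \x$ yields $J_q(\x)\,J_p(q(\x)) = I_g$ and $J_p(\x)\,J_q(p(\x)) = I_g$. These identities only say that $J_p(q(\x))$ is a \emph{right} inverse of $J_q(\x)$ (and similarly for the other pair). To upgrade to a genuine two-sided matrix inverse over the noncommutative ring $\C\fpsx$, I would substitute $\x \mapsto q(\x)$ into the second identity --- legitimate because $q$ has no constant term --- and then use $p(q(\x)) = \x$ to obtain $J_p(q(\x))\,J_q(\x) = I_g$. Combined with the first identity, this gives $J_p(q(\x)) = J_q(\x)^{-1}$, and the relation $J_q(p(\x)) = J_p(\x)^{-1}$ follows by symmetry.

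For the converse, assume both Jacobian identities hold. The chain rule then gives $J_{p\circ q}(\x) = J_q(\x)\,J_p(q(\x)) = I_g = J_{\x}(\x)$ and, analogously, $J_{q\circ p}(\x) = I_g = J_\x(\x)$. Since $p,q\in (\C\fpsx_+)^g$, the compositions $p\circ q$ and $q\circ p$ have zero constant term, matching that of $\x$. Uniqueness of the representation $\phi = \phi_\ew + \x J_\phi$ then forces $p\circ q = \x = q\circ p$.

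The only genuine subtlety is in the forward direction: a one-sided matrix inverse over the noncommutative ring $\C\fpsx$ does not, a priori, promote to a two-sided inverse, so a little care is required to extract the stated equalities from $p\circ q = \x$ and $q\circ p = \x$. The substitution $\x\mapsto q(\x)$ bypasses this cleanly. Everything else reduces to routine bookkeeping with the chain rule.
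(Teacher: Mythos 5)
Your proof is correct and follows essentially the same route as the paper: chain rule applied to $p\circ q=\x$ and $q\circ p=\x$ in one direction, and the unique representation $\phi=\phi_\ew+\x J_\phi$ in the other. The one place you go beyond the paper is in explicitly upgrading the one-sided identities $J_q(\x)J_p(q(\x))=I_g$ and $J_p(\x)J_q(p(\x))=I_g$ to two-sided inverses via the substitution $\x\mapsto q(\x)$; the paper asserts this step without comment, so your added care is a small but genuine improvement rather than a different approach.
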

	
%\CORinvmatreps

\begin{proof}
	Suppose $p$ and $q$ are compositional inverses.
	Hence $p(q(\x)) = \x$ and $q(p(\x)) = \x$.
	Applying Proposition~\ref{prop:Jacobian chain rule},
	\[
		J_q(\x)J_p(q(\x)) = J_{p\circ q}(\x) = I_g
	\]
	and
	\[
		J_p(\x)J_q(p(\x)) = J_{q\circ p}(\x) = I_g.
	\]
	Thus $J_p(q(\x)) = J_q(\x)^{-1}$ and $J_q(p(\x)) = J_p(\x)^{-1}$.
	
	Now suppose $J_p(q(\x)) = J_q(\x)^{-1}$ and $J_q(p(\x)) = J_p(\x)^{-1}$.
	Observe
	\[
		p(q(\x)) = \x J_{p\circ q}(\x) = \x J_q(\x) J_p(q(\x)) = \x I_g = \x,
	\]
	and
	\[
		q(p(\x)) = \x J_{q\circ p}(\x) = \x J_p(\x) J_q(p(\x)) = \x I_g = \x.
	\]
	Therefore, $p$ and $q$ are compositional inverses.
\end{proof}

The invertibility of the Jacobian matrix is reminiscent of the inverse function theorem.
Indeed, if $p\in (\C\fpsx_+)^g$ is a free function, then $p$ is locally invertible at $0$
if and only if $J_p$ is invertible at $0$.

It should be noted that $J_{\alpha^{-1}}(\x) = J_{\alpha}^{-1}(\alpha^{-1}(\x))$, hence we cannot use $J_\alpha$
to directly compute $\alpha^{-1}$ without already knowing the explicit form of $\alpha^{-1}$.
However, $J_\alpha^{-1}$ is a local approximation of $J_{\alpha^{-1}}$, implying we may be able to construct $\alpha^{-1}$
from successive approximations.
This leads us directly to subsection \ref{ssec:AI and CI}.

%
%    ######  ##     ## ########   ######  ########  ######  ######## ####  #######  ##    ## 
%   ##    ## ##     ## ##     ## ##    ## ##       ##    ##    ##     ##  ##     ## ###   ## 
%   ##       ##     ## ##     ## ##       ##       ##          ##     ##  ##     ## ####  ## 
%    ######  ##     ## ########   ######  ######   ##          ##     ##  ##     ## ## ## ## 
%         ## ##     ## ##     ##       ## ##       ##          ##     ##  ##     ## ##  #### 
%   ##    ## ##     ## ##     ## ##    ## ##       ##    ##    ##     ##  ##     ## ##   ### 
%    ######   #######  ########   ######  ########  ######     ##    ####  #######  ##    ##
%
\subsection{Auxiliary inverses and compositional inverses}
	\label{ssec:AI and CI}
%Corollary~\ref{cor:inv mat reps} tells us that the automorphisms of $\C\fpsx$ are exactly those with invertible Jacobian matrices.
The main result in this subsection, Proposition~\ref{prop:comp inv}, tells us that if $p\in (\C\fpsx)^g$, such that $J_p$ is invertible, 
then $p^{-1}$ is the limit of a sequence of polynomials constructed from $J_p^{-1}$.

\begin{definition}
	\label{def:num of z terms}
	Suppose $h\in\Z^+$ and $\bbz = \set{z_1,\dots,z_h}$ is a set of freely noncommuting indeterminates.
	For any $w\in\fralg{\bbx\cup\bbz}$ define $\abs{w}_z$ to be the number of $z$-terms appearing in $w$ and define
	$\abs{w}_x$ to be the number of $x$-terms appearing in $w$.
	In particular, $\abs{w} = \abs{w}_x+\abs{w}_z$.
	
	Let $\ell\in \Z^+$ and $\alpha\in \C^\ell\fps{\bbx\cup\bbz}_+\cong (\C\fps{\bbx\cup\bbz}_+)^\ell$ with 
	$\alpha = \sum_{w\in \fralg{\bbx\cup\bbz}} a_w w$.
	Define
	\begin{equation}
		\label{eq:bbd}
		\bbd_z(\alpha) = \inf\set{\abs{w}:\abs{w}_z>0 \text{ and } a_w\neq 0}.
	\end{equation}
	Note if $\alpha$ has no $z$-terms then $\bbd_z(\alpha) = \infty$.
\end{definition}

We will consistently write a formal power series $\alpha\in(\C\fps{\bbx\cup\bbz})^\ell$ as $\alpha(\x)[\z]$ rather than
$\alpha(\x,\z)$.
This convention is simply a preference based on aligning our notation with the notation we use for free derivatives.
Thus, if $\beta\in(\C\fps{\bbx\cup\bbz}_+)^h$ then $\alpha(\x)[\beta(\x)[\z]]$ written another way is $\alpha(\x,\beta(\x,\z))$.

\begin{lemma}
	\label{lem:bbd props}
	Let $\alpha,\beta\in \C^\ell\fps{\bbx\cup\bbz}_+$ and $\gamma\in\C^h\fps{\bbx\cup\bbz}$ with 
	$\alpha = \sum_{w\in \fralg{\bbx\cup\bbz}} a_w w$, $\beta = \sum_{w\in \fralg{\bbx\cup\bbz}} b_w w$,
	and $\gamma = \sum_{w\in \fralg{\bbx\cup\bbz}} c_w w$.
	We have the following,
	\begin{enumerate}[label=(\roman*)]
		\item \label{it:bbd props item 1} $\bbd_z(\alpha + \beta) \geq \min\set{\bbd_z(\alpha),\bbd_z(\beta)}$;
		\item \label{it:bbd props item 2} $\bbd_z(\alpha\beta)\geq \min\set{\bbd_z(\alpha),\bbd_z(\beta)}$ and in particular, 
		if $\bbd_z(\alpha)$ and $\bbd_z(\beta)$ are not both infinite, then
		$\bbd_z(\alpha\beta) > \min\set{\bbd_z(\alpha),\bbd_z(\beta)}$;
		\item \label{it:bbd props item 3} if $\bbd_z(\alpha)>1$ and $\bbd_z(\gamma)<\infty$ then $\bbd_z(\gamma)<\bbd_z(\alpha(\x)[\gamma(\x)[\z]])$.
	\end{enumerate}
\end{lemma}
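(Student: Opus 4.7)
The plan is to handle the three items separately, working directly from the definition of $\bbd_z$ and the coefficient-level description of the operations involved. For (i), I would simply observe that the coefficient of a word $w$ in $\alpha + \beta$ is $a_w + b_w$; if this is nonzero with $|w|_z > 0$, then at least one of $a_w, b_w$ is nonzero, forcing $|w| \geq \bbd_z(\alpha)$ or $|w| \geq \bbd_z(\beta)$, and the claim follows by taking the infimum.

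For (ii), I would factor any word $w$ contributing a nonzero coefficient to $\alpha\beta$ as $w = uv$ with $a_u b_v \neq 0$. Since $\alpha,\beta \in \C^\ell\fps{\bbx\cup\bbz}_+$ have no constant term, both $u$ and $v$ are nonempty. If $|w|_z > 0$, then at least one of $|u|_z, |v|_z$ is positive; say $|u|_z > 0$, giving $|u| \geq \bbd_z(\alpha)$ and $|v| \geq 1$, hence $|w| \geq \bbd_z(\alpha) + 1 > \bbd_z(\alpha)$. The symmetric case handles $|v|_z > 0$, and when both $\bbd_z$'s are infinite the product $\alpha\beta$ trivially has no $z$-terms, so the weak inequality degenerates to $\infty \geq \infty$.

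For (iii), I would decompose each word $w$ of $\alpha$ with $|w|_z = k \geq 1$ as $w = u_0 z_{i_1} u_1 \cdots z_{i_k} u_k$ with each $u_j \in \fralg{\bbx}$. Substitution yields
\[
w(\x, \gamma(\x)[\z]) = u_0\,\gamma_{i_1}\,u_1\,\gamma_{i_2}\cdots \gamma_{i_k}\,u_k,
\]
and expanding each $\gamma_{i_j}$ as a sum of its coefficient words produces terms of the form $u_0 v_1 u_1 \cdots v_k u_k$. Any such term with positive $z$-degree must have $|v_j|_z > 0$ for some $j$, so $|v_j| \geq \bbd_z(\gamma) = d$; every remaining $v_{j'}$ is nonempty (as required for the substitution $\alpha(\x)[\gamma]$ to be defined as a formal power series), contributing $|v_{j'}| \geq 1$. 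Summing, the total length is at least $\sum_j |u_j| + d + (k-1) = |w| + d - 1 \geq \bbd_z(\alpha) + d - 1 > d$, with the strict inequality provided by the hypothesis $\bbd_z(\alpha) > 1$.

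The main obstacle is the careful accounting in (iii): one must track the internal structure of $w$ alongside the lengths of the substituted $v_j$'s, and the strict hypothesis $\bbd_z(\alpha) > 1$ is precisely what converts the naive estimate $|v| \geq d$ (which one gets from a single $v_j$ of length $\geq d$ together with empty $u_j$'s and $v_{j'}$'s) into the strict inequality $|v| > d$ by supplying the extra unit of length either via $k \geq 2$ or via some nonempty $u_j$.
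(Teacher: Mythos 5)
Your proof is correct and follows essentially the same route as the paper: items (i) and (ii) are handled identically, and your item (iii) carries out directly the word-length count (an extra unit of length supplied by $\bbd_z(\alpha)\geq 2$) that the paper obtains by invoking items (i) and (ii) on the factorization of $w(\x)[\gamma(\x)[\z]]$ into $x_i$'s and $\gamma_i$'s. The only caveat, shared with the paper, is the implicit assumption that $\gamma$ has no constant term so that the substitution is well defined.
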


\begin{proof}
	Since $a_w+b_w$ is nonzero only if at least one of $a_w$ or $b_w$ is nonzero, we automatically have
	$\bbd_z(\alpha + \beta) \geq \min\set{\bbd_z(\alpha),\bbd_z(\beta)}$.
	Thus we have proven $\ref*{it:bbd props item 1}$.
	
	To prove \ref*{it:bbd props item 2}, first suppose $\bbd_z(\alpha),\bbd_z(\beta) = \infty$, i.e. neither has a $z$-term.
	It follows that their product, $\alpha\beta$ has no $z$-terms and thus $\bbd_z(\alpha\beta)=\infty$.
	
	Now suppose $\bbd_z(\alpha)$ is finite.
	If $\bbd_z(\alpha\beta)$ is infinite then $\bbd_z(\alpha\beta)>\bbd_z(\alpha)\geq \min\set{\bbd_z(\alpha),\bbd_z(\beta)}$ and we are done.
	Finally, suppose $\bbd_z(\alpha\beta)<\infty$ and
	let $\alpha\beta = \sum_{w\in \fralg{\bbx\cup\bbz}} c_w w$.
	Let $w$ be a monomial with $\abs{w}_z>0$, $\abs{w}=\bbd_z(\alpha\beta)$ and $c_w\neq 0$.
	There exist monomials $u,v$ such that $a_u b_v \neq 0$ and $uv = w$.
	Recall $\alpha$ and $\beta$ have no constant terms so $\abs{u},\abs{v}>0$.
	Since $\abs{w}_z>0$ we may assume $\abs{u}_z>0$, hence $\abs{u}\geq \bbd_z(\alpha)$ and
	\begin{align*}
		\bbd_z(\alpha\beta) &= \abs{w} = \abs{u}+\abs{v}\geq \bbd_z(\alpha)+\abs{v} 
			> \bbd_z(\alpha)\geq \min\set{\bbd_z(\alpha),\bbd_z(\beta)}.
	\end{align*}
	Thus, $\bbd_z(\alpha\beta)>\min\set{\bbd_z(\alpha),\bbd_z(\beta)}$ and item (ii) is done.
		
	To prove item \ref*{it:bbd props item 3}, suppose $\bbd_z(\alpha)>1$ and $\bbd_z(\gamma)<\infty$.
	Set 
	\[
		W = \set{w\in\fralg{\bbx\cup\bbz}: \abs{w}_z>0, a_w\neq 0}
	\]
	and note if $W$ is empty then 
	$\bbd_z(\alpha(\x)[\gamma(\x)[\z]]) = \infty>\bbd_z(\gamma)$.
	Suppose $W$ is nonempty.
	Writing
	\[
		\alpha(\x)[\gamma(\x)] = \sum_{w\in \fralg{\bbx\cup\bbz}} a_w w(\x)[\gamma(\x)[\z]]
	\]
	and applying item \ref*{it:bbd props item 1} yields
	\[
		\bbd_z(\alpha(\x)[\gamma(\x)[\z]]) \geq \min\set{\bbd_z(a_w w(\x)[\gamma(\x)[\z]]):w\in W}.
	\]
	Suppose there is a $w\in W$ such that $\bbd_z(a_w w(\x)[\gamma(\x)[\z]])<\infty$ 
	(if not, then $\bbd_z(\alpha(\x)[\gamma(\x)[\z]]) = \infty$ and we are done).
	We know $\abs{w}\geq 2$ since $w\in W$ and $\bbd_z(\alpha)\geq 2$.
	Hence $w(\x)[\gamma(\x)[\z]]$ is a finite product of $x_i$ and $\gamma_i(\x)[\z]$ terms.
	Thus, for $w\in W$,
		$\bbd_z(a_w w(\x)[\gamma(\x)[\z]]) > \bbd_z(\gamma)$,
	and therefore $\bbd_z(\alpha(\x)[\gamma(\x)[\z]]) > \bbd_z(\gamma)$.
\end{proof}

\begin{lemma}
	\label{lem:bba n+m}
	Suppose $\bba\in(\C\fps{\bbx\cup\bbz}_+)^h$ and define
	\[
		\bba^{\circ k}(\x)[\z] = \bba(\x)[\bba^{\circ k-1}(\x)[\z]]
	\]
	for $k>1$ and $\bba^{\circ 1}(\x)[\z] = \bba(\x)[\z]$.
	If $n,m\in\Z^+$ then
	\[
		\bba^{\circ n}(\x)[\bba^{\circ m}(\x)[\z]]=\bba^{\circ m}(\x)[\bba^{\circ n}(\x)[\z]]
		=\bba^{\circ n+m}(\x)[\z].
	\]
\end{lemma}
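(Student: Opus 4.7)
The plan is to prove both equalities by establishing the single identity
\[
\bba^{\circ n}(\x)[\bba^{\circ m}(\x)[\z]] = \bba^{\circ n+m}(\x)[\z],
\]
from which the symmetric statement $\bba^{\circ m}(\x)[\bba^{\circ n}(\x)[\z]]=\bba^{\circ n+m}(\x)[\z]$ follows by swapping the roles of $n$ and $m$, forcing the two iterated substitutions to coincide.

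I would fix $m\in\Z^+$ and induct on $n$. The base case $n=1$ is immediate from the recursive definition: by construction,
\[
\bba^{\circ 1}(\x)[\bba^{\circ m}(\x)[\z]] = \bba(\x)[\bba^{\circ m}(\x)[\z]] = \bba^{\circ m+1}(\x)[\z].
\]
For the inductive step, assume $\bba^{\circ n}(\x)[\bba^{\circ m}(\x)[\z]] = \bba^{\circ n+m}(\x)[\z]$. The key manipulation is to observe that the substitution $[\bba^{\circ m}(\x)[\z]]$ sends the tuple $\z$ to an element of $(\C\fps{\bbx\cup\bbz}_+)^h$, so by viewing each $\bba^{\circ k}$ as a continuous $\C\fps{\bbx}$-algebra homomorphism $\C\fps{\bbx\cup\bbz}\to\C\fps{\bbx\cup\bbz}$ (sending $z_i\mapsto (\bba^{\circ k})_i$ and fixing $\bbx$), the bracket operation $[\,\cdot\,]$ becomes literal composition of homomorphisms. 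Composition of homomorphisms is associative, so
\[
\bba^{\circ n+1}(\x)[\bba^{\circ m}(\x)[\z]]
= \bba(\x)\bigl[\bba^{\circ n}(\x)[\bba^{\circ m}(\x)[\z]]\bigr]
= \bba(\x)\bigl[\bba^{\circ n+m}(\x)[\z]\bigr]
= \bba^{\circ n+m+1}(\x)[\z],
\]
completing the induction.

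The only point requiring care is the associativity step, which is why I would frame the iterates as continuous homomorphisms. This reduces the identity to the trivial associativity of map composition in the category of continuous $\C$-algebra endomorphisms of $\C\fps{\bbx\cup\bbz}$ fixing $\bbx$. No convergence issue arises because each $\bba^{\circ k}$ has no constant term (an easy induction using that $\bba\in(\C\fps{\bbx\cup\bbz}_+)^h$), so every substitution is well-defined in the $(\bbx\cup\bbz)$-adic topology. The commutativity conclusion $\bba^{\circ n}(\x)[\bba^{\circ m}(\x)[\z]]=\bba^{\circ m}(\x)[\bba^{\circ n}(\x)[\z]]$ then drops out for free since both sides equal $\bba^{\circ n+m}(\x)[\z]$.

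I do not anticipate a genuine obstacle here; the lemma is essentially the semigroup law for iteration. The main thing to write carefully is the reinterpretation of $[\,\cdot\,]$ as homomorphism composition, so that associativity can be invoked without ambiguity about which slot is being substituted into.
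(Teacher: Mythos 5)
Your proof is correct and follows essentially the same route as the paper: an induction on the iteration count whose engine is the associativity of substitution (which the paper uses implicitly when it substitutes $\bba(\x)[\z]$ into its identity \eqref{eq:bba n+1}, and which you make explicit via the homomorphism framing). The only cosmetic difference is that the paper first establishes the $m=1$ case in both orders and then iterates it, whereas you induct directly on $n$ for fixed $m$ and obtain the commutativity statement for free from the symmetry of $n+m$.
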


\begin{proof}
	We first prove
	\begin{equation}
		\label{eq:bba n+1}
		\bba^{\circ n+1}(\x)[\z] = \bba(\x)[\bba^{\circ n}(\x)[\z]] = \bba^{\circ n}(\x)[\bba(\x)[\z]]
	\end{equation}
	via induction.
	The base case is from the definition, so suppose \eqref{eq:bba n+1} holds for $n$ and consider
	\begin{align*}
		\bba^{\circ n+2}(\x)[\z] &= \bba(\x)[\bba^{\circ n+1}(\x)[\z]] = \bba(\x)[\bba^{\circ n}(\x)[\bba(\x)[\z]]]\\
		&= \bba^{\circ n+1}(\x)[\bba(\x)[\z]].
	\end{align*}
	Thus \eqref{eq:bba n+1} holds in general.
	
	Now, take any $n,m\in\Z^+$ and consider $\bba^{\circ n}(\x)[\bba^{\circ m}(\x)[\z]]$.
	Applying \eqref{eq:bba n+1} $m$ times yields
	$
	\bba^{\circ n}(\x)[\bba^{\circ m}(\x)[\z]]=\bba^{\circ n+m}(\x)[\z],
	$
	while applying \eqref{eq:bba n+1} $n$ times gives
	$
	\bba^{\circ n+m}(\x)[\z] = \bba^{\circ m}(\x)[\bba^{\circ n}(\x)[\z]].
	$
\end{proof}

Suppose $\bba\in (\C\fps{\bbx\cup\bbz}_+)^h$.
For each $k\geq1$ set $\bbd^k_a = \bbd_z(\bba^{\circ k})$ and $\bba^{\circ k}(\x)[\z] = \sum_{w\in\langle{\bbx\cup\bbz}\rangle} c_w^k w$.
We define
\begin{equation}
	\label{eq:bba k form}
	\mfa^k(\x) = \sum_{\substack{w\in\langle\bbx\rangle\\ \abs{w}<\bbd^k_a}} c_w^k w \quad \text{ and } \quad
	\alpha^k(\x)[\z] = \sum_{\substack{w\in\langle\bbx\cup\bbz\rangle\\ \abs{w}\geq\bbd^k_a}} c_w^k w,
\end{equation}
and observe 
\begin{equation}
	\label{eq:bba k affine form}
	\bba^{\circ k}(\x)[\z] = \mfa^k(\x) + \alpha^k(\x)[\z].
\end{equation}

\begin{lemma}
	\label{lem:ak and bbd}
	Suppose $\bba\in(\C\fps{\bbx\cup\bbz}_+)^h$ and $\bbd_z(\bba)>1$.
	The sequences $(\mfa^k)$ and $\bbd^k_a$ have the following properties.
	\begin{enumerate}[label=(\roman*)]
		\item \label{it::ak and bbd item 1} $\bbd^k_a$ is either strictly increasing with $k$, or there is an $N$ such that if $k<N$ then
		$\bbd^{k}_a< \bbd^{k+1}_a$ and if $k\geq N$ then $\bbd^k_a = \infty$;
		\item \label{it::ak and bbd item 2} $\bbd^k_a>k$ for all $k$;
		\item \label{it::ak and bbd item 3} If $n\geq k$ then the coefficients of $\bba^{\circ k}$ and $\bba^{\circ n}$
		agree on monomials of length less than $\bbd^k_a$ and, in particular, the coefficients of $\mfa^k$ and $\mfa^n$ 
		agree on monomials of length less than $\bbd^k_a$;
		\item \label{it::ak and bbd item 4} $(\mfa^k)$ is a convergent sequence (in the topology of $(\C\fpsx)^g$) and letting $a = \lim_{k\to\infty}\mfa^k$
		we have $a(\x) = \bba(\x)[a(\x)]$.
		Moreover, $a$ is the unique function such that $a(\x) = \bba(\x)[a(\x)]$.
	\end{enumerate}
\end{lemma}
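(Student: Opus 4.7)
My plan is to prove the four items in order, with each subsequent item leaning on the previous.

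For \ref*{it::ak and bbd item 1}, the dichotomy is driven by Lemma~\ref{lem:bbd props}\ref*{it:bbd props item 3}. I would argue that if $\bba^{\circ k}$ has no $z$-terms (i.e., $\bbd_z(\bba^{\circ k}) = \infty$), then $\bba^{\circ k+1}(\x)[\z] = \bba(\x)[\bba^{\circ k}(\x)]$ also has no $z$-terms, so once $\bbd^k_a$ hits $\infty$ it stays $\infty$. Otherwise $\bbd_z(\bba^{\circ k}) < \infty$, and applying Lemma~\ref{lem:bbd props}\ref*{it:bbd props item 3} with $\alpha = \bba$ (using $\bbd_z(\bba) > 1$) and $\gamma = \bba^{\circ k}$ yields $\bbd^k_a < \bbd^{k+1}_a$. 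For \ref*{it::ak and bbd item 2}, an easy induction: $\bbd^1_a \ge 2 > 1$, and the strict increase from \ref*{it::ak and bbd item 1} (with the infinite case trivially satisfying the bound) gives $\bbd^{k+1}_a > \bbd^k_a > k$.

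For \ref*{it::ak and bbd item 3}, I would use Lemma~\ref{lem:bba n+m} to write $\bba^{\circ n}(\x)[\z] = \bba^{\circ k}(\x)[\bba^{\circ n-k}(\x)[\z]]$, then expand using the decomposition \eqref{eq:bba k affine form}. Writing $\bba^{\circ k}(\x)[\z] = \sum c_w^k w(\x)[\z]$, the monomials $w$ with $|w|_z = 0$ contribute $c_w^k w(\x)$ unchanged to $\bba^{\circ n}$, while any $w$ with $|w|_z > 0$ satisfies $|w| \ge \bbd^k_a$, and since $\bba^{\circ n-k}$ has no constant term the substituted term $w(\x)[\bba^{\circ n-k}(\x)[\z]]$ has all monomials of length at least $|w| \ge \bbd^k_a$. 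Hence the coefficients of $\bba^{\circ n}$ and $\bba^{\circ k}$ agree on every monomial of length $< \bbd^k_a$, and in particular on the pure $x$-monomials of such lengths that define $\mfa^k$ and $\mfa^n$.

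For \ref*{it::ak and bbd item 4}, item \ref*{it::ak and bbd item 3} combined with \ref*{it::ak and bbd item 2} gives $\omega(\mfa^k, \mfa^n) \ge \bbd^k_a > k$ for $n \ge k$, so $(\mfa^k)$ is Cauchy in $(\C\fpsx)^h$ and converges to some $a$. To see $a = \bba(\x)[a(\x)]$, I would fix an arbitrary monomial $v \in \fralg{\bbx}$ and choose $k$ with $\bbd^k_a > |v|$. Since $\bba^{\circ k}(\x)[\z] = \mfa^k(\x) + \alpha^k(\x)[\z]$ with every monomial of $\alpha^k$ of length $\ge \bbd^k_a$, the series $\bba^{\circ k+1}(\x)[0] = \bba(\x)[\mfa^k(\x)]$ agrees with $\bba(\x)[a(\x)]$ on monomials of length $< \bbd^k_a$ (composition with a series having no constant term preserves agreement up to a given degree, which is a short valuation check), while it also equals the pure $x$-part of $\bba^{\circ k+1}$, which by \ref*{it::ak and bbd item 3} agrees with $a$ on monomials of length $< \bbd^{k+1}_a \ge \bbd^k_a$. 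Hence $v$ receives the same coefficient in $a$ and in $\bba(\x)[a]$. Finally, for uniqueness, given any $b$ with $b = \bba(\x)[b]$, evaluating at $\x = 0$ forces $b$ to have no constant term, and iterating gives $b = \bba^{\circ k}(\x)[b] = \mfa^k(\x) + \alpha^k(\x)[b]$ where $\alpha^k(\x)[b]$ has all monomials of length $\ge \bbd^k_a$; thus $b$ agrees with $\mfa^k$, hence with $a$, up to degree $\bbd^k_a - 1$, and letting $k \to \infty$ yields $b = a$. The main technical care is the valuation bookkeeping in \ref*{it::ak and bbd item 3} and the continuity argument for the fixed-point equation in \ref*{it::ak and bbd item 4}, but both reduce to elementary tracking of monomial lengths under substitution.
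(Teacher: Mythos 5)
Your proof is correct and follows essentially the same route as the paper: the same appeal to Lemma~\ref{lem:bbd props}\ref*{it:bbd props item 3} and Lemma~\ref{lem:bba n+m} for items \ref*{it::ak and bbd item 1}--\ref*{it::ak and bbd item 3}, and the same degree-tracking fixed-point argument for item \ref*{it::ak and bbd item 4}. The only blemish is the asserted identity $\bba^{\circ k+1}(\x)[0]=\bba(\x)[\mfa^k(\x)]$, which holds only modulo monomials of length at least $\bbd^k_a$ (since $\bba^{\circ k}(\x)[0]$ may contain pure $x$-monomials of length $\geq\bbd^k_a$ beyond $\mfa^k$), but your valuation check already covers this and the conclusion is unaffected.
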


\begin{proof}
	If $\bbd^{k+1}_a<\infty$ then there is a monomial $w$ appearing in $\bba^{\circ k+1}$ with $\abs{w}_z>0$.
	However, since $\bba^{\circ k+1}(\x)[\z] = \bba(\x)[\bba^{\circ k}(\x)[\z]]$, Lemma~\ref{lem:bbd props}(iii) tells us exactly $\bbd_a^{k+1}>\bbd_z^k$.
	
	Suppose $\bbd^n_a = \infty$ for some $n$ and set $N = \min\set{k:\bbd^k_a = \infty}\geq 1$.
	We note $\bbd^{N-1}_a<\infty = \bbd^N_a$ and if $k<N-1$ then $\bbd^k_a<\bbd_a^{k+1}<\infty$.
	For any $k\geq N$,
	\[
		\bba^{\circ k}(\x)[\z] = \bba^{\circ N}(\x)[\bba^{\circ k-N}(\x)[\z]] 
	\]
	by Lemma~\ref{lem:bba n+m}.
	Since there are no $z$-terms appearing in $\bba^{\circ N}$,
	\[
		\bba^{\circ k}(\x)[\z]=\bba^{\circ N}(\x)[\bba^{\circ k-N}(\x)[\z]] = \bba^{\circ N}(\x)[\z]
	\]
	Hence $\bbd^{k}_a = \bbd^{N}_a = \infty$ and item (i) is proved.
	
	By item (i) we know $\bbd^{k+1}_a\geq \bbd^k_a+1$ for all $k$.
	Since $\bbd_a^1>1$ by hypothesis, we see that $\bbd_a^k\geq \bbd_a^1+k-1>k$, thus item (ii) is proved.

	First, recall from \eqref{eq:bba k affine form} that $\bba^{\circ k}(\x)[\z] = \mfa^k(\x) + \alpha^k(\x)[\z]$.
	If $\alpha^k = 0$ then $\bbd^k_a = \infty$ and $\bba^{\circ k}(\x)[\bba(\x)[\z]] = \bba^{\circ k}(\x)[\z]$, hence
	the coefficients of $\mfa^k$ and $\mfa^{k+1}$ agree up to $\bbd^k_a$, i.e. $\mfa^k=\mfa^{k+1}$.

	If $\alpha^k\neq 0$ then the minimum length of a monomial appearing in $\alpha^k(\x)[\bba(\x)[\z]]$ is at least $\bbd^k_a$ 
	since the minimum length of a monomial appearing in $\alpha^k(\x)[\z]$ is $\bbd^k_a$.
	Hence, the coefficients of $\bba^{\circ k+1}(\x)[\z]$ and $\bba^{\circ k}(\x)[\z]$ agree on monomials of 
	length less than $\bbd^k_a$, and in particular, the coefficients of $\mfa^k$ and $\mfa^{k+1}$ 
	agree on monomials of length less than $\bbd^k_a$.
	Hence, with iteration, if $n\geq k$ then the coefficients of $\bba^{\circ k}$, $\bba^{\circ n}$, 
	$\mfa^k$ and $\mfa^{n}$ agree on monomials of length less than $\bbd^k_a$.
	Thus, item (iii) is proved.

	Finally, to prove item (iv), we observe $d(\mfa^n,\mfa^m)\leq2^{-\min\set{n,m}}$ (recall $d$ is the metric on formal power series), 
	hence $(\mfa^k)$ is a Cauchy sequence and thus converges.
	Set $a = \lim_{k\to\infty} \mfa^k$.
	
	Let $n\in\Z^+$ be given and note $\bbd^n_a>n$, by item (ii).
	By item (iii), the coefficients of $a$, $\bba^{\circ n}$ and $\mfa^n$ agree on monomials of length less than $\bbd^n_a$.
	Hence, the coefficients of $\mfa^n(\x)$, $\bba(\x)[\mfa^n(\x)]$ and $\bba^{\circ n}(\x)[\z]$
	all agree on monomials of length less than $n$.
	Consequently, the coefficients of $a(\x)$ and $\bba(\x)[a(\x)]$ must agree on all monomials of length less than $n$.
	Thus $a(\x) = \bba(\x)[a(\x)]$.
	
	If $\hat{a}$ is any formal power series mapping such that $\bba(\x)[\hat{a}(\x)] = \hat{a}(\x)$ then 
	$\bba^{\circ n}(\x)[\hat{a}(\x)] = \hat{a}(\x)$ for all $n\geq 1$.
	However, this implies that the coefficients of $a$ and $\hat{a}$ agree on monomials of length less than $n$, for all $n$.
	Thus, $\hat{a} = a$.
\end{proof}

In order to connect items \ref*{it::ak and bbd item 3} and \ref*{it::ak and bbd item 4}
	to other ideas from analysis we define a partial ordering on $\C\fpsx$.
If $\alpha = \sum_{w} a_w w$ and $\beta = \sum_w b_w w$, then we say $\alpha \leq \beta$ if $a_w = 0$ whenever $b_w = 0$
and $a_w = b_w$ whenever $a_w\neq 0$.
Thus, Lemma \ref{lem:ak and bbd} says $(\mfa^n)$ is an increasing sequence of polynomials with $a$ as its unique limit.

Under the correct reformulation, Lemma \ref{lem:ak and bbd} is actually an implicit function theorem.
In Section \ref{sec:Reduce} we fully define the free derivative of a formal power series,
allowing us to easily state and prove Theorem \ref{thm:fps implicit func thm}, the implicit function theorem for free formal power series.

Although the definitions and results in Lemma \ref{lem:ak and bbd} are valid when $g\neq h$, 
	when applying these ideas to Jacobian matrices we often assume $g=h$.

\begin{definition}
	\label{def:aux inv}
	Suppose $p\in (\C\fpsx_+)^g$ has a Jacobian matrix $J_p\in M_g(\C\fpsx)$ such that $J_p^{-1}\in M_g(\C\fpsx)$.
	Define the \dfn{auxiliary inverse} of $p$ to be $\bbq(\x)[\z] = \x J_p^{-1}(\z)\in(\C\fps{\bbx\cup\bbz}_+)^g$ and
	recursively define the $\bm{k^{th}}$ {\bf auxiliary inverse} by
	\[
		\bbq^{\circ k}(\x)[\z] 
		= \bbq\left(\x\right)\left[\bbq^{\circ k-1}(\x)[\z]\right]
	\]
	where $\bbq^{\circ 1}(\x)[\z] = \bbq(\x)[\z]$.
\end{definition}

%\begin{lemma}
%	\label{lem:bbq n+m}
%	If $n,m\in\Z^+$ then
%	\[
%		\bbq^{\circ n}(\x)[\bbq^{\circ m}(\x)[\z]]=\bbq^{\circ m}(\x)[\bbq^{\circ n}(\x)[\z]]
%			=\bbq^{\circ n+m}(\x)[\z].
%	\]
%\end{lemma}
%
%\begin{proof}
%	We first prove
%	\begin{equation}
%		\label{eq:bbq n+1}
%		\bbq^{\circ n+1}(\x)[\z] = \bbq(\x)[\bbq^{\circ n}(\x)[\z]] = \bbq^{\circ n}(\x)[\bbq(\x)[\z]]
%	\end{equation}
%	via induction.
%	The base case is from the definition, so suppose \eqref{eq:bbq n+1} holds for $n$ and consider
%	\begin{align*}
%		\bbq^{\circ n+2}(\x)[\z] &= \bbq(\x)[\bbq^{\circ n+1}(\x)[\z]] = \bbq(\x)[\bbq^{\circ n}(\x)[\bbq(\x)[\z]]]\\
%		&= \bbq^{\circ n+1}(\x)[\bbq(\x)[\z]].
%	\end{align*}
%	Thus \eqref{eq:bbq n+1} holds in general.
%	
%	Now, take any $n,m\in\Z^+$ and consider $\bbq^{\circ n}(\x)[\bbq^{\circ m}(\x)[\z]]$.
%	Apply \eqref{eq:bbq n+1} $m$ times yields
%	\[
%		\bbq^{\circ n}(\x)[\bbq^{\circ m}(\x)[\z]]=\bbq^{\circ n+m}(\x)[\z],
%	\]
%	while applying \eqref{eq:bbq n+1} $n$ times gives
%	\[
%		\bbq^{\circ n+m}(\x)[\z] = \bbq^{\circ m}(\x)[\bbq^{\circ n}(\x)[\z]].
%	\]
%\end{proof}

The indeterminates $z_1,\dots,z_g$, in $\bbq(\x)[\z]$ are `targets' for composition of $\bbq$ with itself.
As such it is good to understand how the $z$ terms behave under the successive compositions.
We imitate the setup of Lemma~\ref{lem:bbd props}.
%in particular we would like to know the minimum length of any word containing a $z$-term.
For any $k\geq 1$ we write
\[
	\bbq^{\circ k}(\x)[\z] = \sum_{w\in \fralg{\bbx\cup\bbz}} \rho^k_w w
\]
where $\rho^k_w \in \C^g$, and for shorthand purposes we set $\bbd_q^k = \bbd_z(\bbq^{\circ k})$.
We split $\bbq^{\circ k}$ into terms with degree less than $\bbd^k_q$ 
and those with degree greater than or equal $\bbd^k_q$;
\[
	\mfq^k(\x) = \sum_{\substack{w\in\fax \\ \abs{w}<\bbd^k_q}} \rho^k_w w
	\quad \text{ and } \quad
	r^k(\x)[\z] = \sum_{\substack{w\in\fralg{\bbx\cup\bbz} \\ \abs{w}\geq\bbd^k_q}} \rho^k_w w.
\]
Thus
\begin{equation}
	\label{eq:bbq k form}
	\bbq^{\circ k}(\x)[\z] = \mathfrak{q}^k(\x) + r^k(\x)[\z].
\end{equation}
Since the minimum length of any monomial appearing in $r^k(\x)[\bbq(\x)[\z]]$ is greater than $\bbd^k_q$,
we have $\rho^k_w = \rho^{k+1}_w$ for all $\abs{w}<\bbd^k_q$.

\begin{remark}
	\label{rem:bbq ok}
	Since the auxiliary inverse, $\bbq(\x)[\z] = \x J_p^{-1}(\z)$, we note that $\bbd^1_q = \bbd_z(\bbq)>1$.
	Hence, Lemmas \ref{lem:bba n+m} and \ref{lem:ak and bbd} apply to $\bbq$.
\end{remark}

	% \begin{restatable*}{proposition}{PROPcompinv}
%	\begin{proposition}	\label{prop:comp inv}
%		Suppose $p\in (\C\fpsx_+)^g$ and $J_p$ is the Jacobian matrix of $p$.
%		If $J_p$ is invertible in $M_g(\C\fpsx)$ then there exists $q\in (\C\fpsx_+)^g$
%		such that $p$ and $q$ are compositional inverses.
%		In particular, if $\bbq(\x)[\z] = \x J_p^{-1}(\z)$ is the auxiliary inverse of $p$, then
%		$q(\x) = \bbq(\x)[q(\x)]$.
%\end{proposition}
	% \end{restatable*}
	
\PROPcompinv

\begin{proof}
	By Corollary~\ref{cor:inv mat reps} we know that $p$ and $q$ are compositional inverses if and only if
	$J_q(\x) = J_p^{-1}(q(\x))$ and $J_p(\x) = J_q^{-1}(p(\x))$.

	Lemma~\ref{lem:ak and bbd} implies there exists a unique $q\in (\C\fpsx_+)^g$ such that $\bbq(\x)[q(\x)] = q(\x)$,
	where $\bbq(\x)[\z] = \x J_p^{-1}(\z)$ is the auxiliary inverse of $p$.
	Since $q\in(\C\fpsx_+)^g$, we see that $J_p^{-1}(q(\x))\in M_g(\C\fpsx)$ is defined
	and $J_p(q(\x))$ and $J_p^{-1}(q(\x))$ are inverses.
	Hence $q(\x) = \bbq(\x)[q(\x)] = \x J_p^{-1}(q(\x))$ and
	\[
		p(q(\x))=q(\x)J_p(q(\x))=\x J_p^{-1}(q(\x))J_p(q(\x))=\x I_g = \x.
	\]
	
	Next, $q\in\C\fpsx_+^g$ also has an auxiliary inverse, $\bbp(\x)[\z] = \x J_p(q(\z))$.
	Applying Lemma~\ref{lem:ak and bbd} and the same argument as above we know there is a $\tilde{p}\in\C\fpsx_+^g$ 
	such that $\x = q(\tilde{p}(\x))$ and
	\[
		\tilde{p}(\x) = \bbp(\x)[\tilde{p}(\x)] = \x J_p(q(\tilde{p}(\x))).
	\]
	However, since $q(\tilde{p}(\x)) = \x$,
	\[
		\tilde{p}(\x) = \x J_p(q(\tilde{p}(\x))) = \x J_p(\x) = p(\x).
	\]
	Thus $q(p(\x)) = \x$.
	Therefore, $q(\x) = \bbq(\x)[q(\x)]$ and $p$ and $q$ are compositional inverses.
\end{proof}

We note that Proposition~\ref{prop:comp inv} does not require that $p$ corresponds to a bijective free analytic map.
However, $J_p(0)$ and $J_p^{-1}(0)$ both exist, thus with an application of the free inverse function theorem (Theorem~5 in \cite{Pas14})
we get that $p$ is locally invertible on some open free set containing the origin.

We now have conditions guaranteeing a formal power series has a compositional inverse and in fact, we have a way
to calculate the inverse, or at least to approximate it.

\begin{definition}
	\label{def:prop alg poly}
	We once again suppose $\bbz = \set{z_1,\dots,z_h}$, where $h$ is not necessarily equal to $g$.
	Suppose $\alpha\in (\C\fralg{\bbx\cup\bbz})^h$.
	We say $\alpha$ is a \dfn{proper algebraic polynomial}\footnote %FOOTNOTE
	{
		This definition differs from the established terminology often seen in enumerative combinatorics 
		and automata theory.
		In those contexts we say a system of equations $\alpha(\x)[\z] = \z$ is a 
		{\bf proper algebraic system} if $\alpha$ is a proper algebraic polynomial.
	}
	if $\alpha$ has no constant terms and $\bbd_z(\alpha)>1$.
	
	We say $\beta\in (\C\fpsx_+)^h$ is a \index{proper algebraic polynomial!solution}{\bf solution} to the proper algebraic polynomial 
	if $\alpha(\x)[\beta(\x)] = \beta(\x)$.
	Each $\beta^i$ is called a \index{proper algebraic polynomial!component} {\bf component} of the solution.
	
	By either Lemma \ref{lem:ak and bbd} or Theorem~6.6.3 in \cite{Sta11}, every proper algebraic polynomial has a unique solution.
	Let $\gamma\in\C\fpsx$ with constant term $c$.
	We say $\gamma$ is \dfn{algebraic} if $\gamma - c$ is a component of the solution to some proper algebraic polynomial.
\end{definition}

\begin{proposition}
	\label{prop:q alg}
	Suppose $p\in (\C\fax_+)^g$ and $J_p\in M_g(\C\fax)$ is the Jacobian matrix of $p$.
	If $J_p^{-1}\in M_g(\C\fax)$, then the compositional inverse of $p$ is algebraic.
\end{proposition}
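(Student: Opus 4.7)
The plan is to exhibit $q$ as the unique solution to a proper algebraic polynomial mapping constructed directly from $J_p^{-1}$. The key observation is that Proposition~\ref{prop:comp inv} already pinpoints $q$ as the unique fixed point of the auxiliary inverse $\bbq(\x)[\z] = \x J_p^{-1}(\z)$, so once we verify that $\bbq$ qualifies as a proper algebraic polynomial mapping in the sense of Definition~\ref{def:prop alg poly}, the conclusion follows immediately.

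First I would invoke Proposition~\ref{prop:comp inv}: since $J_p$ has a multiplicative inverse (indeed $J_p^{-1}\in M_g(\C\fax)$), $p$ has a compositional inverse $q\in(\C\fpsx_+)^g$ which satisfies $q(\x) = \x J_p^{-1}(q(\x))$. In other words, $q$ is a fixed point of the map $\z \mapsto \bbq(\x)[\z]$.

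Next I would check that $\bbq(\x)[\z] = \x J_p^{-1}(\z)$ is a proper algebraic polynomial mapping. Since $J_p^{-1}\in M_g(\C\fax)$ by hypothesis and $\x$ is a row of polynomials, $\bbq$ lies in $(\C\fralg{\bbx\cup\bbz})^g$. Every monomial of $\bbq$ carries a factor from $\x$, so $\bbq$ has no constant term. Moreover, every monomial of $\bbq$ that contains a $z$-letter necessarily also contains an $x$-letter (from the leading $\x$), so such a monomial has length at least $2$; hence $\bbd_z(\bbq) > 1$. Thus $\bbq$ is a proper algebraic polynomial mapping.

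Finally, because $q(\x) = \bbq(\x)[q(\x)]$, the tuple $q$ is a solution (in the sense of Definition~\ref{def:prop alg poly}) to the proper algebraic polynomial mapping $\bbq$. By definition, each component $q^i$ of $q$ is algebraic, so the compositional inverse of $p$ is algebraic as claimed. There is essentially no obstacle here beyond verifying the two defining conditions on $\bbq$; the real work was done in Proposition~\ref{prop:comp inv} and Lemma~\ref{lem:ak and bbd}, which together guarantee both existence and uniqueness of the fixed point $q$.
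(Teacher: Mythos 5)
Your proof is correct and follows essentially the same route as the paper: both invoke Proposition~\ref{prop:comp inv} to obtain $q$ as the fixed point $q(\x)=\x J_p^{-1}(q(\x))$ and then observe that $\bbq(\x)[\z]=\x J_p^{-1}(\z)$ has no constant term and satisfies $\bbd_z(\bbq)>1$, hence is a proper algebraic polynomial mapping with $q$ as its solution. No gaps.
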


\begin{proof}
	Recall $\bbq$, the auxiliary inverse of $p$, is given by $\bbq(\x)[\z] = \x J_p^{-1}(\z)$.
	Proposition~\ref{prop:comp inv} tells us there is a $q\in(\C\fpsx)^g$ such that $q$ and $p$ are
	compositional inverses and $\bbq(\x)[q(\x)] = q(\x)$.
	Observe $\bbq(\x)[\z]$ has no constant terms and $\bbd_z(\bbq)>1$.
	Thus $\bbq(\x)[\z]$ is a proper algebraic polynomial and $q$ is the unique 
	algebraic function satisfying $\bbq(\x)[q(\x)] = q(\x)$.
\end{proof}

We know every polynomial mapping is a rational mapping and Example 6.6.5 in \cite{Sta11} shows every 
rational mapping is an algebraic mapping.
Unfortunately, this does not help us prove a bijective free polynomial has a free polynomial inverse
(at least not directly).

If $p$ is not bijective then it may still have a compositional inverse that is algebraic.
The auxiliary inverse can be a polynomial even if $p$ is not injective, as Example \ref{ex:classic alg} shows.
In the case where $p$ is not injective but $\bbq$ is still a polynomial, we get a unique algebraic function $q$
so that $p(q(X)) = X$ and $q(p(X)) = X$ whenever these compositions are defined.

%
%    ######  ##     ## ########   ######  ########  ######  ######## ####  #######  ##    ## 
%   ##    ## ##     ## ##     ## ##    ## ##       ##    ##    ##     ##  ##     ## ###   ## 
%   ##       ##     ## ##     ## ##       ##       ##          ##     ##  ##     ## ####  ## 
%    ######  ##     ## ########   ######  ######   ##          ##     ##  ##     ## ## ## ## 
%         ## ##     ## ##     ##       ## ##       ##          ##     ##  ##     ## ##  #### 
%   ##    ## ##     ## ##     ## ##    ## ##       ##    ##    ##     ##  ##     ## ##   ### 
%    ######   #######  ########   ######  ########  ######     ##    ####  #######  ##    ##
%
\subsection{Invertibility of the Jacobian matrix}
	\label{ssec:FAF and local inv}
	In this section we establish the following result about bijective free polynomials:
	\THMpolymatinv*
	
	By using the Free Grothendieck theorem, we have that every injective free polynomial has a free polynomial inverse.
	Hence, Theorem~\ref{thm:PMI is poly} is an unsurprising consequence of the chain rule.
	However, Theorem~\ref{thm:PMI is poly} is critical for the proof of the Free Grothendieck theorem and we cannot forgo
	its exposition.

\begin{lemma}
	\label{lem:q agrees with poly}
	If $p:M(\C)^g\to M(\C)^g$ is a bijective free polynomial and $q$ is the inverse of $p$ then for each $n$, 
	there exists a free polynomial $r_n$ such that $q(X) = r_n(X)$ for all $X\in M_n(\C)^g$.
\end{lemma}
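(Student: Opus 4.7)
The plan is to combine Grothendieck's classical theorem for polynomial bijections of $\C^N$ with the invariant theory of matrices (Procesi's concomitant theorem), and then to use the direct sum property of the free function $q$ to eliminate trace coefficients.

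First I would identify $M_n(\C)^g$ with $\C^{gn^2}$ by reading off matrix entries. Then $p[n]\colon \C^{gn^2}\to\C^{gn^2}$ is a bijective polynomial mapping, so by Grothendieck's classical theorem its inverse $q[n]=p[n]^{-1}$ is a polynomial mapping in those matrix entries. The freeness of $q$ supplies two additional structures used below: $q[n]$ is equivariant under simultaneous $GL_n(\C)$-conjugation, $q[n](S^{-1}XS)=S^{-1}q[n](X)S$, and $q$ respects direct sums across sizes.

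Equivariance is the setting of Procesi's concomitant theorem, which realizes each component $q^i[n]\colon M_n(\C)^g\to M_n(\C)$ as a trace polynomial
\[
q^i[n](X)=\sum_j c_j^i(X)\,m_j(X),
\]
with $m_j$ noncommutative monomials in $X_1,\dots,X_g$ and $c_j^i$ polynomials in traces $\tr(w(X))$. I would choose the $m_j$ from a module basis of the equivariants over the invariant ring so that the evaluations $m_j(X)$ are generically linearly independent in $M_n(\C)$. The direct sum property $q[2n](X\oplus Y)=q[n](X)\oplus q[n](Y)$, combined with $m_j(X\oplus Y)=m_j(X)\oplus m_j(Y)$ and the additivity $\tr(w(X\oplus Y))=\tr(w(X))+\tr(w(Y))$, gives after substituting into the Procesi representation of $q^i[2n]$ and comparing upper-left blocks
\[
\sum_j c_j^i(X\oplus Y)\,m_j(X)=\sum_j c_j^i(X)\,m_j(X).
\]
Generic linear independence of the $m_j(X)$ forces $c_j^i(X\oplus Y)=c_j^i(X)$ for all $Y$; since any nonconstant trace in $X\oplus Y$ depends nontrivially on $Y$, each $c_j^i$ must reduce to a constant $\gamma_j^i\in\C$. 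Consequently $q^i[n]$ is the evaluation of the free polynomial $r_n^i=\sum_j \gamma_j^i m_j$, and $r_n=(r_n^1,\dots,r_n^g)$ is the desired free polynomial mapping.

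The main obstacle is managing the non-uniqueness of the Procesi decomposition caused by the polynomial identities that hold on $M_n(\C)$ (Cayley--Hamilton and its descendants): without care, a decomposition with nonconstant trace coefficients might still agree with a free polynomial evaluation modulo trace identities. I would handle this either by fixing a minimal module basis of the concomitants, so that the representation is essentially canonical and the direct-sum cancellation argument applies cleanly term-by-term, or by performing the comparison first inside the free trace algebra $T_g$, where no identity intervenes and the decomposition is unique, and only then specializing to $M_n(\C)$.
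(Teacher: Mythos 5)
Your route is genuinely different from the paper's. The paper extracts the degree bound from the classical Grothendieck theorem exactly as you do, but then invokes the fact that the inverse of a bijective free map is a free \emph{analytic} map (\cite{HKM11}), expands $q$ in a convergent free power series, and simply truncates that series at the degree of the commutative inverse; no invariant theory appears. Your Procesi--plus--direct-sums strategy is the one underlying \cite{KlSp17} (used in the paper as Lemma~\ref{lem:t deg bdd poly}), and it can in principle be completed, but as written it has a genuine gap exactly at the step you flag. The cancellation $c_j^i(X\oplus Y)=c_j^i(X)$ needs the $m_j(X)$ to be linearly independent for generic $X\in M_n(\C)^g$; since they live in the $n^2$-dimensional space $M_n(\C)$, at most $n^2$ words can qualify, whereas the trace-polynomial representation of $q^i[n]$ in general requires words up to length $\deg q[n]$ --- a bound which, coming from commutative Grothendieck in $gn^2$ variables, grows with $n$. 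So for a fixed small $n$ and $p$ of large degree the term-by-term comparison fails, and you cannot escape to a larger matrix size where independence holds without first proving a uniform degree bound, which is morally the content of the lemma. Neither proposed fix closes this: the concomitants are not a free module over the invariants, and passing to a basis of the generic fibre introduces $X$-dependent rational coefficients that destroy the subsequent specialization; and ``comparing in the free trace algebra, where the decomposition is unique'' is circular, because $q[n]$ is only a function on $M_n(\C)^g$ and the lift to the free trace algebra is precisely the non-canonical choice at issue.

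A workable repair along your lines: use the Grothendieck bound $d_n=\deg q[n]$ to split $q$ into its finitely many homogeneous components $q_m$, $m\le d_n$ (each is still similarity-equivariant and respects direct sums, by comparing powers of $t$ in $q(tX\oplus tY)$); for each fixed $m$ run your argument at a level $N\gg m$, where words of length at most $m$ are linearly independent over the trace field and there are no trace identities in that degree, so each $q_m$ is a free homogeneous polynomial there; then restrict back to level $n$ via $X\mapsto X\oplus 0$. Also, once $c_j^i(X\oplus Y)=c_j^i(X)$ is established, the clean way to finish is to set $X=0$ and use $\tr\bigl(w(0\oplus Y)\bigr)=\tr\bigl(w(Y)\bigr)$, rather than arguing about which traces ``depend nontrivially on $Y$.''
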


\begin{proof}
	This is part of Theorem~\ref{thm:Free Jacobian Conjecture}(iv) 
	and a proof can be found in \cite{Pas14}, however, for the reader's convenience we present 
	a more detailed argument showing $q$ agrees with a free polynomial on each $M_n(\C)^g$.
	
	Let $\pi:M_n(\C)^g\to \C^{g n^2}$ be the canonical isomorphism.
	Since $p$ is bijective, $p[n]:M_n(\C)^g\to M_n(\C)^g$ is bijective and we may view $p[n]$ as a polynomial
		in $g n^2$ variables.
	That is, $\pi\circ p\circ \pi^{-1}$ is a bijective (commutative) polynomial, hence by the classical Grothendieck theorem, 
	$\pi\circ p\circ \pi^{-1}$ has a (commutative) polynomial inverse $\hat{q}:\C^{g n^2}\to \C^{g n^2}$.
	
	Since $p$ is a bijective free polynomial, $q$ is free analytic by Theorem 3.1 in \cite{HKM11}, hence 
			$q[n]:M_n(\C)^g\to M_n(\C)^g$ is analytic and there is a power series, $R = \sum_{m=0}^\infty \sum_{\abs{w} = m} r_w w$ 
			such that $R$ converges on $M_n(\C)^g$ and $R(X) = q[n](X)$ for all $X\in M_n(\C)^g$.
	In particular, $(\pi^{-1}\circ \hat{q}\circ \pi)(X) = R(X) = q[n](X)$ for all $X\in M_n(\C)^g$, hence $\deg(\hat{q}) = \deg(q[n])$.
	Set $\hat{R} = \sum_{m=0}^{\deg(\hat{q})} \sum_{\abs{w}=m} r_w w$ and note $\hat{R}(X) = q[n](X)$ for all $X\in M_n(\C)^g$.
	Since $\hat{R}$ is a free polynomial, we conclude $q$ agrees with a free polynomial on $M_n(\C)^g$.	
%	Since $p$ is bijective, $p[n]:M_n(\C)^g\to M_n(\C)^g$ is bijective and we may view $p[n]$ as a polynomial
%		in $g\hspace{0.1em}n^2$ variables, i.e. there exists an isomorphism $\pi:M_n(\C)^g\to \C^{g\hspace{0.05em}n^2}$ and polynomial
%		$\hat{p}:\C^{g\hspace{0.05em}n^2}\to\C^{g\hspace{0.05em}n^2}$ with $\hat{p}(X) = (\pi\circ p[n]\circ\pi^{-1})(X)$, 
%		for all $X\in M_n(\C)^g$.
%	Moreover, $\hat{p}$ is bijective and its inverse, $\hat{q} = \pi\circ q[n]\circ \pi^{-1}$, is a polynomial by the classical
%		Grothendieck theorem.
%	Note $\deg(\hat{q}) = \deg(q)$
%	
%	Since $p$ is a bijective free polynomial, $q$ is free analytic by Theorem 3.1 in \cite{HKM11}, hence 
%		$q[n]:M_n(\C)^g\to M_n(\C)^g$ is analytic and there is a power series, $R = \sum_{m=0}^\infty \sum_{\abs{w} = m} r_w w$ 
%		such that $R$ converges on $M_n(\C)^g$ and $R(X) = q[n](X)$ for all $X\in M_n(\C)^g$.
%	Hence, $R(X) = q[n](X) = (\pi^{-1}\circ\hat{q}\circ\pi)(X)$ and $\hat{q}$ is a polynomial.
%	For any $X\in M_n(\C)^g$, $R(tX) = \sum_{m=0}^\infty t^m\sum_{\abs{w} = m} r_w w(X)$.
%	Since $(\pi^{-1}\circ\hat{q}\circ\pi)(tX)$ is a polynomial in $t$, with a $t$-degree of at most $\deg(\hat{q})$,
%	we get that $\sum_{\abs{w} = m} r_w w(X) = 0$ for $m>\deg(\hat{q})$.
%	Thus, if we set $\hat{R} = \sum_{m=0}^{\deg(\hat{q})} \sum_{\abs{w} = m} r_w w$ then $\hat{R}(X) = R(X) = q[n](X)$ for all 
%		$X\in M_n(\C)^g$.
%	Lastly, $\hat{R}$ is a free polynomial, therefore $q$ agrees with a free polynomial on $M_n(\C)^g$.
\end{proof}

\begin{remark}
	\label{rem:poly inv PMI}
	If $p$ is a bijective free polynomial with a free polynomial inverse $q$, then both $J_p$ and $J_q$ are polynomial matrices and
	$J_q(p(\x))$ also is a polynomial matrix.
	Observe $I_g = J_{q\circ p}(\x) = J_p(\x) J_q(p(\x))$, thus $J_p^{-1}\in M_g(\C\fax)$ since $J_p(\x)^{-1} = J_q(p(\x))$.
\end{remark}

Remark~\ref{rem:poly inv PMI} is an expected consequence of the Jacobian matrix satisfying the chain rule and 
Corollary 1.4 in \cite{Reu92} offers a slightly different proof.
Certainly if $p$ is invertible then it is bijective, however Example \ref{ex:classic alg} shows that $J_p,J_p^{-1}\in M_g(\C\fax)$
is not sufficient for $p^{-1}$ to be a polynomial.
In that sense there is no Jacobian conjecture for the noncommutative Jacobian matrix.

On the other hand, Theorem~\ref{thm:PMI is poly} profits from a noncommutative Nullstellensatz in \cite{HM04} to prove the Jacobian matrix 
	of an injective free polynomial is invertible over $M_g(\C\fax)$.
Before proving the theorem, we first state the noncommutative Nullstellensatz (proved by George Bergman), Theorem 6.3 in \cite{HM04}.

\begin{thmx}
	\label{thm:NC Nullstell}
	Let $\mathcal{P}\subset \C\fax$ be finite and let $s\in \C\fax$.
	Let $d$ denote the maximum of the $\deg(s)$ and $\set{\deg(p):p\in\mathcal{P}}$.
	There exists a complex Hilbert space $\mathcal{H}$ of dimension $\sum_{j=0}^d g^j$, such that, if
	\[
		s(X)v = 0
	\]
	whenever $X = (X_1,\dots,X_g)\in \B(\mathcal{H})^g$, $v\in \mathcal{H}$, and
	\[
		p(X)v = 0 \text{  for all  } p\in\mathcal{P},
	\]
	then $s$ is in the left ideal generated by $\mathcal{P}$.
\end{thmx}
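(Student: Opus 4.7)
The plan is to prove the contrapositive. Assuming $s \notin L$ where $L := \C\fax \cdot \mathcal{P}$ is the left ideal generated by $\mathcal{P}$, I will exhibit operators $X = (X_1,\dots,X_g) \in \B(\mathcal{H})^g$ on a Hilbert space of dimension at most $\sum_{j=0}^d g^j$ and a vector $v \in \mathcal{H}$ with $p(X)v = 0$ for every $p \in \mathcal{P}$ but $s(X)v \neq 0$. Any shortfall in dimension is remedied by orthogonally adjoining a summand on which each $X_i$ is defined to be zero, yielding a Hilbert space of exactly the required dimension; the Hilbert space structure itself comes from any positive-definite inner product on the underlying finite-dimensional vector space.

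The starting point is the left $\C\fax$-module $\mathcal{H}_\infty := \C\fax / L$ equipped with left-multiplication operators $\tilde{X}_i [r] := [x_i r]$ and the distinguished vector $v_0 := [1]$. An immediate induction on word length gives $r(\tilde{X}) v_0 = [r]$ for every $r \in \C\fax$, so $p(\tilde{X}) v_0 = 0$ for each $p \in \mathcal{P}$ while $s(\tilde{X}) v_0 = [s] \neq 0$. This resolves the statement in infinite dimensions and pinpoints the remaining task: descend to a finite-dimensional subspace without spoiling these two computations.

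To do so, set $W_{\leq m} := \mathrm{span}\{[w] : w \in \fax,\ |w| \leq m\} \subset \mathcal{H}_\infty$ for $0 \leq m \leq d$ and $W := W_{\leq d}$, so $\dim W \leq \sum_{j=0}^d g^j$. Choose any vector-space complement $\mathcal{C}$ with $W = W_{\leq d-1} \oplus \mathcal{C}$, and define $X_i \in \B(W)$ by $X_i|_{W_{\leq d-1}} := \tilde{X}_i|_{W_{\leq d-1}}$ (which indeed lands in $W$, since $|x_i w| \leq d$ whenever $|w| \leq d-1$) together with $X_i|_{\mathcal{C}} := 0$.

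The heart of the argument is the identity $r(X) v_0 = [r]$ for every $r$ with $\deg(r) \leq d$. Inductively, applying any $m < d$ of the operators $X_i$ to $v_0$ produces a vector in $W_{\leq m} \subset W_{\leq d-1}$, so the next $X_i$ applied acts as $\tilde{X}_i$; hence $X_{i_1} \cdots X_{i_k} v_0 = \tilde{X}_{i_1} \cdots \tilde{X}_{i_k} v_0 = [x_{i_1} \cdots x_{i_k}]$ for every word of length $k \leq d$. Thus $p(X) v_0 = [p] = 0$ for $p \in \mathcal{P}$ and $s(X) v_0 = [s] \neq 0$, as desired. The main obstacle, and the reason the more naive attempt to define $X_i$ as truncated left multiplication directly on $V_d / (V_d \cap L)$ fails, is that the truncation of an ideal element need not remain in the ideal; the construction above sidesteps this by inheriting the honest quotient structure from $\mathcal{H}_\infty$ and only modifying the $X_i$ on the complement $\mathcal{C}$, which the computation of $r(X) v_0$ for $\deg(r) \leq d$ never touches.
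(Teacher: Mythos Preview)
The paper does not supply its own proof of this statement: it is quoted verbatim as Theorem~6.3 of \cite{HM04} (attributed there to Bergman) and used as a black box in the proof of Theorem~\ref{thm:PMI is poly}. So there is nothing in the paper to compare against.

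That said, your argument is correct and is essentially the standard one. The quotient module $\C\fax/L$ with the cyclic vector $[1]$ is the universal counterexample, and the only issue is cutting it down to the right finite dimension. Your device---restrict the honest left-multiplication operators $\tilde X_i$ to $W_{\le d-1}$ and kill a complement $\mathcal C$ of $W_{\le d-1}$ inside $W_{\le d}$---is exactly the right move, and your inductive verification that $w(X)v_0=[w]$ for $|w|\le d$ is clean: each intermediate product $X_{i_{m+1}}\cdots X_{i_k}v_0$ lies in $W_{\le k-m}\subset W_{\le d-1}$, so the modification on $\mathcal C$ is never felt. The padding remark and the observation that any inner product on a finite-dimensional space yields the required Hilbert structure are both fine. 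Your closing comment about why one cannot simply work in $V_d/(V_d\cap L)$ is a nice diagnostic of the subtlety.
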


\THMpolymatinv

\begin{proof}
	Let $d = \max\set{\deg(p^{j})}$ and set $N=g^{d+1}$.
	Lemma~\ref{lem:q agrees with poly} tells us $q[N]$ agrees with a free polynomial.
	Suppose $s = (s^1,\dots,s^g)$ is a free polynomial such that $q(X) = s(X)$ for all $X\in M_N(\C)^g$.
	In particular, each $s^j$ has no constant term.
	For each $1\leq j\leq g$ write
	\[
		s^j = \sum_{m=1}^{\deg(s)} \sum_{\abs{w}=m} \sigma^j_w w
	\]
	and observe
	\[
		X_j = s^j(p(X)) = \sum_{m=1}^{\deg(s)} \sum_{\abs{w}=m} \sigma^j_w w(p(X)).
	\]
	Take $X = (X_1,\dots,X_g)\in M_N(\C)^g$ and $v\in \C^N$ such that $v^Tp^{j}(X) = 0$ for all $1\leq j\leq g$.
	Hence $v^Tw(p(X)) =0$ for all $\abs{w}>0$ since $v^Tp^{j}(X) = 0$ for each $1\leq j\leq g$.
	Thus,
	\[
		v^TX_j = v^Ts^j(p(X)) = \sum_{m=1}^{\deg(s)} \sum_{\abs{w}=m} \sigma^j_w v^Tw(p(X)) = 0.
	\]
	By Theorem~\ref{thm:NC Nullstell}, $x_j$ is contained in the right ideal generated by $p^1,\dots, p^{g}$, that is,
	there exist polynomials $\mu_{i,j}$ such that $x_j = \sum_{i=1}^g p^i(\x)\mu_{i,j}(\x)$.
	Let $R = (\mu_{i,j})_{i,j=1}^g\in M_g(\C\fax)$ and observe
	\[
		\x = p(\x)R(\x) = \x J_p(\x)R(\x).
	\]
%	and
%	\[
%		p(\x) = \x J_p(\x) = p(\x) R(\x) J_p(\x).
%	\]
	Therefore $J_p^{-1} = R\in M_g(\C\fax)$.
\end{proof}

%Combining Theorem~\ref{thm:PMI is poly} with Proposition~\ref{prop:q alg} we get the following corollary.
%
%\CORbijpolyalginv
%
%However, we cannot conclude that $q$ is a polynomial as there may be some issue that arises when composing
%$\bbq$ with itself repeatedly.

\begin{example}
	\label{ex:nilpotent sad}
	Let
	\[
		N = \bpm -1 & 1 \\ -1 & 1 \epm
	\]
	and set
	\begin{align*}
		p(\x) &= \x (I_2-N x_1)\\
		&= \bpm x_1, & x_2 \epm \bpm 1+x_1 & -x_1\\ x_1 & 1-x_1 \epm \\
		&= \bpm x_1 + x_1^2 + x_2x_1, & x_2 -x_1^2 - x_2x_1 \epm.
	\end{align*}
	Observe, $J_p=I_2 - N x_1$, and that $N^2 = 0$.
	Hence, $J_p^{-1} = I_2 + N x_1$ and
	\begin{align*}
		\bbq(\x)[\z] &= \x (I_2 + N z_1)\\
		&= \bpm x_1 - x_1z_1 - x_2z_1, &  x_2 +x_1z_1 + x_2z_1\epm\\
		&= \bpm x_1 - (x_1+x_2)z_1, &  x_2 +(x_1 + x_2)z_1\epm.
	\end{align*}
	However, $p$ is not even injective on $\C^2$ since $p(-1/2,-1/2) = (0,1) = p(0,1)$.
	
	Note,
	\[
		\bpm 0 & 1\\ 0& 0 \epm = \bpm 0 & 1\\-1 & 1\epm \bpm -1 & 1 \\ -1 & 1\epm \bpm 1 & -1 \\ 1 & 0 \epm,
	\]
	that is, $N$ is similar to a strictly upper triangular nilpotent matrix.
	Thus, conjugation of a Jacobian matrix by a similarity does not preserve the desirable properties of
	the Jacobian matrix.
\end{example}

In some sense, the noncommutative Jacobian matrix attempts to linearize polynomial mappings so that a reasonable structure
is preserved via composition.
In fact, if $p$ is a formal power series mapping, then $J_p$ is invertible if and only if $p$ is locally invertible at $0$,
a statement reminiscent of the inverse function theorem.
Hence, $J_p^{-1}$ is a linear approximation of $p^{-1}$ at $0$, explaining why we can iteratively construct $p^{-1}$ from 
$J_p^{-1}$.
However, Example \ref{ex:classic alg} shows how the Jacobian matrix can fail to witness the non-injectivity of a polynomial.

In subsection \ref{ssec:bij crit} we construct the hypo-Jacobian matrix of a free polynomial, a matrix whose invertibility exactly
captures the injectivity or non-injectivity of the free polynomial.

\begin{example}
	\label{ex:classic alg}
	This example is investigated in \cite{Reu92} and it shows that there is no Jacobian conjecture with the noncommutative Jacobian matrix.
	Let $p(\x) = (x_1,x_2 - x_1 x_2 x_1)$, and observe
	\[
		J_p(\x) = \bpm 1 & -x_2 x_1 \\ 0 & 1 \epm, \quad \quad J_p^{-1}(\x) = \bpm 1 & x_2 x_1 \\ 0 & 1 \epm.
	\]
	Note $p$ is not bijective and $\bbq(\x)[\z] = (x_1,x_2+x_1z_2z_1)$.
	It is straightforward to verify that 
	$\bbq^{\circ k}(\x)[\z] = (x_1,x_2+x_1^{k}z_2z_1x_1^{k-1}+\sum_{j=1}^{k-1} x_1^j x_2 x_1^j)$ and
	$q(\x) = (x_1,\sum_{j=0}^{\infty} x_1^j x_2 x_1^j)$.
	Thus $q$ is certainly not a polynomial.
	
	Recall $\bbq^{\circ k}(\x)[\z] = \mfq^k(\x) + r^n(\x)[\z]$.
	For this example, $\mfq^k(\x) = (x_1,\sum_{j=0}^{k-1} x_1^j x_2 x_1^j)$ while $r^k(\x)[\z] = (0,x_1^k z_2 x_1^k)$
	and $\bbd^k_q = 2k+1$.
	In particular, $\deg(\mfq^k) = 2k-1$ is strictly increasing with $k$, immediately discounting $q$ from being
	a polynomial, however $q$ is an algebraic function.
\end{example}

	In fact, if $p$ is any free polynomial whose auxiliary inverse, $\bbq$, is a polynomial then the only way for
	the $p$ to have a non-polynomial inverse is if the situation above occurs, 
	that is, $\deg(\mfq^k)$ is a strictly increasing sequence.
	Section~\ref{sec:Reduce} deals with exactly this.

\section{Free derivatives and the linearization of the auxiliary inverse}
	\label{sec:Reduce}
In this section we establish conditions that guarantee $q$, the compositional inverse of $p$, is a polynomial.
We use Theorem~\ref{thm:Free Jacobian Conjecture} to linearize $\bbq$, the auxiliary inverse of $p$, in terms of $z_1,\dots,z_g$.
This linearization has the caveat that we introduce $g$-`dummy' variables.

%
%    ######  ##     ## ########   ######  ########  ######  ######## ####  #######  ##    ## 
%   ##    ## ##     ## ##     ## ##    ## ##       ##    ##    ##     ##  ##     ## ###   ## 
%   ##       ##     ## ##     ## ##       ##       ##          ##     ##  ##     ## ####  ## 
%    ######  ##     ## ########   ######  ######   ##          ##     ##  ##     ## ## ## ## 
%         ## ##     ## ##     ##       ## ##       ##          ##     ##  ##     ## ##  #### 
%   ##    ## ##     ## ##     ## ##    ## ##       ##    ##    ##     ##  ##     ## ##   ### 
%    ######   #######  ########   ######  ########  ######     ##    ####  #######  ##    ##
%
\subsection{Polynomial criteria}
	\label{ssec:Polynomial Criteria}
We begin by recalling a few facts about auxiliary inverses.
By Theorem~\ref{thm:PMI is poly} we know if $p$ is a bijective free polynomial with $p(0)=0$ 
then $J_p$ and $J_p^{-1}$ are matrices of free polynomials.
Recall from Definition \ref{def:aux inv} that $\bbq^k$, the $k^{\text{th}}$ auxiliary inverse of $p$, is given by
$\bbq^{\circ k}(\x)[\z] = \mfq^k(\x)+r^k(\x)[\z]$, where $\deg(\mfq^k)< \bbd^k_q$.
Furthermore, by Lemma~\ref{lem:ak and bbd} we know $\bbd^k_q\nearrow\infty$.
Proposition~\ref{prop:q alg} tells us that $q$, the inverse of $p$, is the unique solution of 
$\bbq(\x)[q(\x)]=q(\x)$.
Since $q = \lim_{k\to\infty} \mfq^k$, if $q$ were actually a free polynomial then we would expect a large degree
gap to appear in the monomials of $\bbq^{\circ k}$.
This is precisely what Lemma~\ref{lem:a gaps} deals with.

\begin{lemma}
	\label{lem:a gaps}
	Suppose $b\in (\C\fax_+)^g$ and $a\in(\C\fpsx_+)^g$ are compositional inverses,
	$\bba\in(\C\fralg{\bbx\cup\bbz}_+)^g$ is a proper algebraic polynomial, and
	$\bba(\x)[a(\x)] = a(\x)$.
	Let $\bba^{\circ k}(\x)[\z] =  \mfa^k(\x)+\alpha^k(\x)[\z]$ as in \eqref{eq:bba k form} and 
	$\bbd^k_a = \bbd_z(\bba^{\circ k})$ as in \eqref{eq:bbd}.
	The following are equivalent;
	\begin{enumerate}[label=(\roman*)]
		\item $a$ is a polynomial; \label{it:a gap item 1}
		\item $a = \mfa^m$ for some $m\in\Z^+$; \label{it:a gap item 2}
		\item $\bbd_a^N>\deg(\mfa^N)\deg(b)$ for some $N\in\Z^+$. \label{it:a gap item 3}
	\end{enumerate}
\end{lemma}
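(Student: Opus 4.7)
The plan is to prove the three implications (i) $\Leftrightarrow$ (ii) and (ii) $\Leftrightarrow$ (iii), with (iii) $\Rightarrow$ (ii) being the only substantive step.

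I would first dispense with the equivalence (i) $\Leftrightarrow$ (ii) using Lemma~\ref{lem:ak and bbd}. Since $\mfa^k$ consists precisely of the monomials of $\bba^{\circ k}$ of length less than $\bbd^k_a$, and, by item~(iii) of that lemma, the coefficients of $a$ and $\mfa^k$ agree on all such monomials, we have $a - \mfa^k$ supported only in degrees $\geq \bbd^k_a$. Item~(i) of that lemma together with item~(ii) shows $\bbd^k_a \nearrow \infty$ (possibly terminating at $\infty$). Hence if $a$ is a polynomial of degree $D$, choosing $k$ with $\bbd^k_a > D$ forces $a = \mfa^k$; the reverse is immediate from the definition of $\mfa^m$.

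Next, for (ii) $\Rightarrow$ (iii): if $a = \mfa^m$, then for every $k \geq m$ the argument above shows $\mfa^k = a$, so $\deg(\mfa^k) = \deg(a)$ is constant in $k$. Since $\bbd^k_a \to \infty$, any sufficiently large $N$ satisfies $\bbd^N_a > \deg(a)\deg(b) = \deg(\mfa^N)\deg(b)$.

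The main step is (iii) $\Rightarrow$ (ii), and this is where I would concentrate the argument. The key idea is to exploit the identity $a(b(\x)) = \x$ together with the degree gap hypothesis. Decomposing $a = \mfa^N + (a - \mfa^N)$ and substituting $b(\x)$, which has no constant term, one obtains
\[
\mfa^N(b(\x)) - \x = -(a - \mfa^N)(b(\x)).
\]
The right-hand side is a formal power series all of whose nonzero monomials have degree $\geq \bbd^N_a$, because every monomial in $a - \mfa^N$ has degree $\geq \bbd^N_a$ and substitution of $b$ (whose components are of degree $\geq 1$) cannot decrease the degree. The left-hand side is a polynomial of degree at most $\max\{1, \deg(\mfa^N)\deg(b)\}$. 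Under the hypothesis $\bbd^N_a > \deg(\mfa^N)\deg(b)$, these two degree ranges are disjoint, forcing both sides to vanish; in particular $\mfa^N(b(\x)) = \x$. Composing on the right with $a$ and using $b \circ a = \x$ yields $\mfa^N = a$, which is (ii).

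The main obstacle is bookkeeping in the (iii) $\Rightarrow$ (ii) step: one must verify that the support of $(a - \mfa^N)(b(\x))$ really does lie in degrees $\geq \bbd^N_a$ (which uses $b \in (\C\fax_+)^g$, so that substitution does not lower degree) and that the trivial edge cases (e.g.\ $\mfa^N = 0$ or $\deg(\mfa^N)\deg(b) = 0$) are consistent with the degree-gap conclusion. No deeper ideas are needed beyond these observations.
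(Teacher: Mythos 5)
Your proposal is correct and follows essentially the same route as the paper: the equivalence of (i) and (ii) via the degree‑agreement properties of Lemma~\ref{lem:ak and bbd} together with $\bbd^k_a\to\infty$, and the key implication from (iii) by substituting $b$ into the fixed‑point identity, using the degree gap to force $\mfa^N(b(\x))=\x$, and composing with $a$. The only cosmetic difference is that you work with $a-\mfa^N$ directly, whereas the paper phrases the same series as $\alpha^N(b(\x))[\x]$ via the identity $\bba^{\circ N}(\x)[a(\x)]=a(\x)$.
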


\begin{proof}
	Recall from Lemma~\ref{lem:ak and bbd} that $\lim_{k\to\infty} \mfa^k = a$, $\bba(\x)[a(\x)] = a(\x)$
	and $\bbd^k_a$ is either always strictly increasing or is strictly increasing until it becomes constant at infinity.
	We note $\bba^{\circ k+1}(\x)[a(\x)] = \bba^{\circ k}(\x)[\bba(\x)[a(\x)]] = \bba^{\circ k}(\x)[a(\x)]$, implying 
	\begin{equation}
		\label{eq:bbak of a is a}
		\bba^{\circ k}(\x)[a(\x)] = a(\x)
	\end{equation}
	for all $k\geq 1$.
	Next, composing with $b(\x)$ yields $\bba^{\circ k}(b(\x))[a(b(\x))] = a(b(\x)) = \x$.
	
	$\ref*{it:a gap item 1}\Rightarrow \ref*{it:a gap item 2}$. Suppose $a$ is a polynomial.
	By Item (ii) in Lemma~\ref{lem:ak and bbd}, if $k\geq \deg(a)$, then $\mathbbm{d}^k_a>\deg(a)$.
	Hence 
	\[
		a(\x) = \bba^{\circ k}(\x)[a(\x)] = \mfa^k(\x) + \alpha^k(\x)[a(\x)].
	\]
	However, by the definition of $\alpha^k$, the minimum possible length of any word appearing in 
	$\alpha^k$ is $\bbd_a^k>\deg(a)$.
	Thus 
	$\alpha^k(\x)[a(\x)] = 0$ and $\mfa^k = a$.
	
	$\ref*{it:a gap item 2}\Rightarrow \ref*{it:a gap item 3}$. Suppose $a = \mfa^m$ for some $m\in\Z^+$.
	If $n\geq m$ then items (iii) and (iv) in Lemma~\ref{lem:ak and bbd} imply $\mfa^n-\mfa^m$ contains no monomials of length
	less than or equal to $\deg(\mfa^m)$.
	However, $\mfa^m = a$, hence we must have $\mfa^m=\mfa^n=a$, for all $n\geq m$.
	Thus, the sequence $\deg(\mfa^n)\deg(b)$ is constant for $n\geq m$.
	On the other hand, $\bbd^n_a$ is either always strictly increasing or is strictly increasing until it becomes constant at infinity.
	Therefore, there is some $N$ such that $\bbd^N_a > \deg(a)\deg(b) = \deg(\mfa^N)\deg(b)$.
	
%	If $n\geq m$ then we know by item (iii) in Lemma~\ref{lem:ak and bbd} that the coefficients of
%	$\mfa^m$ and $\mfa^n$ agree on monomials of length less than $\bbd^m_a>\deg(\mfa^m)$.
%	However, we also know that the coefficients of $a$ and $\mfa^n$ agree on monomials of length less than $\bbd^n_a$.
%	Thus, the coefficients of $\mfa^m=a$ and $\mfa^n$ agree on monomials of length less than $\bbd^n_a>\bbd^m_a>\deg(a)$,
%	implying $\mfa^n=\mfa^m=a$ for all $n\geq m$.
%	Choosing $N=\bbd^m_a\deg(b)>\deg(a)\deg(b)$ we get that $\mfa^N = a$ and therefore
%	$\bbd^N_a>N>\deg(a)\deg(b) = \deg(\mfa^N)\deg(b)$.
	
	(iii)$\Rightarrow$(i). Suppose there is some $N$ such that $\bbd_a^N>\deg(\mfa^N)\deg(b)$.
	Substituting $b(\x)$ for $\x$ in \eqref{eq:bbak of a is a},
	\[
		\x=\bba^{\circ N}(b(\x))[\x] = \mfa^N(b(\x)) + \alpha^N(b(\x))[\x].
	\]
	However, 
	\[
		\deg\left(\mfa^N\left(b(\x)\right)\right)\leq\deg\left(\mfa^N\right)\deg(b)<\bbd_q^N,
	\]
	and the minimum degree of any monomial appearing in $\alpha^N(b(\x))[\x]$ is greater than $\bbd^N_q$, 
	implying $\alpha^N(b(\x))[\x] = 0$.
	Thus $\mfa^N(b(\x))=\x$, therefore $a = \mfa^N$ is a polynomial.
\end{proof}

\begin{remark}
	\label{rem:q gaps ok}
	Suppose $p\in (\C\fax_+)^g$ with $J_p,J_p^{-1}\in M_g(\C\fax)$.
	Let $\bbq$ be the auxiliary inverse of $p$ and let $q$ be the compositional inverse of $p$.
	Recall that since $\bbq$ is a polynomial and $\bbd_z(\bbq)>1$, $\bbq(\x)[\z]$ is a proper algebraic polynomial
	and $q$ is the unique algebraic function such that $\bbq(\x)[q(\x)] = q(\x)$.
	Thus Lemma~\ref{lem:a gaps} applies to $\bbq$ and $q$.
\end{remark}

It should be noted that if $a = \mfa^N$ for some $N$ then we cannot conclude $\alpha^N =0$.
Example \ref{ex:twisted comp} describes a bijective free polynomial $p$ with a free polynomial inverse $q$, such that
$\bbq^{\circ k}(\x)[\z]\neq q(\x)$, i.e. $r^k(\x)[\z]\neq 0$ for any $k\geq 1$.

\begin{example}
	\label{ex:twisted comp}
	Let $p^1,p^2\in \C\fralg{x_1,x_2}$ with $p^1 = (x_1,x_2+x_1^2)$, and $p^2 = (x_1+x_2^2,x_2).$
	Both are bijective with $q^1 = (x_1,x_2-x_1^2)$ and $q^2 = (x_1-x_2^2,x_2)$	as their respective inverses.
	Their composition $p = p^1\circ p^2 = (x_1+x_2^2,x_2+(x_1+x_2^2)^2)$ has inverse $q = q^2\circ q^1 = (x_1-(x_2-x_1^2)^2,x_2-x_1^2).$

	Since 
	\[
		p^1 = \bpm x_1 & x_2 \epm
			\left(\begin{smallmatrix} 1 & -x_1\\ 0 & 1 \end{smallmatrix}\right)^{-1} \quad \text{ and } \quad 
		p^2 = \bpm x_1 & x_2 \epm
			\left(\begin{smallmatrix} 1 & 0\\ -x_2 & 1 \end{smallmatrix}\right)^{-1},
	\]
	we have,
	\begin{align*}
		p &= \bpm x_1 & x_2 \epm \left(\begin{smallmatrix} 1 & 0\\ -x_2 & 1 \end{smallmatrix}\right)^{-1} 
			\left(\begin{smallmatrix} 1 & -(p^2)_1\\ 0 & 1 \end{smallmatrix}\right)^{-1} 
			= \bpm x_1 & x_2 \epm \left(\begin{smallmatrix} 1 & x_1+x_2^2 \\ x_2 & 1+x_2(x_1+x_2^2) \end{smallmatrix}\right),
	\end{align*}
	and
	\begin{align*}
		\bbq(\x)[\z] &= \bpm x_1 & x_2 \epm\left(\begin{smallmatrix} 1 & -(z_1+z_2^2)\\ 0 & 1 \end{smallmatrix}\right)
		\left(\begin{smallmatrix} 1 & 0\\ -z_2 & 1 \end{smallmatrix}\right)\\
		&=\left(x_1+x_1(z_1+z_2^2)z_2-x_2z_2  ,  x_2-x_1(z_1+z_2^2)\right).
	\end{align*}
	In this case a gap between $\mfq^k$ and $r^k(\x)[\z]$ forms rather quickly and the true
	inverse is extracted quite easily.
	However each iterate of $\bbq^k$ will have a $z$-term.
\end{example}

%
%    ######  ##     ## ########   ######  ########  ######  ######## ####  #######  ##    ## 
%   ##    ## ##     ## ##     ## ##    ## ##       ##    ##    ##     ##  ##     ## ###   ## 
%   ##       ##     ## ##     ## ##       ##       ##          ##     ##  ##     ## ####  ## 
%    ######  ##     ## ########   ######  ######   ##          ##     ##  ##     ## ## ## ## 
%         ## ##     ## ##     ##       ## ##       ##          ##     ##  ##     ## ##  #### 
%   ##    ## ##     ## ##     ## ##    ## ##       ##    ##    ##     ##  ##     ## ##   ### 
%    ######   #######  ########   ######  ########  ######     ##    ####  #######  ##    ##
%
\subsection{Free derivatives and scions}
	\label{ssec:scions}
We now introduce the formal directional derivative as was done in \cite{Pas14} and similarly 
in \cite{HKM12} and \cite{HKM11}.

\begin{definition}
	\label{def:D def}
	Let $\bby = \set{y_1,\dots,y_g}$ be a set of noncommuting indeterminates distinct from $\bbx$ and let
	$\y = (y_1,\dots,y_g)$ be considered as a row vector.
	We define the \dfn{free derivative} $D:\C\fpsx\to\C\fps{\bbx\cup\bby}$ by its action on monomials and then extend it linearly and continuously.
	Define
	\[
		Dx_i(\x)[\y] = y_i
	\]
	and require
	\begin{enumerate}[label=(\roman*)]
		\item $D(k\eta+\mu)(\x)[\y] = k D\eta(\x)[\y]+D\mu(\x)[\y]$;
		\item $D(\eta\mu)(\x)[\y] = D\eta(\x)[\y]\mu(\x)+\eta(\x) D\mu(\x)[\y]$,
%		\item $D(\eta\circ \nu) (\x)[\y] = D\eta(\nu(\x))[(D\nu_1(\x)[\y],\dots,D\nu_g(\x)[\y])]$,
	\end{enumerate}
	for all formal power series $\eta,\mu\in\C\fpsx$. Consequently, for all $\nu \in (\C\fpsx_+)^g$ we have
	\[
		D(\eta\circ \nu) (\x)[\y] = D\eta(\nu(\x))[(D\nu_1(\x)[\y],\dots,D\nu_g(\x)[\y])].
	\]
\end{definition}

Observe $D\eta(\x)[\y]$ is linear in $\y$, that is, $D\eta(\x)[k\y+\z] = k D\eta(\x)[\y] + D\eta(\x)[\z]$.
The linearity of the free derivative allows us to define $D$ on matrices of formal power series.
If $A\in M_{m\times n}(\C\fpsx)$ then define
$D:M_{m\times n}(\C\fpsx)\to M_{m\times n}(\C\fps{\bbx\cup\bby})$ by
\[
	DA(\x)[\y] = (DA_{i,j}(\x)[\y])_{i,j=1}^{m,n}.
\]
In particular $D$ extends to row vectors in the obvious way.

\begin{remark}
	The derivative in free analysis is defined below, and is almost a pure matrix result.
	Suppose $\mathcal{U}$ is a free domain (hence open) and $\eta:\mathcal{U}\to M(\C)^g$ is an analytic free map.
	For any small enough $H\in M(\C)^g$,
	\begin{equation}
		\label{eq:D free}
		\eta\bpm X & H \\ 0 & X \epm = \bpm \eta(X) & D\eta(X)[H]\\ 0 & \eta(X) \epm.
	\end{equation}
	
	It turns out, there is a strong connection between the free derivative in \eqref{eq:D free} and the formal
	power series derivative in Definition \ref{def:D def} (justifying the redundant use of $D$).
	
	If $\lambda:M(\C)^g\to M(\C)^g$ is a free analytic mapping, $\Lambda\in (\C\fpsx)^g$ is a formal power series that
	converges on $M(\C)^g$, and $\Lambda(X) = \lambda(X)$ for all $X\in M(\C)^g$,
	then $D\Lambda\in(\C\fps{\bbx\cup\bby})^{g}$ converges on $M(\C)^{2g}$ and 
	$D\lambda(X)[Y] = D\Lambda(X)[Y]$ for all $X,Y\in M(\C)^g$.
	
	To see this, let $\Lambda = \sum_{m=0}^\infty \sum_{\abs{w}=m} L_w w$ and
	$\Lambda_N = \sum_{m=0}^N \sum_{\abs{w}=m} L_w w$.
	Since $\Lambda_N\in (\C\fax)^g$, $D\Lambda_N\in (\C\fralg{\bbx\cup\bby})^g$, thus $D\Lambda_N(X)[Y]$ 
	exists for all $(X,Y)\in M(\C)^{2g}$.
	Let $Z = \left(\begin{smallmatrix}X & Y\\ 0 & X\end{smallmatrix}\right)$.
	By Proposition~6 in \cite{Pas14},
	\[
		\Lambda_N(Z) =\Lambda_N\bpm X & Y\\ 0 & X \epm = \bpm \Lambda_N(X) & D\Lambda_N(X)[Y]\\ 0 & \Lambda_N(X)\epm.
	\]
	The sequence of polynomials $(D\Lambda_N)$ converges to $D\Lambda$ in the metric topology on $(\C\fps{\bbx\cup\bby})^g$,
	thus $D\Lambda_N(X)[Y]$ converges to $D\Lambda(X)[Y]$, since $\Lambda_N(Z)$ converges to $\Lambda(Z)$.
	Hence,
	\[
		\Lambda(Z) =\Lambda\bpm X & Y\\ 0 & X \epm = \bpm \Lambda(X) & D\Lambda(X)[Y]\\ 0 & \Lambda(X)\epm,
	\]
	and $D\Lambda(X)[Y] = D\lambda(X)[Y]$.
	
	Thus, to find the derivative of a free analytic function with a formal power series, it is sufficient to find the
	derivative of the formal power series and then evaluate where desired.
\end{remark}

\begin{example}
	We present a few formal power series and their corresponding derivatives.
	If $p(x_1,x_2) = x_1x_2 - x_2x_1$ then $Dp(x_1,x_2)[y_1,y_2] = y_1x_2 + x_1y_2 - y_2x_1 - x_2y_1.$
	Next, if $r(x_1) = (1-x_1)^{-1}$ then $Dr(x_1)[y_1] = (1-x_1)^{-1}y_1(1-x_1)^{-1}$.
	Finally, if $s^1(x_1,x_2) = (x_1,x_2+x_1^2)$ and $s^2(x_1,x_2) = (x_2,x_1+x_2)$ then
	\[
		Ds^1(x_1,x_2)[y_1,y_2] = (y_1,y_2+x_1y_1+y_1x_1), \quad Ds^2(x_1,x_2)[y_1,y_2] = (y_2,y_1+y_2)
	\]
	and
	\begin{align*}
		D(s^1\circ s^2)(x_1,x_2)[y_1,y_2] &= Ds^1(s^2(x_1,x_2))[Ds^2(x_1,x_2)[y_1,y_2]]\\
			&= (y_2,(y_1+y_2)+x_2y_2+y_2x_2).
	\end{align*}
\end{example}

Before proceeding with our investigation of the free derivative, we stop to quickly prove the 
	implicit function theorem for nc formal power series.
For an analytic approach to the implicit function theorem for $M(\C)^g$ see \cite{AgMc16}.

\begin{definition}
	Suppose $\bbz = \set{z_1,\dots,z_h}$ and $f(\x,\z) \in (\C\fps{\bbx\cup\bbz})^h$.
	Define
	\[
		\frac{\partial f}{\partial \z} = \bpm Df_i(\x,\z)[0,e_j] \epm_{i,j=1}^h \in M_h(\C\fps{\bbx\cup\bbz}),
	\]
	where $e_j$ is the standard vector with a $1$ in the $i^{\text{th}}$ position and $0$ elsewhere.
\end{definition}

\THMfreeIFT

\begin{proof}
	Since $f(0,0) = 0$, we see that $f(\x,\z)$ has no constant terms.
	By composing with an appropriate change of variables, we may assume ${\partial f}/{\partial \z}(0,0) = I_h$.
	Hence, the coefficient of each $z_i$ term in $f_j$ is $\delta_{i,j}$, the Kronecker delta.
	Set $\hat{f}(\x)[\z] = \z - f(\x,\z)$ and note $\hat{f}$ satisfies the conditions of Lemma \ref{lem:ak and bbd}.
	Thus, there exists a unique $\mathfrak{g}\in (\C\fpsx)^h$ such that 
	$\mathfrak{g}(0) = 0$ and $\hat{f}(\x)[\mathfrak{g}(\x)] = \mathfrak{g}(\x)$.
	Finally, since $f(\x,\z) = \z - \hat{f}(\x)[\z]$,
	\begin{equation}
		\label{eq:Implicit Func eq}
		f(\x,\mathfrak{g}(\x)) = \mathfrak{g}(\x) - \hat{f}(\x)[\mathfrak{g}(\x)] = \mathfrak{g}(\x) - \mathfrak{g}(\x) = 0,
	\end{equation}
	and the uniqueness of $\mathfrak{g}$ for $\hat{f}$ implies $\mathfrak{g}$ is the unique formal power series 
	satisfying both $\mathfrak{g}(0)=0$ and \eqref{eq:Implicit Func eq}.
\end{proof}

\begin{definition}
	Suppose $p\in (\C\fax)^g$.  
	We define the \dfn{scion} of $p$, $F\in(\C\fralg{\bbx\cup\bby})^{2g}$, by	$F(\x,\y) = (Dp(\y)[\x],\y)$.
	Furthermore, if we view $p$ as a free polynomial from $M(\C)^g$ to $M(\C)^g$, then $F:M(\C)^{2g}\to M(\C)^{2g}$
	is a free polynomial and $F(X,Y) = (Dp(Y)[X],Y)$.
\end{definition}

	Of particular importance is the fact that $F$ is $\x$-linear, that is, $F(\x+\z,\y) = F(\x,\y)+F(\z,\y)$.
	Moreover, $DF(\x,\y)[\z,\w]$ is automatically $\z$-linear.

\begin{proposition}
	\label{prop:p bij F bij}
	Suppose $p$ is a free polynomial.
	If $F$ is the scion of $p$, then $p$ is bijective if and only if $F$ is bijective.
\end{proposition}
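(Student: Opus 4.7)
The plan is to reduce both directions of this proposition to Pascoe's Free Jacobian Conjecture (Theorem~\ref{thm:Free Jacobian Conjecture}), specifically the equivalence of items (i) and (iii) therein, which states that $p$ is bijective if and only if for every positive integer $n$ and every $Y\in M_n(\C)^g$ the linear map $M_n(\C)^g\ni H\mapsto Dp(Y)[H]$ is non-singular.

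First I would unpack the structure of $F$. At each matrix level $n$, the scion $F[n]:M_n(\C)^{2g}\to M_n(\C)^{2g}$ sends $(X,Y)\mapsto (Dp(Y)[X],Y)$. Because the second coordinate is simply returned unchanged, $F[n]$ is bijective if and only if, for each fixed $Y\in M_n(\C)^g$, the map $X\mapsto Dp(Y)[X]$ is a bijection of $M_n(\C)^g$ onto itself. Since $F$ is $\x$-linear, this map is a linear endomorphism of the finite-dimensional space $M_n(\C)^g$, so bijectivity is equivalent to non-singularity.

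For the forward direction, assume $p$ is bijective. Theorem~\ref{thm:Free Jacobian Conjecture} gives that for each $n$ and each $Y\in M_n(\C)^g$ the linear map $X\mapsto Dp(Y)[X]$ is non-singular, hence bijective. By the observation above, $F[n]$ is bijective for every $n$, so $F$ is bijective. For the converse, assume $F$ is bijective, i.e. each $F[n]$ is bijective. Then for every $Y\in M_n(\C)^g$ the linear map $X\mapsto Dp(Y)[X]$ is bijective and therefore non-singular, so Theorem~\ref{thm:Free Jacobian Conjecture} yields the bijectivity of $p$.

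No step in this argument presents a genuine obstacle: the whole proof is essentially the observation that the block-triangular form of $F$ (constant in its second coordinate) turns levelwise bijectivity of $F$ into non-singularity of the free derivative at every point, which is exactly the criterion from Pascoe's theorem.
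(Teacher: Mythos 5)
Your proof is correct, and it takes a noticeably more economical route than the paper's. The paper also reduces everything to Theorem~\ref{thm:Free Jacobian Conjecture}, but it does so \emph{twice}: it computes the Jacobian matrix $J_F$, works out $DF(\x,\y)[\h,0]=(Dp(\y)[\h],0)$, observes that $DF(X,Y)[H,K]=0$ forces $K=0$, and concludes that $DF$ is pointwise non-singular exactly when $Dp$ is; Pascoe's theorem is then invoked for both $p$ and $F$ to convert these two non-singularity statements into the two bijectivity statements. You instead invoke Pascoe's theorem only for $p$, and handle $F$ by the elementary set-theoretic observation that $F[n](X,Y)=(Dp(Y)[X],Y)$ fixes its second coordinate and is linear in its first, so $F[n]$ is bijective precisely when each linear endomorphism $X\mapsto Dp(Y)[X]$ of the finite-dimensional space $M_n(\C)^g$ is non-singular --- which is verbatim item (i) of the theorem. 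This avoids computing $DF$ altogether and dispenses with the second application of the (deep) free Jacobian theorem; the paper's version, on the other hand, exhibits the block-triangular form of $J_F$, which it reuses later (e.g.\ in Lemma~\ref{lem:F poly inv}) to show the auxiliary inverse of $F$ is $\z$-affine linear, so that computation is not wasted there even if it is not needed for this proposition. Both arguments are sound; yours is the shorter proof of the statement as stated.
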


\begin{proof}
	Suppose $J_p$ is the Jacobian matrix of $p$.
	Since $p(\x) = \x J_p(\x)$, the Jacobian matrix of $F$ is given by
	\[
		J_F(\x,\y) = \bpm J_p(\y) & 0 \\ DJ_p(\y)[\x] & I \epm.
	\]
	We note that $F(\x,\y) = (\x,\y) J_F(\x,\y)$.
	Let $\h = (h_1,\dots,h_g)$ and $\k=(k_1,\dots,k_g)$ be $g$-tuples of noncommuting indeterminates treated as row vectors.
	Consider
	\begin{align*}
		DF&(\x,\y)[\h,0]\\
		&= (\h J_p(\y)+\x DJ_p(\y)[0]+0 DJ_p(\y)[\x]+\y D(DJ_p(\y)[\x])[\h,0],0)\\
		&= (\h J_p(\y) + 0 + 0 + \y DJ_p(\y)[\h],0)\\
		&= (Dp(\y)[\h],0).
	\end{align*}
	The motivation for why $D(DJ_p(\y)[\x])[\h,0] = DJ_p(\y)[\h]$ hinges on the $\x$-linearity of $DJ_p(\y)[\x]$.
	To clarify this point we demonstrate on a monomial $m(\x,\y) = \alpha(\y)x_i\beta(\y)$:
	\begin{align*}
		Dm(\x,\y)[\h,0] &= D\alpha(\y)[0] x_i \beta(\y) + \alpha(\y)h_i \beta(\y) + \alpha(\y)x_i D\beta(\y)[0]\\ 
		&= m(\h,\y).
	\end{align*}
		
	Recall $F_{g+i}(\x,\y) = y_{g+i}$ for $1\leq i\leq g$, hence 
	$DF_{g+i}(\x,\y)[\h,\k] = k_i$.
	In particular, if $X,Y,H,K\in M_n(\C)^g$ then $DF(X,Y)[H,K] = 0$ implies $K=0$.
	Thus $Dp(Y)[H] = 0$ if and only if $DF(X,Y)[H,K] = 0$. 
	Therefore, an application of Theorem~\ref{thm:Free Jacobian Conjecture} implies $p$ is bijective if and only if $F$ is bijective.
\end{proof}

\begin{lemma}
	\label{lem:F poly inv}
	Suppose $p$ is a bijective free polynomial with no constant term, $F$ is the scion of $p$, and
	$q$ and $G$ are the compositional inverses of $p$ and $F$ respectively.
	Let $\bbG(\x,\y)[\z,\w]$ be the auxiliary inverse of $F$.
	Then,
	\begin{enumerate}[label=(\roman*)]
		\item $\bbG(\x,\y)[\z,\y]$ satisfies the conditions of Lemma~\ref{lem:ak and bbd} with $G(\x,\y)$ as its unique solution;
		\item $\bbG(\x,\y)[\z,\y]$ is affine $\z$-linear;
		\item $G(\x,\y) = (Dq(p(\y))[\x],\y)$;
		\item if $q$ is a free polynomial then $G$ is a free polynomial and
			\[
				\deg(q)\leq \deg(G)\leq \deg(p)\deg(q).
			\]
	\end{enumerate}
\end{lemma}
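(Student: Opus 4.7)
The plan is to compute an explicit formula for $\bbG$ and read off all four items from it. First, since $p(\y)=\y J_p(\y)$, the product rule for $D$ gives $Dp(\y)[\x]=\x J_p(\y)+\y DJ_p(\y)[\x]$, exposing the block-triangular structure
\[
J_F(\x,\y) = \bpm J_p(\y) & 0 \\ DJ_p(\y)[\x] & I_g \epm.
\]
By Proposition~\ref{prop:p bij F bij} the scion $F$ is bijective, so Theorem~\ref{thm:PMI is poly} applied to $p$ gives $J_p^{-1}\in M_g(\C\fax)$; block inversion then yields
\[
J_F^{-1}(\z,\w)=\bpm J_p^{-1}(\w) & 0 \\ -DJ_p(\w)[\z]J_p^{-1}(\w) & I_g \epm,
\]
and therefore the auxiliary inverse is
\[
\bbG(\x,\y)[\z,\y]=\bigl(\x J_p^{-1}(\y)-\y DJ_p(\y)[\z]J_p^{-1}(\y),\;\y\bigr).
\]

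From this formula, item (ii) is immediate because $DJ_p(\y)[\z]$ is $\z$-linear by the definition of the free derivative. For item (i), I first observe that $F_{g+i}(\x,\y)=y_i$ forces $G_{g+i}(\x,\y)=y_i$, so the fixed-point equation $\bbG(\x,\y)[G(\x,\y)]=G(\x,\y)$ furnished by Proposition~\ref{prop:comp inv} collapses to a fixed-point equation involving only $\bbG(\x,\y)[\z,\y]$. From the explicit formula, this power series has no constant term (both blocks vanish at the origin), and every $\z$-containing monomial comes from $\y DJ_p(\y)[\z]J_p^{-1}(\y)$, which carries an explicit $\y$ prefactor and a $\z$-linear factor, so $\bbd_z\geq 2>1$. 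Lemma~\ref{lem:ak and bbd} then applies and identifies $G$ as its unique solution.

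For item (iii) I will verify directly that $H(\x,\y):=(Dq(p(\y))[\x],\y)$ is a compositional inverse of $F$. The chain rule from Definition~\ref{def:D def} applied to $p\circ q=\mathrm{id}$ gives $Dp(q(\u))[Dq(\u)[\x]]=\x$; specializing $\u=p(\y)$ and using $q(p(\y))=\y$ yields $Dp(\y)[Dq(p(\y))[\x]]=\x$, so $F(H(\x,\y))=(\x,\y)$, and the opposite composition is symmetric. Item (iv) is then a direct degree count on $G=(Dq(p(\y))[\x],\y)$: with $d=\deg(q)$ and $d_p=\deg(p)$, a top-degree monomial of $q$ produces via $Dq$ monomials of total length $d$ (one letter in $\x$, $d-1$ in $\y$), and substituting $p(\y)$ for the remaining $\y$'s scales the $\y$-part of the degree by $d_p$, giving degree $1+(d-1)d_p\leq d\,d_p=\deg(p)\deg(q)$. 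The matching lower bound $\deg(q)\leq\deg(G)$ follows because $d_p\geq 1$ makes the same construction produce a monomial in $G$ of degree at least $d$.

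I expect the main obstacle to be item (i): namely, reconciling the fact that $G$ is a $2g$-tuple while setting $\w=\y$ in $\bbG$ produces an object that Lemma~\ref{lem:ak and bbd} handles in its natural ``background $\bbx\cup\bby$, target $\bbz$'' sense. The bridge is the observation $G_{g+i}=y_i$, which reduces the $2g$-dimensional fixed-point problem to one for the first $g$ components only; once this reduction is made, items (ii)--(iv) follow immediately from the explicit formulas above.
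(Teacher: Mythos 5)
Your proposal is correct and follows essentially the same route as the paper's proof: block-inversion of $J_F$ to get the explicit affine $\z$-linear form of $\bbG$, the observation that $F_{g+i}(\x,\y)=y_i$ forces $G_{g+i}=y_i$ and justifies setting $\w=\y$, the chain-rule identification $G=(Dq(p(\y))[\x],\y)$, and the degree count. Your bound $\deg(G)\le 1+(\deg(q)-1)\deg(p)$ is in fact slightly sharper than the stated $\deg(p)\deg(q)$, and your direct verification that $(Dq(p(\y))[\x],\y)$ inverts $F$ is an equivalent variant of the paper's substitution of $G$ into the chain-rule identity.
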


\begin{proof}
%	Suppose $q$ is the inverse function of $p$, $G$ is the inverse function of $F$ and $\bbG$ is the auxiliary inverse of $F$.
	Since $\bbG$ is the compositional inverse of $F$, it automatically satisfies the conditions of Lemma~\ref{lem:ak and bbd}.
	Hence, $\bbG(\x,\y)[\z,\w]$ has $G(\x,\y)$ as its unique solution.
	
	Recall $F_{g+i}(\x,\y) = y_i$ for $1\leq i\leq g$. 
	It follows that $\bbG_{g+i}(\x,\y)[\z,\w] = y_i$ and thus $G_{g+i}(\x,\y) = y_i$.
	In particular, $\bbG(\x,\y)[\z,\y]$ still satisfies the conditions of Lemma~\ref{lem:ak and bbd}
	and $G(\x,\y)$ is the unique solution of $\bbG(\x,\y)[\z,\y]$.
	Thus, Lemma~\ref{lem:a gaps} applies to $\bbG(\x,\y)[\z,\y]$ and $G(\x,\y)$, justifying our use of 
	$\bbG(\x,\y)[\z,\y]$ in lieu of $\bbG(\x,\y)[\z,\w]$.

	To prove item (ii), let $J_p$ and $J_F$ be the Jacobian matrices of $p$ and $F$, respectively.
	Since both $p$ and $F$ have compositional inverses, Proposition~\ref{prop:comp inv} says $J_p^{-1}$ and $J_F^{-1}$ exist as
	matrices of formal power series.
	Hence, 
	\begin{align*}
		J_F^{-1}(\z,\w) &= \bpm J_p(\w) & 0 \\ DJ_p(\w)[\z] & I \epm^{-1}  = \bpm J_p(\w)^{-1} & 0 \\ -DJ_p(\w)[\z]J_p(\w)^{-1} & I \epm.
	\end{align*}
	Observe $\z$ only appears in a free derivative, so $J_F^{-1}(\z,\w)$ is affine $\z$-linear.
	Thus, $\bbG(\x,\y)[\z,\y] = (\x, \y)J_F(\z,\y)$ must also be affine $\z$-linear.

	For (iii), since $(q\circ p)(\x) = \x$, we have $\x = D(q\circ p)(\y)[\x] = Dq(p(\y))[Dp(\y)[\x]]$.
	Thus,
	\begin{equation}
		\label{eq:DqpF}
		(\x,\y) = (Dq(p(\y))[Dp(\y)[\x]],\y) = (Dq(p(\y))[F(\x,\y)],\y).
	\end{equation}
	Since $G_{g+i}(\x,\y) = y_i$, substituting $G(\x,\y)$ for $(\x,\y)$ into \eqref{eq:DqpF} yields
	\[
		G(\x,\y) = (Dq(p(\y))[F(G(\x,\y))],\y) = (Dq(p(\y))[\x],\y).
	\]

	Lastly, suppose $q$ is a free polynomial.
	It follows that $Dq$ is a free polynomial, hence $G$ is a free polynomial.
	Since $\deg(q)\leq \deg(Dq(p(\y))[\x])\leq \deg(q)\deg(p)$ we conclude
	\begin{align*}
		\deg(q)\leq \deg(G) \leq \deg(q)\deg(p).
		\tag*{$\square$}
	\end{align*}
\end{proof}

In a trivial sense, the degree bounds between $G$ and $q$ are not strict.
If $p(\x) =\x$ then $q(\x) = \x$, $F(\x,\y) = (\x,\y)$ and $G(\x,\y) = (\x,\y)$.
Hence, $\deg(q) = \deg(q)\deg(p) = \deg(G)$.

\begin{remark}
	\label{rem:bbG ok}
	We emphasize a point made in the proof of \ref{lem:F poly inv};
	since $G$ is the solution to both $\bbG(\x,\y)[\z,\y]$ and $\bbG(\x,\y)[\z,\w]$ we may use $\bbG(\x,\y)[\z,\y]$
	rather than $\bbG(\x,\y)[\z,\w]$.
\end{remark}

\begin{example}
	Let 
	\[
		p(\x) = (x_1,x_2-x_1^2) \quad  \text{ and } \quad  F(\x,\y) = (x_1,x_2-x_1y_1-y_1x_1,y_1,y_2).
	\]
	Hence 
	\[
		\bbq(\x)[\z] = (x_1,x_2+x_1z_1) \quad \text{ and } \quad D\bbq(\x,\z)[\h,\k] = (h_1,h_2+h_1z_1+x_1k_1)
	\]
	and $\mathbbm{G}(\x,\y)[\z,\y] = (D\bbq(p(\y),\y)[\x,\z],\y) = (x_1,x_2+x_1y_1+x_1z_1,y_1,y_2)$.
	Note $\bbq^{\circ 2}(\x)[\z] = (x_1,x_2+x_1^2) = q(\x)$ and 
	$\mathbbm{G}^{\circ 2}(\x,\y)[\z,\y] = (x_1,x_2+x_1y_1+x_1x_1,y_1,y_2)$.
	Lastly, $2 = \deg(q) = \deg(G)$, while $\deg(q)\deg(p) = 4$, so $\deg(q) = \deg(G) < \deg(q)\deg(p)$. 
\end{example}

Proposition~\ref{prop:p bij F bij} tells us that a polynomial, $p$, is bijective if and only if its scion, $F$, is bijective.
The scion is $\x$-affine linear, and its inverse function, $G$, is the unique algebraic solution to a proper algebraic
polynomial that is $\z$-affine linear.
We investigate precisely the formal power series that are generated by such $\z$-affine linear proper algebraic polynomials
in Section~\ref{sec:Hyporationals}.

\section{Degree bounds on nc rational maps}
	\label{sec:vals and rats}
In order to prove Theorem~\ref{thm:hyporat domain poly} we require results about how rational functions behave when evaluated
on matrices.
Using rational degrees on nc rational functions, we prove Proposition~\ref{prop:r deg bdd},
a result about the behavior of nc rational functions when they are evaluated on generic matrices.

%
%    ######  ##     ## ########   ######  ########  ######  ######## ####  #######  ##    ## 
%   ##    ## ##     ## ##     ## ##    ## ##       ##    ##    ##     ##  ##     ## ###   ## 
%   ##       ##     ## ##     ## ##       ##       ##          ##     ##  ##     ## ####  ## 
%    ######  ##     ## ########   ######  ######   ##          ##     ##  ##     ## ## ## ## 
%         ## ##     ## ##     ##       ## ##       ##          ##     ##  ##     ## ##  #### 
%   ##    ## ##     ## ##     ## ##    ## ##       ##    ##    ##     ##  ##     ## ##   ### 
%    ######   #######  ########   ######  ########  ######     ##    ####  #######  ##    ##
%
\subsection{Rational degree bounds}
\label{ssec:Vals and rat deg}

In this subsection we introduce topics from noncommutative algebra in order to prove a general principle; 
evaluating a noncommutative rational function $r$ on a tuple of matrices produces a matrix whose entries behave similarly to $r$.
A major obstacle in proving this principle is the fact that noncommutative rational functions 
cannot always be written as a fraction of polynomials.
However, by introducing a commuting indeterminate $t$ we are able to characterize the degree of a nc rational function and its evaluations
on matrices.

\begin{definition}
	\label{def:rat deg def}
	Suppose $U$ is a skew field containing $\C$ and $U[t]$ is the polynomial ring over $U$.
	We define the map $\deg_t:U[t]\to \Z\cup\set{-\infty}$ in the natural way;
	$\deg_t(0) = -\infty$ and if $r = r_0 + r_1 t + \dots + r_m t^m$, $r_m\neq0$, then $\deg_t(r) = m$.
	For any $r,s\in U[t]$,
	\begin{enumerate}[label=(\roman*)]
		\item $\deg_t(rs) = \deg_t(r)+\deg_t(s)$\footnote{
		Item (i) is true as long as $U$ is a domain ($U$ has no zero divisors). 
		If $U$ has zero divisors then $\deg_t(rs)\leq \deg_t(r)+\deg_t(s)$.
		},
		\item $\deg_t(x+y) \leq \max\set{\deg_t(x),\deg_t(y)}$.
	\end{enumerate}
	
	Theorem 2.1.15 in \cite{MR01} tells us $U[t] = U\otimes \C[t]$ is an Ore domain with a classical ring of quotients, $U(t)$.
	Hence, for any $r\in U(t)$ there exist $\alpha,\beta\in U[t]$ 	such that $r = \alpha \beta^{-1}$.
	
	We can uniquely extend $\deg_t$ to $U(t)$ (by Theorem 9.1 in \cite{Cohn95}) such that $\deg_t(r^{-1}) = -\deg_t(r)$, 
		for all $r\neq 0$.
	In particular, if $r = \alpha\beta^{-1}$ then $\deg_t(r) = \deg_t(\alpha) - \deg_t(\beta)$.
	We say $\deg_t:U(t)\to \Z\cup\set{\infty}$ is a \dfn{rational degree map}.
\end{definition}

%\begin{definition}
%	If $R$ is any ring, then a \dfn{valuation}\footnote{
%		In general, a valuation is a map $\nu: R\to \Gamma\cup\set{\infty}$ where $\Gamma$ is any ordered group.
%		We say a group $\Gamma$ with a total ordering $x\geq y$ is an ordered group if $x\geq y$, $\hat{x}\geq \hat{y}$
%		implies $x+\hat{x}\geq y+\hat{y}$.
%		Moreover, $\Gamma$ is augmented with the symbol $\infty$ to form a monoid such that $x+\infty = \infty = \infty + x$,
%		for all $x\in \Gamma\cup\set{\infty}$.
%	}
%	is a map 
%	$\nu:R\to \Z\cup\set{\infty}$ such that
%	\begin{enumerate}
%		\item $\nu(x)\in \Z\cup\set{\infty}$ and $\nu$ assumes at least two values,
%		\item $\nu(xy) = \nu(x)+\nu(y)$,
%		\item $\nu(x+y) \geq \min\set{\nu(x),\nu(y)}$.
%	\end{enumerate}
%	We say a valuation is {\bf proper}\index{valuation!proper} if $\nu^{-1}(\set{\infty}) = \set{0}$.
%\end{definition}

%If $\mathbb{S}$ is a skew field then $-\deg_t$ is a valuation on $\mathbb{S}[t]$ and there exists an extension to 
%$\nu_t$, a valuation on $\mathbb{S}(t)$.
%We expand upon this below.

The main result of subsection \ref{ssec:Vals and rat deg} is as follows.

\begin{restatable}{proposition}{PROPratdegbdd}
	\label{prop:r deg bdd}
	Suppose $\bbw = \set{w_1,\dots,w_h}$ is a collection of freely noncommuting indeterminates, $\w = (w_1,\dots, w_h)$,
	$t$ is a central indeterminate and $N\in \Z^+$.
	Let $r\in \C\skf{\bbw}$ be a nonzero rational function such that $r(t\w) = \alpha(\w)[t]\beta(\w)[t]^{-1}$, 
	where $\alpha,\beta\in \C\skf{\bbw}[t]$.
	Suppose
	\begin{enumerate}[label=(\roman*)]
		\item $\mathbb{S}$ is a field containing $\C$,
		\item $U\subset M_N(\mathbb{S})$ is a skew field generated by $u_1,\dots,u_h\in M_N(\mathbb{S})$, each $u_i\neq 0$.
	\end{enumerate}
	If $\deg_t$ is the rational degree map on $\mathbb{S}(t)$,
	then 
	\[
		\deg_t(r(tu_1,\dots,tu_h)_{i,j})\leq \deg_t(\alpha(\w)[t])\in\Z,
	\]
	for all $1\leq i,j\leq N$, whenever $r(u_1,\dots,u_h)$ is defined.
\end{restatable}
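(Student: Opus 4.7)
The plan is to evaluate the given presentation of $r(t\w)$ at $\w=u$, recognize the result as an element of the skew field $U(t)$, and then transfer a rational-degree bound from $U(t)$ to commutative-degree bounds on the matrix entries in $M_N(\mathbb{S}(t))$.

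First, since $r(u_1,\dots,u_h)$ is defined and $U$ is the skew field generated by $u_1,\dots,u_h$, all rational coefficients of $\alpha(\w)[t]$ and $\beta(\w)[t]$ evaluate inside $U$, so $\alpha(u)[t],\beta(u)[t]\in U[t]$. The identity $r(t\w) = \alpha(\w)[t]\beta(\w)[t]^{-1}$ specializes to $r(tu)=\alpha(u)[t]\beta(u)[t]^{-1}$ in the Ore quotient $U(t)$ of $U[t]$. Substitution can only lower the $t$-degree through coefficient cancellation, so $\deg_t(\alpha(u)[t])\leq \deg_t(\alpha(\w)[t])$. Moreover $\beta(u)[t]$ is a nonzero element of $U[t]$, hence $\deg_t(\beta(u)[t])\geq 0$, and the rational degree on $U(t)$ yields
\[
  \deg_t(r(tu)) \;=\; \deg_t(\alpha(u)[t]) - \deg_t(\beta(u)[t]) \;\leq\; \deg_t(\alpha(\w)[t]).
\]

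The next step is a key lemma: for every $a\in U(t)$, viewed inside $M_N(\mathbb{S}(t))$ via $U\subset M_N(\mathbb{S})$, each entry satisfies $\deg_t(a_{i,j})\leq \deg_t(a)$. Because $U$ is a skew field, the leading coefficient of any nonzero $q\in U[t]$ is invertible in $U$, so $U[t]$ admits right Euclidean division. Writing $a = pq^{-1}$ and dividing $p$ by $q$ produces $a = s + \tilde{r}\,q^{-1}$ with $s\in U[t]$, $\deg_t(s)\leq \deg_t(a)$, and $\deg_t(\tilde{r})<\deg_t(q)$. The polynomial summand $s$ contributes entries in $\mathbb{S}[t]$ of $t$-degree at most $\deg_t(a)$. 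For the proper fraction $\tilde{r}\,q^{-1}$, a formal Laurent expansion of $q^{-1}$ in the variable $1/t$ with coefficients in $U$, obtained from the geometric series applied to $(1+q_n^{-1}q_{n-1}t^{-1}+\cdots)^{-1}$, exhibits $\tilde{r}\,q^{-1}$ as $O\bigl(t^{\deg_t(\tilde{r})-\deg_t(q)}\bigr)$ as $t\to\infty$, so its entries have $\deg_t\leq \deg_t(a)$ in $\mathbb{S}(t)$. Combining the lemma with the $U(t)$-bound above gives $\deg_t(r(tu)_{i,j})\leq \deg_t(\alpha(\w)[t])$.

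The main obstacle is the key lemma. Without the skew-field hypothesis on $U$, the statement fails: for example, if $u\in M_N(\mathbb{S})$ is a nonzero nilpotent then $(I-tu)^{-1}$ has polynomial growth in $t$, contradicting any naive expansion at $t=\infty$. The skew-field assumption on $U$ is exactly what guarantees that leading $t$-coefficients of elements of $U[t]$ are invertible, making Euclidean division in $U[t]$ available and ensuring that the formal expansion of $q^{-1}$ at $\infty$ converges in the $(1/t)$-adic sense. Conceptually this is the assertion that the $(1/t)$-adic valuation on $\mathbb{S}(t)$ extends uniquely to the finite-dimensional skew subfield $U(t)$, and that this extension reads off as the entry-wise $(1/t)$-adic valuation under the matrix representation.
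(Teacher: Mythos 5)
Your proposal is correct, and it reaches the same two outer reductions as the paper (specialize $r(t\w)=\alpha(\w)[t]\beta(\w)[t]^{-1}$ at $\w=\u$ inside $U(t)$, note $\deg_t(\alpha(\u)[t])\leq\deg_t(\alpha(\w)[t])$ and $\deg_t(\beta(\u)[t])\geq 0$), but your key lemma is handled by a genuinely different mechanism. The paper controls the entries of $\beta(\u)[t]^{-1}$ via the adjugate--determinant formula: since $U$ is a skew field the leading coefficient $k_m$ of a nonzero $\kappa\in U[t]$ is invertible, so $\det(\kappa(\u)[t])$ has exact $t$-degree $N\deg_t(\kappa)$ while each adjugate entry has degree at most $(N-1)\deg_t(\kappa)$, giving $\deg_t(\kappa(\u)[t]^{-1}_{i,j})\leq 0$ entrywise; the product bound with $\alpha(\u)[t]$ then finishes. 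You instead perform right Euclidean division in $U[t]$ (again available precisely because leading coefficients are units) and bound the proper-fraction remainder by its formal Laurent expansion at $t=\infty$, i.e.\ by showing the $(1/t)$-adic valuation of $U(t)$ is computed entrywise under the embedding into $M_N(\mathbb{S}((1/t)))$. Your route yields the slightly sharper entrywise bound $\deg_t(\alpha(\u)[t])-\deg_t(\beta(\u)[t])$ and isolates the valuation-theoretic content more cleanly, whereas the paper's determinant computation is more elementary and is reused verbatim in its subsequent remark (the explicit $2\times 2$ examples showing the bound fails when the leading coefficient is singular, which is the same failure mode as your nilpotent example). Both arguments locate the skew-field hypothesis in exactly the same place, so the proofs are logically parallel even though the linear algebra differs.
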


Of particular importance in Proposition~\ref{prop:r deg bdd} is that $\deg_t(\alpha(\w)[t])$ is independent of $N$,
hence it applies quite nicely to free functions.

\begin{remark}
	\label{rem:algebra defs}
	The following definitions will be familiar to an algebraist but perhaps not to an analyst.

	If $R$ is any commutative integral domain, then the field of fractions of $R$ is the 
	smallest field in which $R$ can be embedded.
	Every integral domain has a field of fractions.
	
	Next, a ring $D$ is said to be a noncommutative domain if it has no zero divisors, i.e. if $a,b\in D$ such that $ab = 0$ then
	either $a=0$ or $b=0$.
	If, in addition, every nonzero element of $D$ has a multiplicative inverse then $D$ is said to be a skew field.

	Let $R$ be a noncommutative domain and let $S$ be the set of all the nonzero elements of $R$.
	We say $R$ is a right Ore domain if for every $r\in R$ and $s\in S$, $rS\cap sR\neq \varnothing$.
	If $R$ is a right Ore domain, then there is a unique (up to $R$-isomorphism) skew field $D$ containing $R$ as a subring such that every 
	element of $D$ has the form $rs^{-1}$, for $s,r\in R$ and $s\neq 0$.
	In this case, the skew field $D$ is called the classical ring of quotients of $R$, and it is unique up to isomorphism.
\end{remark}

%\begin{definition}
%	\label{def:val def}
%	Suppose $\mathbb{S}$ is any skew-field containing $\C$ and let $t$ be a \dfn{central indeterminate}, that is,
%	$t$ commutes with every member of $\mathbb{S}$.
%	Let $\alpha\in \mathbb{S}[t]\setminus \set{0}$ with $\alpha[t] = a_0+\dots+a_K t^K$, $a_K\neq 0$.
%	Define $\deg_t:\mathbb{S}[t]\to \Z\cup\set{-\infty}$ by $\deg_t(\alpha[t]) = K$ and $\deg_t(0) = -\infty$.
%	It is routine to verify $-\deg_t$ is a proper valuation on $\mathbb{S}[t]$.
%	
%	Moreover, Theorem 2.1.15 in \cite{MR01} tells us $\mathbb{S}[t] = \mathbb{S}\otimes \C[t]$ is an Ore domain 
%	with a classical ring of quotients, $\mathbb{S}(t)$.
%	By Theorem 9.1 in \cite{Cohn95} we can uniquely extend $-\deg_t$ to a valuation 
%	$\nu_t:\mathbb{S}(t)\to\Z\cup\set{\infty}$.
%	We call such a valuation a \bf{rational degree valuation}\index{valuation!rational degree}.
%\end{definition}

\begin{lemma}
	\label{lem:min val bdd}
	Suppose $U$ is any skew field, $v\in U[t]^n$ is a row vector of polynomials and 
	$M\in M_n(U(t))$ is an $n\times n$ matrix.
	If $\deg_t$ is a rational degree map on $U(t)$, then
	\[
		\deg_t((vM)_i)\leq \max_k \set{\deg_t(v_k)}+\max_k\set{\deg_t(M_{k,i})},
	\]
	for each $1\leq i\leq n$.
	
	Moreover, if $\deg_t(v_i)\leq \delta$ and $\deg_t(M_{i,j})\leq \Delta$ for $1\leq i,j\leq n$ then
	\[
		\max_{1\leq k\leq n}\set{\deg_t\big((vM)_k\big)} \leq \delta+\Delta.
	\]
\end{lemma}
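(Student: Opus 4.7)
The plan is to unpack the matrix product componentwise and then apply the two defining properties of the rational degree map $\deg_t$ --- namely, multiplicativity $\deg_t(rs)=\deg_t(r)+\deg_t(s)$ and the sub-additivity $\deg_t(r+s)\le\max\{\deg_t(r),\deg_t(s)\}$ --- both of which are listed in Definition~\ref{def:rat deg def} and hold on $U(t)$ by the uniqueness of the extension from $U[t]$.

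First, I would write out the $i$-th entry of $vM$ explicitly as
\[
(vM)_i \;=\; \sum_{k=1}^{n} v_k\, M_{k,i}.
\]
Applying sub-additivity of $\deg_t$ to the sum gives
\[
\deg_t\big((vM)_i\big) \;\le\; \max_{1\le k\le n}\deg_t\big(v_k M_{k,i}\big),
\]
and then applying multiplicativity to each term yields
\[
\deg_t\big((vM)_i\big) \;\le\; \max_{1\le k\le n}\big(\deg_t(v_k)+\deg_t(M_{k,i})\big).
\]
Finally, replacing each summand by the individual maxima (which can only make the bound larger) gives the first inequality:
\[
\deg_t\big((vM)_i\big) \;\le\; \max_{k}\deg_t(v_k)+\max_{k}\deg_t(M_{k,i}).
\]

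For the ``moreover'' part, I would simply observe that under the hypotheses $\deg_t(v_k)\le\delta$ and $\deg_t(M_{k,i})\le\Delta$ for all $k,i$, the right-hand side above is bounded by $\delta+\Delta$, uniformly in $i$, so taking the maximum over $i$ yields the claimed bound. There is no real obstacle here; the only mild point worth stating explicitly is that the properties of $\deg_t$ in Definition~\ref{def:rat deg def} are stated for $U[t]$ but extend to $U(t)$, which is precisely how $\deg_t$ is defined on the classical ring of quotients, so the inequalities continue to hold even though $M_{k,i}\in U(t)$ rather than $U[t]$.
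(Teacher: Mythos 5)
Your proposal is correct and follows essentially the same route as the paper: expand $(vM)_i=\sum_k v_kM_{k,i}$, apply sub-additivity to the sum and multiplicativity to each product, then pass to the maxima; the ``moreover'' part is the same uniform bound. No issues.
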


\begin{proof}
	Fix $1\leq i\leq g$. 
	Simply applying the properties of the degree map,
	\begin{align*}
		\deg_t((vM)_i) 
		&= \deg_t\left(\sum_k v_k M_{k,i}\right) \leq \max_k \set{\deg_t(v_k M_{k,i})}\\
		&= \max_k \set{\deg_t(v_k)+\deg_t(M_{k,i})}\\
%		&\leq \max_k \set{\deg_t(v_k)}+\max_k\set{\deg_t(M_{k,i})}\\
		&\leq \max_k\set{\deg_t(v_k)}+\max_k\set{\deg_t(M_{k,i})}.
	\end{align*}
	
	Next, suppose $\deg_t(v_i)\leq \delta$ and $\deg_t(M_{i,j})\leq \Delta$ for all $1\leq i,j\leq n$.
	Thus, $\max_k\set{\deg_t(v_k)}\leq \delta$ and $\max_{i,k}\set{\deg_t(M_{k,i})}\leq \Delta$.
	Finally, since $\deg_t((vM)_i)\leq \delta+\Delta$ for $1\leq i\leq n$, we conclude
	$\max_i\set{\deg_t((vM)_i)}\leq \delta+\Delta$.
\end{proof}

Let $\bbw = \set{w_1,\dots,w_h}$ be a finite collection of freely noncommuting indeterminates and let $\w = (w_1,\dots, w_h)$.
We recall a few facts about the construction of $\C\skf{\bbw}$, the algebra of noncommutative rational functions.
These results and definitions can be found in \cite{KVV16} and \cite{KlVo17}.

Let $\mathcal{R}_\C(\bbw)$ be the set of all noncommutative rational expressions over $\C$, i.e. all possible syntactically valid
combinations of elements in $\C$ and $\bbw$,
arithmetic operations (addition, multiplication, inversion) and parentheses.
For example, $w_1+w_1$, $w_1(w_2-w_1)^{-1}$ and $0^{-1}$ are syntactically valid combinations.
The \dfn{inversion height} of $\rho\in\mathcal{R}_\C(\bbw)$ is the maximum number of nested inverses in $\rho$.

The subset of $M(\C)^h$ at which $\rho$ is defined is denoted $\dom \rho$ and is called the \dfn{domain} of $\rho$.
We say $\rho\in\mathcal{R}_\C(\bbw)$ is \index{nondegenerate rational expression}{\bf {nondegenerate}}
if $\rho(A)$ is defined for some $A\in M(\C)^h$.
If $\rho_1,\rho_2$ are nondegenerate rational expressions then we say $\rho_1 \sim \rho_2$ if and only if  
$\rho_1(A) = \rho_2(A)$ for all  $A\in \dom \rho_1 \cap \dom \rho_2$.
This relation $\sim$ is an equivalence relation on the set of all nondegenerate rational expression in $\mathcal{R}_\C(\bbw)$.
We define $\C\skf{\bbw}$, the skew field of \dfn{noncommutative rational functions}, to be the set of equivalence classes of 
nondegenerate expression with respect to $\sim$.
If $r\in\C\skf{\bbw}$, then the domain of $r$, denoted $\dom r$, is defined as the union of the domains of all representatives of $r$
and if $A\in \dom r$ then $r(A) = \rho(A)$ for any representative $\rho\in \mathcal{R}_\C(\bbw)$ such that $A\in \dom \rho$.

\begin{remark}
	\label{rem:reg at 0 are rats}
	Both $\C\fralg{\bbw}$ and $\C_{\textup{rat}}\fps{\bbw}$ embed into $\C\skf{\bbw}$, and in fact, if $r\in\C\skf{u}$ is defined at
	$0$ then $r\in\C\fps{\bbw}$.
	Since every rational series (see Definition \ref{def:rational series}) is defined at $0$, we have that
	the rational series are exactly the nc rational functions defined at $0$.
\end{remark}

We now introduce a lemma that will be implicitly used throughout the rest of Section~\ref{sec:vals and rats}.

\begin{lemma}
	\label{lem:once referenced}
	Suppose $r\in\C\skf{\bbw}$.
	If $t$ is a central indeterminate, then $r(t\w)\in \C\skf{\bbw}(t)$.
\end{lemma}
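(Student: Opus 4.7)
The plan is to proceed by structural induction on a nondegenerate rational expression $\rho \in \mathcal{R}_\C(\bbw)$ representing $r$. The evaluation $r(t\w)$ will be defined by replacing each occurrence of $w_i$ in $\rho$ with $tw_i \in \C\skf{\bbw}[t] \subset \C\skf{\bbw}(t)$ and performing the arithmetic inside the skew field $\C\skf{\bbw}(t)$. The base cases are immediate: constants $c \in \C$ land in $\C \subset \C\skf{\bbw}(t)$, and variables become $tw_i \in \C\skf{\bbw}[t]$. Sums and products of elements of $\C\skf{\bbw}(t)$ stay in $\C\skf{\bbw}(t)$ because it is a skew field.

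The only nontrivial induction step is inversion. If $\rho = \sigma^{-1}$ with $\sigma$ nondegenerate, I need $\sigma(t\w) \neq 0$ in $\C\skf{\bbw}(t)$ in order to invert it. Since $\sigma$ is nondegenerate, there is a tuple $A = (A_1,\dots,A_h) \in M_N(\C)^h$ lying in the domain of $\sigma$ at which $\sigma(A)$ is defined and invertible. Because the domain of a rational expression is a Zariski open subset of each $M_n(\C)^h$ and $\sigma(A)$ is invertible at the single point $A$, the dilates $tA$ also lie in the domain of $\sigma$ for all but finitely many $t \in \C$. I would then show that specializing $\w \mapsto A$ in $\sigma(t\w)$ produces the matrix $\sigma(tA) \in M_N(\C(t))$. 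The determinant $\det \sigma(tA)$ is a nonzero element of $\C(t)$ because at $t = 1$ it equals $\det \sigma(A) \neq 0$. Hence $\sigma(tA)$ is nonzero in $M_N(\C(t))$, which forces $\sigma(t\w) \neq 0$ in $\C\skf{\bbw}(t)$, and consequently $\sigma(t\w)^{-1} \in \C\skf{\bbw}(t)$ as required.

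The main obstacle is making the specialization $\w \mapsto A$ rigorous on an element of $\C\skf{\bbw}(t)$, rather than merely on polynomials. The key point is to apply the induction hypothesis uniformly to all subexpressions of $\rho$: for each subexpression $\tau$, the element $\tau(t\w)$ lives in $\C\skf{\bbw}(t)$, and because $tA$ lies in $\dom \tau$ for a Zariski-dense set of $t$, evaluation commutes with the rational expression structure. Finally, well-definedness (independence of the chosen representative $\rho$) follows because any two nondegenerate representatives of $r$ must yield matrices $\rho_1(tA)$ and $\rho_2(tA)$ that agree for all but finitely many $t$ whenever $A, tA \in \dom \rho_1 \cap \dom \rho_2$, forcing equality of the corresponding elements of $\C\skf{\bbw}(t)$.
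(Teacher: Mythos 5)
Your proposal is correct and follows the same route as the paper, which disposes of this lemma in one line ("induction on the inversion height"); you simply supply the details of the inversion step that the paper omits. The key point you identify---using nondegeneracy to find $A$ with $\sigma(A)$ invertible and then observing that $\sigma(tA)$ is nonzero over $\C(t)$, so $\sigma(t\w)\neq 0$ in $\C\skf{\bbw}(t)$---is exactly what makes the induction go through.
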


\begin{proof}
	The proof follows quickly from induction on the inversion height.
\end{proof}

\begin{example}
	If $\rho$ is a rational function in commuting variables then $\rho$ can be written as a fraction of polynomials;
	$\rho = pq^{-1}$.
	Hence, it makes sense to talk about a rational degree map, $\deg(\rho) = \deg(p)-\deg(q)$.
	
	In the noncommutative case we cannot guarantee that a rational function $r$ can be written as a fraction of polynomials.
	However, the commuting indeterminate $t$ we introduce acts as a yardstick for the rational degree of $r$.
	Since $\deg_t(q(t)^{-1}) = -\deg_t(q(t))$ we can unpack a rational function by moving iteratively through the inversion heights.
	
	For example, if
	\[
		r(x_1,x_2) = x_1(1-x_1(1-x_2)^{-1}x_1)^{-1}
	\]
	then we can guess $\deg(x_1(1-x_2)^{-1}x_1) = 2-1$, $\deg((1-x_1(1-x_2)^{-1}x_1)^{-1}) = 1-2$ and,
	$\deg(x_1(1-x_1(1-x_2)^{-1}x_1)^{-1}) = 2 - 2$.
	Introducing the commuting $t$ we get
	\begin{align*}
		r(tx_1,tx_2) &= tx_1(1-tx_1(1-tx_2)^{-1}tx_1)^{-1}\\
		&= tx_1x_1^{-1}(x_1^{-1}-t^2x_1(1-tx_2)^{-1})^{-1}\\
		&= t(1-tx_2)(x_1^{-1}(1-tx_2)-t^2x_1)^{-1}\\
		&= (t-t^2x_2)(x_1^{-1}-tx_1^{-1}x_2-t^2x_1)^{-1},
	\end{align*}
	a fraction of polynomials in $t$ with coefficients in $\C\skf{\bbx}$.
	In fact, as we guessed, $\deg_t(r(t\x)) = 2-2 = 0$.
\end{example}

We are now in a position to prove Proposition~\ref{prop:r deg bdd}.
%Before proceeding to the proof of Proposition~\ref{prop:r deg bdd} we recall a few of the hypotheses:
%\begin{enumerate}
%	\item $\mathbb{S}$ is a field containing $\C$,
%	\item $s_1,\dots,s_h\in U\subset  M_N(\mathbb{S})$, each $s_i\neq 0$, and $U$ is a skew field.
%\end{enumerate}
%Furthermore, let $\nu_t$ be a rational degree valuation on $\mathbb{S}(t)$, that is, $\nu_t$ extends $-\deg_t$ from $\mathbb{S}[t]$
%to a valuation on $\mathbb{S}(t)$.

\PROPratdegbdd*

\begin{proof}
	Suppose $\u = (u_1,\dots,u_h)$ and $\kappa\in U[t]$ is a nonzero polynomial such that $\kappa(\u)[t] = \sum_{j=1}^m k_j(\u)t^j$ 
		with $k_m(\u)\neq 0$.
	Note 
	\[
		\deg_t(\kappa(\u)[t]) = \max_{1\leq i,j\leq N} \set{\deg_t(\kappa(\u)[t]_{i,j})} = m.
	\]
	We first show that $\deg_t(\det(\kappa(\u)[t])) = Nm$.
%	In this case, we have the expected $t$-degree of a polynomial in $U[t]$:
%	$\deg_t(\kappa(\u)[t]) = m$.
%	In fact, $\deg_t(\kappa(\u)[t]) = \max_{1\leq i,j\leq N} \set{\deg_t(\kappa(\u)[t]_{i,j})}$.
	
	From the definition of the determinant, $\deg_t(\det(\kappa(\u)[t]))\leq Nm$.
	In fact, $\det(\kappa(\u)[t])$ is a sum of products of $N$ entries of $\kappa(\u)[t]$.
	Hence, $\det(\kappa(\u)[t])\in \mathbb{S}[t]$ and its $t^{Nm}$ coefficient is exactly $\det(k_m(\u))$.
	Since $\mathbb{S}$ is a field, for any $A\in M_N(\mathbb{S})$, $\det(A)\neq 0$ if and only if $A$ is invertible.
	Next, $U$ being a skew field and $k_m\in U$ being nonzero imply $k_m$ is invertible, hence $\det(k_m) \neq 0$.
	Thus, $\deg_t(\det(\kappa(\u)[t])) = Nm$.
	That is,
	\begin{equation}
		\label{eq:det deg equal}
		\deg_t(\det(\kappa(\u)[t])) = N\deg_t(\kappa(\u)[t]).
	\end{equation}

	Next, we recall that the adjugate of any $N\times N$ matrix is a matrix of determinants of $(N-1)\times (N-1)$ sub-matrices.
	Hence, for all $1\leq i,j\leq N$, 
	\begin{equation}
		\label{eq:adj deg submultitplicative}
		\deg_t(\text{adj}(\kappa(\u)[t])_{i,j})\leq (N-1)\deg_t(\kappa(\u)[t]).
	\end{equation}
	Since $\kappa(\u)[t]^{-1} = \det(\kappa(\u)[t])^{-1}\text{adj}(\kappa(\u)[t])$, 
		Equations \eqref{eq:det deg equal} and \eqref{eq:adj deg submultitplicative} imply
	\begin{equation}
		\label{eq:deg of inverse of poly is negative}
		\deg_t(\kappa(\u)[t]^{-1}) = 
			\max_{i,j}\set{\deg_t\left(\frac{\text{adj}(\kappa(\u)[t])_{i,j}}{\det(\kappa(\u)[t])}\right)}
			\leq 0.
	\end{equation}

	Next, $r(t\w) = \alpha(\w)[t]\beta(\w)[t]^{-1}$ for some $\alpha,\beta\in \C\skf{\bbw}[t]$
	and assuming $r(\u)$ is defined, $ r(t\u)=\alpha(\u)[t]\beta(\u)[t]^{-1}$.
	Hence,
	\begin{align*}
		\deg_t( r(t\u)_{i,j})
		&= \deg_t\left(\sum_{\ell=1}^N\alpha(\u)[t]_{i,\ell}\left(\beta(\u)[t]^{-1}\right)_{\ell,j}\right)\\
		&\leq \max_{\ell}\set{\deg_t\left(\alpha(\u)[t]_{i,\ell}\left(\beta(\u)[t]^{-1}\right)_{\ell,j}\right)}\\
		&\leq \max_{\ell}\set{\deg_t\left(\alpha(\u)[t]_{i,\ell}\right)}
			+\max_{\ell}\set{\deg_t\left(\left(\beta(\u)[t]^{-1}\right)_{\ell,j}\right)}\\
		&\leq \deg_t(\alpha(\u)[t])
			+\deg_t\left(\beta(\u)[t]^{-1}\right)\\
		&\leq \deg_t(\alpha(\u)[t]).
	\end{align*}
	Where the last inequality uses \eqref{eq:deg of inverse of poly is negative}.
	Finally, $\deg_t(\alpha(\u)[t])\leq \deg_t(\alpha(\w)[t])$ implies
	\[
		\deg_t( r(t\u)_{i,j}) \leq \deg_t(\alpha(\u)[t]) \leq \deg_t(\alpha(\w)[t])
	\]
	for all $1\leq i,j\leq N$.
\end{proof}

It should be emphasized that \eqref{eq:det deg equal} is not true for any polynomial in $M_N(\mathbb{S})[t]$.
Rather, it holds true if and only if the leading coefficient is an invertible matrix.
For example, if
\[
	\lambda(t) = \bpm 1 & 0 \\ 0 & 1\epm+\bpm 1 & 0 \\ 0 & 0\epm t^2 + \bpm 0 & 1 \\ 1 & 0\epm t^3 +\bpm 0 & 0 \\ 0 & 1\epm t^4
		= \bpm 1+t^2 & t^3 \\ t^3 & 1+t^4 \epm
\]
then $\deg_t(\lambda(t)) = 4$ while $\deg_t(\det(\lambda(t))) = \deg_t(1+t^2+t^4) = 4 < 8$.
On the other hand, if
\[
	\lambda(t) = \bpm 1 & 0 \\ 0 & 1\epm+\bpm 0 & 1 \\ 1 & 0\epm t^2 +\bpm 1 & 0 \\ 0 & 1\epm t^4
		= \bpm 1+t^4 & t^2 \\ t^2 & 1+t^4 \epm
\]
then $\deg_t(\lambda(t)) = 4$ and $\deg_t(\det(\mu(t))) = \deg_t(1+t^4 + t^8) = 8$.

Proposition~\ref{prop:r deg bdd} gives credence to the notion that if $r$ is a noncommutative rational function, 
then $r(\bm{s})$ is a matrix of rational functions whose behavior is modeled by $r$.
In particular we will apply this idea to generic matrix algebras.

%\begin{lemma}
%	\label{lem:isom val}
%	Suppose $U$ and $\hat{U}$ are isomorphic skew fields and $\deg_t$ and $\hat{\deg}_t$ are the rational degree
%	maps (see Definition \ref{def:rat deg def}) on $U(t)$ and $\hat{U}(t)$, respectively.
%	If $\sigma:U(t)\to \hat{U}(t)$ is an isomorphism, then $\deg_t(p) = \hat{\deg}_t(\sigma(p))$ for
%	all $p\in U(t)$.
%\end{lemma}
%
%\begin{proof}
%	If $a\in S$ is nonzero, then $\sigma(a t^K) = \sigma(a) t^K$ and $\deg_t(a t^K) = -K = \hat{\deg}_t(\sigma(a) t^K)$.
%	Hence by linearity, if $\alpha[t]\in S[t]$ then $\deg_t(\alpha[t]) = \hat{\deg}_t(\sigma(\alpha[t]))$.
%	Thus, 
%	\begin{align*}
%		\deg_t(\alpha[t]\beta[t]^{-1}) &= \deg_t(\alpha[t])-\deg_t(\beta[t]) 
%		= \hat{\deg}_t(\sigma(\alpha[t]))-\hat{\deg}_t(\sigma(\beta[t]))\\
%		&= \hat{\deg}_t(\sigma(\alpha[t]\beta[t]^{-1})).
%	\end{align*}
%\end{proof}

%
%    ######  ##     ## ########   ######  ########  ######  ######## ####  #######  ##    ## 
%   ##    ## ##     ## ##     ## ##    ## ##       ##    ##    ##     ##  ##     ## ###   ## 
%   ##       ##     ## ##     ## ##       ##       ##          ##     ##  ##     ## ####  ## 
%    ######  ##     ## ########   ######  ######   ##          ##     ##  ##     ## ## ## ## 
%         ## ##     ## ##     ##       ## ##       ##          ##     ##  ##     ## ##  #### 
%   ##    ## ##     ## ##     ## ##    ## ##       ##    ##    ##     ##  ##     ## ##   ### 
%    ######   #######  ########   ######  ########  ######     ##    ####  #######  ##    ##
%
\subsection{Generic matrix algebras}
	\label{ssec:generic}
Suppose $n\in\Z^+$.
For each $i\in \Z^+$, $1\leq j\leq g$, and $1\leq k,\ell\leq n$, let $\xi^{(i),j}_{n,k,\ell}$ be a commuting indeterminate.
Next, for each $i\in\Z^+$, set $\xi^{(i)}_{n}=\{\xi^{(i),j}_{n, k,\ell}:1\leq j\leq g,1\leq k,\ell\leq n\}$.
If $i,\hat{\imath}\in\Z^+$, then the algebras $\C[\xi^{(i)}_{n}]$ and $\C[\xi^{(i)}_{n}\cup \xi^{(\hat{\imath})}_{n}]$ 
have fields of fractions $\C(\xi^{(i)}_{n})$ and $\C(\xi^{(i)}_{n}\cup\xi^{(\hat{\imath})}_{n})$, respectively.
 % and set $\xi_\bbn=\xi_{n_1,\dots,n_H} = \bigcup_{i=1}^H \xi^{(i)}_{n_i}$.

For $i,n\in\Z^+$ and $1\leq j\leq g$, define $\Xi^{(i),j}_{n} = (\xi^{(i),j}_{n, k, \ell})_{k,\ell=1}^{n}\in M_n(\C[\xi^{(i)}_n])$ 
to be a \dfn{generic matrix} of size $n$.
Define $\GM_{n}(\Xi^{(i)})$ to be the \index{generic matrix!algebra}{\bf {algebra of generic matrices}};
that is, the unital $\C$-subalgebra of $M_{n}(\C[\xi^{(i)}_{n}])$ generated by 
$\Xi^{(i),1}_{n},\dots,\Xi^{(i),g}_{n}$.
Let 
\[
	\bm{\Xi}^{(i)}_{n} = (\Xi^{(i),1}_{n},\dots,\Xi^{(i),g}_{n})
\]
be a $g$-tuple of generic matrices.

To prepare for their use in Section~\ref{sec:Hyporationals}, we define
$\GM_{n}((\Xi^{(i)})^T)$ to be the algebra of transposed generic matrices, that is,
$\GM_{n}((\Xi^{(i)})^T)$ is the algebra generated by $(\Xi^{(i),1}_{n})^T,\dots,(\Xi^{(i),g}_{n})^T$.
Let 
\[
	(\bm{\Xi}^{(i)}_{n})^T  = ((\Xi^{(i),1}_{n})^T,\dots,(\Xi^{(i),g}_{n})^T)
\] 
be a $g$-tuple of transposed generic matrices and let $\bm{\Xi}^{T, (j)}_{n}$ denote the $2g$-tuple
\[
	\left((\Xi^{(j),1}_{n})^T\otimes I_n,\dots,(\Xi^{(j),g}_{n})^T\otimes I_n,
	I_n\otimes\Xi^{(j),1}_{n},\dots,I_n\otimes\Xi^{(j),g}_{n}\right)
\]
where $\otimes$ is the Kronecker product.
Lastly, let 
\[
	\GM_n(\Xi^{T,(j)}) = \GM_n((\Xi^{(j)})^T)\otimes\GM_n(\Xi^{(j)})
\]
and observe $\GM_n(\Xi^{T,(j)})$ is generated by $\{(\Xi^{(i)}_n)^T\otimes I_n\}\cup\{I_n\otimes \Xi^{(j)}_n\}$.

\begin{remark}
	\label{rem:transpose ok}
	By Lemma~2.5 and Proposition~2.6 in \cite{KVV16}, $\GM_{n}(\Xi^{(i)}) \otimes \GM_{n}(\Xi^{(j)})$ is contained in a skew field,
	$\UD_{n}(\Xi^{(i),(j)})$.
	Since $\GM_{n}(\Xi^{(i)}) \otimes \GM_{n}(\Xi^{(j)})$ and $\GM_{n}(\Xi^{T,(j)})$ are isomorphic as algebras, $\GM_{n}(\Xi^{T,(j)})$ must
	be contained in some skew field, $\UD_{n}(\Xi^{T,(j)})$.

	Thus, for any $n\in\Z^+$, 
	\begin{enumerate}[label=(\roman*)]
		\item $\C(\xi^{(1)}_n)$ is a field,
		\item $\GM_{n}(\Xi^{T,(1)})\subset M_{n^2}(\C(\xi^{(1)}_n))$ is the $\C$-algebra generated by
				$\{(\Xi^{(1)}_n)^T\otimes I_n\}\cup\{I_n\otimes \Xi^{(1)}_n\}$, 
		\item  $\GM_{n}(\Xi^{T,(1)})\subset \UD_{n}(\Xi^{T,(1)})\subset M_{n^2}(\C(\xi^{(1)}_n))$
			and $\UD_{n}(\Xi^{T,(1)})$ is a skew field.
	\end{enumerate}
	Hence, Proposition~\ref{prop:r deg bdd} is applicable; 
	if $r\in\C\skf{\bby\cup\bbx}$, then there exists a $d_r\in\Z^+$ such that
	$\deg_t(r(t\bm{\Xi}^{T, (1)}_n )_{i,j})\geq d_r$, for all $n\in\Z^+$ and $1\leq i,j\leq n^2$,
	whenever $r(\bm{\Xi}^{T, (1)}_n)$ is defined.
\end{remark}

\begin{example}
	By the Cayley-Hamilton theorem, any $X\in M_2(\C)$ satisfies the relation $X^2 = c_1 X + c_0 I_2$, for some scalars $c_1,c_0\in \C$.
	Take the commutator of both sides against $Y\in M_2(\C)$,
	\[
		[X^2,Y] = c_1[X,Y] + c_0[I_2,Y] = c_1[X,Y].
	\]
	Next, take the commutator of both sides against $[X,Y]$,
	\[
		[[X^2,Y],[X,Y]] = c_1[[X,Y],[X,Y]] = 0.
	\]
	Thus, $p(y,x) = [[x^2,y],[x,y]]$ vanishes on $M_2(\C)$, i.e. $p$ is a polynomial identity for $M_2(\C)$.
	Let $r(y,x) = x^2(1-p(y,x))^{-1}$ and note $r$ is a fraction of the polynomials $x^2$ and $1-p(y,x)$.
	Thus, Proposition~\ref{prop:r deg bdd} implies $\deg_t(r(t\bm{\Xi}_n^{T,(1)})_{i,j})\leq \deg_t(t^2x^2) = 2$, for all $n\in\Z^+$.
	In particular, if $n<3$ then $p(t\bm{\Xi}_n^{T,(1)}) = I_2$ and
	 $\deg_t(r(t\bm{\Xi}_n^{T,(1)})_{i,j}) = 2$.
\end{example}

\begin{lemma}
	\label{lem:t deg bdd poly}
	Suppose $\eta:M(\C)^g\to M(\C)^g$ is a free analytic function with a power series that converges for each 
	$X\in M(\C)^g$ and $\eta(\bm{\Xi}^{(1)}_n)$ is a polynomial for each $n$.
%	Let $\eta(\bm{\Xi}^{(1),(2)}_n)\in (M_n(\C[\xi^{(1)}_n\cup\xi^{(2)}_n]))^{2g}$, 
%	i.e. $\eta(\bm{\Xi}^{(1),(2)}_n)$ is a polynomial for each $n$.
	Define
	\[
		T = \set{\deg_t(\eta_j(t\bm{\Xi}^{(1)}_n)_{k,\ell}):n\in\Z^+,1\leq j\leq g,1\leq k,\ell\leq n}.
	\]	
	If $T$ is bounded, then $\eta$ is a free polynomial.
\end{lemma}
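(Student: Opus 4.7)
The plan is to pass from the hypothesis about $t$-degrees of evaluations on generic matrices to a degree bound on the formal power series coefficients of $\eta$ itself. Since $\eta$ is free analytic with a power series converging on all of $M(\C)^g$, write
\[
	\eta = \sum_{m=0}^\infty \sum_{|w|=m} c_w w, \qquad c_w \in \C^g.
\]
Substituting $t\bm{\Xi}^{(1)}_n$ for $\x$ gives, formally,
\[
	\eta_j(t\bm{\Xi}^{(1)}_n) = \sum_{m=0}^\infty t^m \left(\sum_{|w|=m} c_w^j \, w(\bm{\Xi}^{(1)}_n)\right),
\]
and the hypothesis that $\eta(\bm{\Xi}^{(1)}_n)$ is a polynomial (in the entries of $\bm{\Xi}^{(1)}_n$) means the inner sum is a well-defined element of $M_n(\C[\xi^{(1)}_n])$; in particular $\eta_j(t\bm{\Xi}^{(1)}_n)$ is a polynomial in $t$ with matrix coefficients.

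Suppose $T$ is bounded above by some $D \in \Z^+$. Then for every $n$, every $j$, and every $k,\ell$, the coefficient of $t^m$ in $\eta_j(t\bm{\Xi}^{(1)}_n)_{k,\ell}$ vanishes whenever $m > D$. That coefficient is precisely
\[
	\left(\sum_{|w|=m} c_w^j \, w(\bm{\Xi}^{(1)}_n)\right)_{k,\ell},
\]
a polynomial in the algebraically independent commuting variables $\xi^{(1)}_n$. Hence this polynomial is identically zero, and so for every $X \in M_n(\C)^g$ we obtain $\sum_{|w|=m} c_w^j w(X) = 0$ by specialization.

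Since the above holds for every $n$, the homogeneous degree-$m$ component of $\eta^j$ is a free polynomial that vanishes on all of $M(\C)^g$ whenever $m > D$. By the uniqueness of power series coefficients of a free analytic function (equivalently: any nonzero nc polynomial evaluates nontrivially on sufficiently large generic matrices), this forces $c_w^j = 0$ for all $|w| > D$ and all $j$. Therefore $\eta = \sum_{|w|\leq D} c_w w$ is a free polynomial of degree at most $D$.

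The main step is simply the passage from ``identically zero as a polynomial in the entries of $\bm{\Xi}^{(1)}_n$'' to ``zero on every $X \in M_n(\C)^g$''; no obstacle beyond a careful reading of the generic-matrix formalism arises, since the generic matrix entries are commuting indeterminates that may be freely specialized. The rest is bookkeeping on the $t$-grading.
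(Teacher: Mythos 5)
Your argument is correct, but it is worth noting that the paper does not actually prove this lemma: its entire proof is the citation ``This is Proposition~3.1 in [KlSp17].'' What you have written is a self-contained replacement. The chain of reasoning is sound: since $\eta[n]$ agrees with the convergent nc power series near $0$, uniqueness of Taylor coefficients identifies the coefficient of $t^m$ in $\eta_j(t\bm{\Xi}^{(1)}_n)_{k,\ell}$ with the $(k,\ell)$ entry of the (finite) homogeneous sum $\sum_{|w|=m}c_w^j\,w(\bm{\Xi}^{(1)}_n)$; the bound on $T$ forces these entries to vanish identically in $\C[\xi^{(1)}_n]$ for $m>D$; specialization kills the degree-$m$ homogeneous part on every $M_n(\C)^g$; and since no nonzero nc polynomial is a polynomial identity for matrices of all sizes, the coefficients $c_w^j$ with $|w|=m>D$ must vanish. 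The only point deserving an explicit word is the very first identification (that the $t$-grading of $\eta_j(t\bm{\Xi}^{(1)}_n)$ matches the word-length grading of the power series), which you justify correctly via the polynomiality hypothesis and uniqueness of Taylor expansions; everything after that is, as you say, bookkeeping. Your route is more elementary and more transparent in this context than deferring to the cited proposition, at the cost of reproving a known fact.
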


\begin{proof}
	This is Proposition~3.1 in \cite{KlSp17}.
\end{proof}

\section{Hyporational series}
	\label{sec:Hyporationals}
Proposition~\ref{prop:p bij F bij} shows that if $p$ is a free polynomial, then $F(\x,\y) = (Dp(\y)[\x],\y)$ is a free polynomial,
and $p$ is injective if and only if $F$ is injective.
Lemma~\ref{lem:F poly inv} implies $G$, the inverse of $F$, is the unique solution to a 
proper algebraic polynomial that is $\z$-affine linear.
The $\z$-affine linearity is reminiscent of realizations of nc rational functions, see \cite{Vol15} and \cite{KlVo17}.
With this similarity to realizations in mind, we generalize the class of rational series (see Definition \ref{def:rational series})
to a slightly larger class of formal power series that we call the hyporational series.
In particular, the scion of a free polynomial has a hyporational series as its compositional inverse.
We show that every rational series is hyporational and Theorem~\ref{thm:hyporat domain poly}, 
the main result of subsection~\ref{ssec:hyporeal and hyporat}, says that a hyporational series without singularities is a free polynomial.

In subsection~\ref{ssec:bij crit}, we apply the same techniques used to analyze hyporational series to the free derivative of polynomials. 
This leads to the construction of the hypo-Jacobian matrix of a free polynomial mapping and 
Theorem~\ref{thm:matrix free inverse function theorem}.
Finally, combining Theorem~\ref{thm:matrix free inverse function theorem} with results on automorphisms of $\C\fax$ proves the main
result of this paper, Theorem~\ref{thm:Free Groth Thm}.

\subsection{Hyporealizations and hyporational series}
	\label{ssec:hyporeal and hyporat}

\begin{definition}
	Once more, suppose $\bbz = \set{z_1,\dots,z_h}$ is a set of freely noncommuting indeterminates where $h$ is not necessarily
	equal to $g$.
	Let $\bba\in (\C\fralg{\bbx\cup\bbz})^h$.
	Recall from Definition \ref{def:prop alg poly} that $\bba$ is a {\bf proper algebraic polynomial} if
	$\bba$ has no constant term and $\bbd_z(\bba)>1$, that is, if $w$ is a monomial appearing in $\bba$ 
	with $\abs{w}_z >0$ then $\abs{w}\geq 2$.
	We say $\bba$ is $\z$-affine linear if
	$\bba(\x)[\z] = \mfa(\x) + \alpha(\x)[\z]$, where $\alpha(\x)[\z + \w] = \alpha(\x)[\z] + \alpha(\x)[\w]$.
	If $\bba$ is both a proper algebraic polynomial and $\z$-affine linear then we say $\bba$ is a \dfn{hyporealization}.
	
	Suppose $r\in \C\fpsx$ with constant term $r_\ew$.
	We define $r$ to be a \dfn{hyporational series} if there exists a hyporealization $\bba$ such that 
	$r-r_\ew$ is a component of the solution of $\bba$.
	Namely, there exists $\bm{r} = (\bm{r}_1,\dots,\bm{r}_h)\in (\C\fps{\bbx}_+)^h$ such that $\bba(\x)[\bm{r}] = \bm{r}$ 
	and $r-r_\ew = \bm{r}_i$.
	Let $\C_{\textup{hyp}}\fpsx\subset \C\fpsx$ denote the collection of all hyporational series.

%	We say $\bba$ is a \dfn{hyporealization} of $r$.
%	In fact, without referencing $r$, we say that any proper algebraic polynomial $\bba$ that is $\z$-affine linear
%	is a {\bf hyporealization}.
\end{definition}

Recall from Definition \ref{def:rational series} that $\C_{\textup{rat}}\fpsx$ is the algebra of rational series.

\begin{remark}
	\label{rem:rat series regular at 0}
	We recall several facts from realization theory.	
	Let $\C\skf{\bbx}_0\subset \C\skf{\bbx}$ denote the subring of nc rational functions that are regular at the origin:
	\[
		\C\skf{\bbx}_0 = \set{r\in\C\skf{\bbx}:0\in\dom r}.
	\]
	As was mentioned in Remark \ref{rem:reg at 0 are rats}, $\C\skf{\bbx}_0 = \C_\textup{rat}\fpsx$.
	If $r\in \C\skf{\bbx}_0$, then $r$ has a realization; there exist $d\in\Z^+$, $c,b\in M_{d\times 1}(\C)$ 
	and $A_1,\dots,A_g\in M_d(\C)$ such that
	\[
		r = c^T\left(I_d - \sum_{j=1}^g A_j x_j\right)^{-1}b.
	\]
	Classical realization theory has a long and storied history in both mathematics and applied fields.
	We use definitions and results from \cite{KlVo17}, which provides an excellent exposition of realizations
	of nc rational functions and their domains.
\end{remark}

\begin{remark}
	\label{rem:rational mat rep}
	Every rational series is hyporational, that is, $\C_{\textup{rat}}\fpsx\subset \C_{\textup{hyp}}\fpsx$.
	We omit the proof of the above statement, however it follows readily from a rearrangement of the realization of a given nc rational map.
	
	In fact, if $r$ is a formal power series with constant term $r_\ew$, 
	then $r\in\C_\textup{rat}(\fpsx)$ if and only if there exist $\mathscr{A}\in M_h(\C\fax_+)$ and $\mfa\in (\C\fax_+)^h$, 
	such that $r-r_\ew$ is a component of the solution to the hyporealization $\bba(\x)[\z] = \mfa(\x) + \z \mathscr{A}(\x)$.
	This condition precisely delineates the difference between rational series and hyporational series that are not rational.
\end{remark}

Realization theory tells us that there is a very intimate relationship between rational functions and linearity.
Example \ref{ex:classic alg} provides us with a function that is hyporational but not rational.

\begin{example}
	\label{ex:classic alg revisited}	
	The hyporealization $\mathbbm{s}(\x)[\z] = x_2 + x_1z_2x_1$ has the solution $s(\x)=\sum_{n=0}^\infty x_1^nx_2x_1^n$.
	Thus $s$ is hyporational and $\mathbbm{s}(\x)[\z]$ is a hyporealization of $s$.
	
	Arguing by contradiction, suppose $s$ is rational.
	Hence $s$ has a minimal representation $s(\x) = c^T (I- Ax_1 - B x_2)^{-1} b$,
	where $A,B\in M_n(\C)$ and $b,c\in M_{n\times 1}(\C)^g$.
	Let $m_A(t) = a_0 + a_1 t + \dots + a_d t^d$ be the minimal polynomial of $A$ and note there is a $k$
	so that $a_k\neq 0$ since $A\neq 0$.
	Observe $c^T A^i B A^j b = \delta(i,j)$ where $\delta$ is the Kronecker delta.
	Hence
	\begin{align*}
		0 
			&= c^T 0 b = c^T \overline{a_k}m_A(A)BA^k b = \sum_{i=0}^d a_i\overline{a_k} c^T\!\!\left(A^i B A^k\right)b\\
			&= \abs{a_k}^2 c^T\!\!\left(A^k B A^k\right)b = \abs{a_k}^2\\
			&>0,
	\end{align*}
	a contradiction.
	Therefore, $s$ is not rational.
\end{example}

One of the main advantages of the realization theory of nc rational functions is that the intrinsic linearity of rationals is expressed
through matrices.
Since hyporational series are generated by linear proper algebraic polynomials, we would like to imitate rational realization theory
for hyporational series.
This is precisely what we do in subsection \ref{ssec:hypomat reps}.

%
%    ######  ##     ## ########   ######  ########  ######  ######## ####  #######  ##    ## 
%   ##    ## ##     ## ##     ## ##    ## ##       ##    ##    ##     ##  ##     ## ###   ## 
%   ##       ##     ## ##     ## ##       ##       ##          ##     ##  ##     ## ####  ## 
%    ######  ##     ## ########   ######  ######   ##          ##     ##  ##     ## ## ## ## 
%         ## ##     ## ##     ##       ## ##       ##          ##     ##  ##     ## ##  #### 
%   ##    ## ##     ## ##     ## ##    ## ##       ##    ##    ##     ##  ##     ## ##   ### 
%    ######   #######  ########   ######  ########  ######     ##    ####  #######  ##    ##
%
\subsection{Hypomatrix representations}
	\label{ssec:hypomat reps}
	
\begin{definition}
	For any $\C$-algebra $R$ and $n\in\Z^+$, the map $\vecc[n]:M_n(R)\to M_{1\times n^2}(R)$ given by
	\[
		\vecc[n](A)_{(i-1)n+j} = A_{i,j}
	\]
	is a linear isomorphism taking an $n\times n$ matrix with entries in $R$ to a length $n^2$ row vector
	with entries in $R$.
	
	If $U,V\in M_n(R)$ then $U\otimes V\in M_{n^2}(R)$ is the standard Kronecker product.
	Furthermore, if $Z,U,V\in M_n(R)$ then the product $\vecc[n](Z) \cdot (U\otimes V)$ is the product of a $1\times n^2$
	row vector and an $n^2\times n^2$ matrix.
\end{definition}

	Let us see $\vecc[n]$ in action. 
	If $A\in M_3(\C[x_1,\dots,x_9])$ with
	\[
		A = \bpm x_1 & x_2 & x_3\\ x_4 & x_5 & x_6\\ x_7 & x_8 & x_9 \epm,
	\]
	then $\vecc[3](A) = \bpm x_1, & x_2, & x_3, & x_4, & x_5, & x_6, & x_7, & x_8, & x_9 \epm$.
	
	For notational convenience, we allow $\vecc[n]$ to apply coordinate-wise to tuples:
	\[
		\vecc[n]\left((A_1,\dots,A_m)\right) = (\vecc[n](A_1),\dots,\vecc[n](A_m)).
	\]

\begin{lemma}
	\label{lem:vecc kron}
	If $U,V,Z\in M_n(R)$ then 
	\[
		\vecc[n](UZV) = \vecc[n](Z)\cdot\left(U^T\otimes V\right)\in M_{1\times n^2}(R).
	\]
	
\end{lemma}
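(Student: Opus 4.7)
The plan is a direct entry-by-entry verification: I will compare the two row vectors $\vecc[n](UZV)$ and $\vecc[n](Z)\cdot(U^T\otimes V)$ coordinate by coordinate, fixing a pair of indices $1\le i,j\le n$ and checking that the $((i{-}1)n+j)$-th entries agree.

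For the left-hand side, by the definition of $\vecc[n]$, the $((i{-}1)n+j)$-th coordinate equals $(UZV)_{i,j}$, and expanding the triple matrix product gives
\[
(UZV)_{i,j} \;=\; \sum_{k=1}^{n}\sum_{\ell=1}^{n} U_{i,k}\, Z_{k,\ell}\, V_{\ell,j}.
\]
For the right-hand side, I would first write out the row-vector–matrix product in a single running index,
\[
\bigl[\vecc[n](Z)\cdot(U^T\otimes V)\bigr]_{(i-1)n+j} \;=\; \sum_{m=1}^{n^2} \vecc[n](Z)_m\,(U^T\otimes V)_{m,\,(i-1)n+j},
\]
and then reindex $m=(k{-}1)n+\ell$ for $1\le k,\ell\le n$ so that $\vecc[n](Z)_m = Z_{k,\ell}$.

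The key step is to record the block structure of the Kronecker product: under the convention $(A\otimes B)_{(k-1)n+\ell,\,(i-1)n+j}=A_{k,i}\,B_{\ell,j}$, the transposition inside $U^T$ converts the row index $k$ of the outer factor into the column index of $U$, giving
\[
(U^T\otimes V)_{(k-1)n+\ell,\,(i-1)n+j} \;=\; (U^T)_{k,i}\,V_{\ell,j} \;=\; U_{i,k}\,V_{\ell,j}.
\]
Substituting this back and summing over $k,\ell$ matches the expression for $(UZV)_{i,j}$ above, completing the verification.

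There is no conceptual obstacle here: the entire content of the lemma is bookkeeping with Kronecker indices, and the only thing to be careful about is that the transpose on $U$ (rather than on $V$) is exactly what swaps the outer row index $k$ into the column slot required to pair with $Z_{k,\ell}$. The hard part, such as it is, is just making sure the index reparameterization $m=(k{-}1)n+\ell$ is consistent on both sides of the identity; once that is pinned down, the two coordinate expressions are literally the same sum.
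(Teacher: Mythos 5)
Your verification is correct, but it is a genuinely different route from the paper's: the paper proves Lemma~\ref{lem:vecc kron} by observing that its $\vecc[n]$ is a left-sided (row-vector) variant of the standard column-$\vecc$ of Horn and Johnson and then simply adapting their Lemma~4.3.1, whereas you carry out the full index bookkeeping from scratch. What your argument buys is self-containment and an explicit record of the convention $(A\otimes B)_{(k-1)n+\ell,\,(i-1)n+j}=A_{k,i}B_{\ell,j}$, which is exactly the point where sign-of-transpose errors usually creep in; what the paper's citation buys is brevity and an appeal to a standard reference where the analogous identity $\mathrm{vec}(UZV)=(V^T\otimes U)\mathrm{vec}(Z)$ is already established. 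One small caveat applies equally to both proofs: in the last step you silently commute $Z_{k,\ell}$ past $U_{i,k}$ to match $\sum_{k,\ell}U_{i,k}Z_{k,\ell}V_{\ell,j}$, so the identity as written really requires the coefficient algebra $R$ to be commutative (or the factors to lie in a commuting configuration); this is harmless here because every application in the paper takes $R=\C$ or a commutative polynomial ring, but it is worth stating the hypothesis explicitly.
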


\begin{proof}	
	Our definition of $\vecc[n]$ is a left sided version of the $\vecc$ function defined at 4.2.9 in \cite{HoJo94}.
	By adapting Lemma~4.3.1 in \cite{HoJo94} we conclude $\vecc[n](UZV) = \vecc[n](Z)\cdot (U^T\otimes V)$.
\end{proof}

Recall
\[
\bm{\Xi}^{(1)}_n 
= \left(\Xi^{(1),1}_{n},\dots,\Xi^{(1),g}_{n}\right)
\]
is a $g$-tuple of generic matrices while
\[
\bm{\Xi}^{T, (1)}_n 
= \left((\Xi^{(1),1}_{n})^T\otimes I_n,\dots,(\Xi^{(1),g}_{n})^T\otimes I_n,
I_n\otimes\Xi^{(1),1}_{n},\dots,I_n\otimes\Xi^{(1),g}_{n}\right),
\]
is a $2g$-tuple of $n^2$-matrices over $\C[\xi^{(i)}]$.

Borrowing from \cite{KVV16}, we define
\begin{align*}
	\C\fralg{\bby\lra \bbx} 
		\defeq&\hphantom{t}  \C\fralg{\bby}\otimes \C\fralg{\bbx}\\
		\cong&\hphantom{t}  \C\fralg{\bby\cup\bbx}\big / \big([y_i,x_j]:1\leq i,j\leq g\big),
\end{align*}
to be the \dfn{bipartite free $\C$-algebra}.
The algebra $\C\fralg{\bby\lra \bbx}$ is contained in a skew field of fractions, 
$\C\skf{\bby\lra \bbx}$, the bipartite rational functions.

\begin{remark}
	We briefly define the transpose of a polynomial.
	For any $w\in \fax$ with $w = x_{i_1}x_{i_2}\dots x_{i_n}$, we say $w^T = x_{i_n}x_{i_{n-1}}\dots x_{i_1}$.
	Hence, for any polynomial $p = \sum p_w w$, we define $p^T = \sum p_w w^T$.
	In particular, if $X = (X_1,\dots, X_g)$ is a tuple of matrices, then $p^T(X^T) = (p(X))^T$.
\end{remark}

\begin{proposition}
	\label{prop:hyporat has hypomat}
	Suppose $\bba\in (\C\fralg{\bbx\cup\bbz})^g$ is a hyporealization
	with $\bba(\x)[\z] = \mfa(\x) + \alpha(\x)[\z]$.
	If $a\in (\C\fpsx_+)^g$ is the hyporational series such that $\bba(\x)[a(\x)] = a(\x)$,
	then there exists $\Phi\in M_g(\C\fralg{\bby\lra\bbx})$ such that
	\begin{enumerate}[label=(\roman*)]
		\item $\vecc[n]\left(\bba(X)[Z]\right) = \vecc[n](\mfa(X)) + \vecc[n](Z)\cdot\Phi(X^T\otimes I,I\otimes X)$,
	 		for all $n$ and $X,Z\in M_n(\C)^g$;
	 	\item $(I-\Phi(\y,\x))$ is invertible as a matrix over $\C\skf{\bby\lra \bbx}$;
	 	\item $\dom_{nm}((I - \Phi)^{-1}_{i,j})\neq \varnothing$ for all $n,m\in\Z^+$ and $1\leq i,j\leq g$;
	 	\item $(I-\Phi(\bm{\Xi}^{T,(1)}_n))^{-1}$ is defined for all $n\in \Z^+$;
	 	\item $\vecc[n]\big(a\big(\bm{\Xi}^{(1)}_n\big)\big) 
	 	= \vecc[n]\big(\mfa\big(\bm{\Xi}^{(1)}_n\big)\big)\cdot \big(I-\Phi\big(\bm{\Xi}^{T,(1)}_n\big)\big)^{-1}$,
	 	for all $n\in\Z^+$.
	 \end{enumerate}
\end{proposition}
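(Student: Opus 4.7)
The plan is to read $\Phi$ directly off the $\z$-linear component $\alpha$ of the hyporealization. Write each component as $\alpha_j(\x)[\z] = \sum_{k=1}^g \sum_{u,v} c^{(j)}_{u,k,v}\, u(\x)\, z_k\, v(\x)$ where $u,v$ range over words in $\fax$; the condition $\bbd_z(\bba)>1$ forces $|u|+|v|\geq 1$ on every monomial with nonzero coefficient, so each bipartite polynomial
\[
 \Phi_{k,j}(\y,\x) \;\defeq\; \sum_{u,v} c^{(j)}_{u,k,v}\, u^T(\y)\, v(\x)
\]
(with $u^T$ denoting word reversal) has no constant term. To verify (i), I would apply Lemma~\ref{lem:vecc kron} monomial by monomial: $\vecc[n](u(X) Z_k v(X)) = \vecc[n](Z_k)(u(X)^T \otimes v(X))$, and then use $u(X)^T \otimes v(X) = (u^T(X^T) \otimes I)(I \otimes v(X))$, which matches the $(k,j)$-entry of $\Phi(X^T \otimes I,\, I \otimes X)$ after summing over $u,v$. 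Assembling across $k$ and across the $g$ components of $\alpha$ gives the stated identity.

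For (iii) and (iv) the decisive fact is $\Phi(0,0) = 0$. For (iii), take $(Y,X) = (0,0) \in M_n(\C)^g \times M_m(\C)^g$: then $I - \Phi(0,0) = I$ is trivially invertible, so every entry of $(I - \Phi)^{-1}$ is defined there, showing $(0,0) \in \dom_{nm}((I - \Phi)^{-1}_{i,j})$. For (iv), note that $\Phi(\bm{\Xi}^{T,(1)}_n)$ has every entry in the ideal of $\C[\xi^{(1)}_n]$ generated by the generic-matrix entries, whence
\[
 \det\bigl(I - \Phi(\bm{\Xi}^{T,(1)}_n)\bigr) \in \C[\xi^{(1)}_n]
\]
is a polynomial with constant term $1$. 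Being nonzero in this integral domain, it is nonzero in the field $\C(\xi^{(1)}_n)$, so $I - \Phi(\bm{\Xi}^{T,(1)}_n)$ is invertible in $M_{g n^2}(\C(\xi^{(1)}_n))$.

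The main obstacle is (ii): promoting a formal inverse of $I - \Phi$ to one that lives inside the skew field $\C\skf{\bby \lra \bbx}$ rather than merely the power-series completion. Because $\Phi$ is a polynomial matrix with no constant term, the Neumann series $\sum_{k \geq 0} \Phi^k$ gives a two-sided inverse in $M_g(\C\fps{\bby \lra \bbx})$. My plan is to argue that each entry of that inverse is in fact a bipartite rational series: matrix inversion of a polynomial matrix can be carried out via finitely many additions, multiplications, and a single scalar inversion in the Cohn/KVV framework for matrices over the Ore domain $\C\fralg{\bby \lra \bbx}$, producing an element of $\C\skf{\bby \lra \bbx}$. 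The generic evaluation established in (iv) certifies that this candidate inverse is nonsingular at the skew-field level, ruling out any fullness obstruction.

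Finally, (v) follows by combining (i) with the fixed-point equation $a(\x) = \mfa(\x) + \alpha(\x)[a(\x)]$: specializing $X = \bm{\Xi}^{(1)}_n$ and $Z = a(\bm{\Xi}^{(1)}_n)$ in (i) yields
\[
 \vecc[n]\bigl(a(\bm{\Xi}^{(1)}_n)\bigr)\bigl(I - \Phi(\bm{\Xi}^{T,(1)}_n)\bigr) = \vecc[n]\bigl(\mfa(\bm{\Xi}^{(1)}_n)\bigr),
\]
after which (iv) lets us right-multiply by $(I - \Phi(\bm{\Xi}^{T,(1)}_n))^{-1}$ to conclude.
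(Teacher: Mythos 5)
Your construction of $\Phi$ and your arguments for (i), (iii) and (v) are the same as the paper's: decompose $\alpha$ into monomials $u(\x)z_k v(\x)$, set the $(k,j)$-entry of $\Phi$ to $\sum u^T(\y)\otimes v(\x)$, verify (i) with Lemma~\ref{lem:vecc kron}, get (iii) from $I-\Phi(0,0)=I$, and obtain (v) by substituting the fixed-point equation into (i) and right-multiplying by the inverse. Your determinant argument for (iv) --- $\det(I-\Phi(\bm{\Xi}^{T,(1)}_n))$ has constant term $1$ in the integral domain $\C[\xi^{(1)}_n]$, hence is invertible over its fraction field --- is a valid, self-contained alternative to the paper's derivation of (iv) from (iii) via a domain inclusion; either works.

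The one place your argument falls short is (ii). First, $\C\fralg{\bby\lra\bbx}\cong\C\fralg{\bby}\otimes\C\fralg{\bbx}$ is \emph{not} an Ore domain when $g\geq 2$ (a free algebra on two or more generators already fails the Ore condition), so the skew field $\C\skf{\bby\lra\bbx}$ is a universal field of fractions in Cohn's sense, not an Ore localization, and your proposed "finitely many operations plus a single scalar inversion" reduction presupposes that every pivot and Schur complement encountered along the way is invertible in the skew field --- which is exactly what is to be proved. Second, the transition from "invertible at (generic) matrix evaluations" to "invertible over $\C\skf{\bby\lra\bbx}$" is the entire content of the step; asserting that this "rules out any fullness obstruction" names the right concept but does not prove it. The paper disposes of (ii) in one line by citing Proposition~3.8 of [KVV16], which gives invertibility of $I-\Phi$ over $\C\skf{\bby\lra\bbx}$ directly from the invertibility of the constant term $I-\Phi(0,0)=I$; you should either invoke that result (or Cohn's fullness criterion for the universal field of fractions) explicitly, or supply a proof of the implication you are relying on.
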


\begin{proof}
	For $1\leq i\leq g$, we write
	\[
		\alpha^i(\x)[\z] = \sum_{\ell}\sum_{j=1}^g U^\ell_{i,j}(\x)z_j V^\ell_{i,j}(\x),
	\]
	where each $U_{i,j},V_{i,j}\in \C\fax$.
	Set $\Phi_{j,i}(\y,\x) = \sum_{\ell}(U_{i,j}^\ell)^T(\y)\otimes V_{i,j}^\ell(\x)$ and note
	\begin{align*}
		\Phi\left(X^T\otimes I, I\otimes X\right) &= \left(\Phi_{j,i}\left(X^T\otimes I, I\otimes X\right)\right)_{j,i=1}^g\\
			&= \Big(\sum_\ell (U_{i,j}^\ell)^T (X^T\otimes I)V_{i,j}^\ell(I\otimes X) \Big)_{j,i=1}^g,
	\end{align*}
	for all $X\in M(\C)^g$.
	Let $1\leq i\leq g$,
	\begin{align*}
		\vecc[n](\alpha^i(X)[Z]) &= \vecc[n]\Big( \sum_{\ell}\sum_{j=1}^g U^\ell_{i,j}(X)Z_j V^\ell_{i,j}(X) \Big)\\
			&=\sum_{\ell}\sum_{j=1}^g \vecc[n]\left(U^\ell_{i,j}(X)Z_j V^\ell_{i,j}(X)\right)\\
			&=\sum_{j=1}^g\sum_{\ell} \vecc[n](Z_j)\cdot\left(U^\ell_{i,j}(X)^T\otimes V^\ell_{i,j}(X)\right),
	\end{align*}
	where the last equality is using Lemma~\ref{lem:vecc kron}.
	Continuing on,
	\begin{align*}
		\vecc[n](\alpha^i(X)[Z])
			&=\sum_{j=1}^g\sum_{\ell} \vecc[n](Z_j)\cdot\left(U^\ell_{i,j}(X)^T\otimes V^\ell_{i,j}(X)\right)\\
			&=\sum_{j=1}^g\vecc[n](Z_j) \sum_{\ell} \left((U^\ell_{i,j})^T(X^T)\otimes V^\ell_{i,j}(X)\right)\\
			&=\sum_{j=1}^g\vecc[n](Z_j) \sum_{\ell} \left((U^\ell_{i,j})^T(X^T\otimes I)V^\ell_{i,j}(I\otimes X)\right)\\
			&=\sum_{j=1}^g\vecc[n](Z_j)\sum_{j=1}^g \Phi_{j,i}(X^T\otimes I,I\otimes X)\\
			&=\left(\vecc[n](Z)\cdot\Phi\left(X^T\otimes I,I\otimes X\right)\right)_i.
	\end{align*}
	Therefore, $\vecc[n](\alpha(X)[Z]) = \vecc[n](Z)\Phi(X^T\otimes I,I\otimes X)$ and 
	\begin{equation}
		\label{eq:bba Phi}
		\vecc[n](\bba(X)[Z]) = \vecc[n](\mfa(\x)) + \vecc[n](Z)\cdot\Phi(X^T\otimes I,I\otimes X)
	\end{equation}
	for all $n$ and $X,Z\in M_n(\C)^g$.
	Thus, item (i) is proved.
	
	For item (ii), note $I-\Phi(\y,\x)\in M_g(\C\fralg{\bby\lra \bbx})$, hence
	$I-\Phi(\y,\x)\in M_g(\C\skf{\bby\lra \bbx})$.
	Since $I-\Phi(0,0) = I$, is invertible, Proposition~3.8 in \cite{KVV16} implies $I-\Phi$ is invertible
	and $(I-\Phi(\y,\x))^{-1}\in M_g(\C\skf{\bby\lra \bbx})$.
	
	For item (iii) we note $(0_n\otimes I_m,I_n\otimes 0_m) = (0_{nm},0_{nm})\in \dom_{nm}((I-\Phi)^{-1}_{i,j})$ 
	since $I_{nm}-\Phi(0_{nm},0_{nm}) = I_{nm}$ is invertible.
	
	Item (iv) is simply a consequence of (iii);
	\[
		\dom_{n^2}((I-\Phi)^{-1}_{i,j})\subset \dom ((I-\Phi(\bm{\Xi}^{T,(1)}_n))^{-1}_{i,j}).
	\]
	Finally, substitute $\bm{\Xi}^{(1)}_n$ in for $X$ and $a(\bm{\Xi}^{(1)}_n)$ in for $Z$ in \eqref{eq:bba Phi} to get
	\begin{align*}
		\vecc[n](a(\bm{\Xi}^{(1)}_n)) &= \vecc[n](\bba(\bm{\Xi}^{(1)}_n)[a(\bm{\Xi}^{(1)}_n)])\\
			&= \vecc[n](\mfa(\bm{\Xi}^{(1)}_n)) + \vecc[n](a(\bm{\Xi}^{(1)}_n))\cdot\Phi(\bm{\Xi}^{T,(1)}_n).
	\end{align*}
	Hence by rearranging,
	\[
		\vecc[n]\big(a\big(\bm{\Xi}^{(1)}_n\big)\big)\cdot\big(I_{g n^2} - \Phi\big(\bm{\Xi}^{T,(1)}_n\big)\big)
			= \vecc[n]\big(\mfa\big(\bm{\Xi}^{(1)}_n\big)\big).
	\]
	Multiplying both sides on the right by $(I_{g n^2}-\Phi(\bm{\Xi}_n^{T,(1)}))^{-1}$ yields
	\[
		\vecc[n]\big(a\big(\bm{\Xi}^{(1)}_n\big)\big) 
			= \vecc[n]\big(\mfa\big(\bm{\Xi}^{(1)}_n\big)\big)
			\cdot\big(I_{g n^2} - \Phi\big(\bm{\Xi}^{T,(1)}_n\big)\big)^{-1},
	\]
	as desired.
\end{proof}

If $a$ is a hyporational series then $a[n] = a\lvert_{M_n(\C)^g}$ is a rational function in $g n^2$ commuting indeterminates by
Proposition~\ref{prop:hyporat has hypomat}(v).
In fact, Proposition~\ref{prop:hyporat has hypomat} shows that a small amount of commutativity is all the prevents a hyporational
from being rational.

\begin{definition}
	\label{def:hypomatrix}
	Suppose $\bba(\x)[\z] = \mfa(\x) + \alpha(\x)[\z]$ is hyporealization.
	Let  $\Phi\in M_g(\C\fralg{\bby\lra\bbx})$ be the matrix constructed in Proposition~\ref{prop:hyporat has hypomat}. 
	We define $\Phi$ to be the \dfn{hypomatrix representation} of $\bba$.
	That is,
	\[
		\vecc[n](\bba(X)[Z]) = \vecc[n](\mfa(X)) + \vecc[n](Z)\cdot \left( I - \Phi(X^T\otimes I, I\otimes X) \right)
	\]
	for all $X,Z\in M_n(\C)^g$ and $n\in\Z^+$.
		
	Let $a\in \C_{\text{hyp}}\fpsx$ be hyporational.
	Define
	\[
		\dom_n(a) = \bigcup_{\Phi\in \mathscr{A}} \dom_n((I-\Phi)^{-1}),
	\]
	where $\mathscr{A}$ is the collection of all hypomatrix representations of $a$.
	
	Suppose $X\in \dom_n(a)$.
	Let $\bba(\x)[\z] = \mfa(\x) + \alpha(\x)[\z]$ be a hyporealization such that $a$ is the first
	component of the solution of $\bba$ and $X\in \dom_n( (I - \Phi)^{-1})$, where $\Phi$ is the associated
	hypomatrix representation of $\bba$.
	We define
	\[
		a(X) = \left(\vecc[n]^{-1}\left(	\vecc[n](\mfa(X))\cdot \left(	I - \Phi(X^T\otimes I, I \otimes X)	\right)^{-1}\right)	\right)_1.
	\]
	Thus, we can evaluate $a$ at any $X\in\dom_n(a)$.
	\end{definition}

\begin{example}
	Let $p(x_1,x_2,x_3) = (x_1, x_2 - x_1^2, x_3 + x_1  (-x_2 + x_1 x_2) - x_2^2)$,
	hence 
	\[
		p(\x) = \x
		\bpm
			1 & -x_1 & -x_2 + x_1 x_2 \\ 0 & 1 & -x_2\\ 0 & 0 & 1
		\epm
		=\x
		\bpm
			1 & x_1 & x_2 \\ 0 & 1 & x_2\\ 0 & 0 & 1
		\epm^{-1}
	\]
	and $\bbq(\x)[\z] = (x_1,x_2 + x_1 z_1, x_3 + x_1 z_2 + x_2 z_2)$.
	Note $\bbq$ is a hyporealization and setting
	\[
		\Phi(\y,\x) = \bpm 0 & y_1\otimes 1 & 0 \\ 0 & 0 & (y_1+y_2)\otimes 1 \\ 0 & 0 & 0\epm,
	\]
	$\Phi$ is the hypomatrix representation of $\bbq$.
	Observe $\Phi$ is nilpotent, and
	\[
		(I-\Phi(\y,\x))^{-1} 
		= \bpm 1 & y_1\otimes 1 & y_1(y_1+y_2)\otimes 1 \\ 0 & 1 & (y_1+y_2)\otimes 1 \\ 0 & 0 & 1\epm,
	\]
	and
	\[
		\vecc[n](\bm{\Xi}^{(1)}_n)\left(I-\Phi\left(\bm{\Xi}_n^{T,(1)}\right)\right)^{-1}
		= \vecc[n](q(\bm{\Xi}_n^{(1)})).
	\]
	Thus $q(\x) = (x_1, x_2 + x_1^2, x_3 + (x_1+x_2)x_2 + (x_1 + x_2)x_1^2)$.
\end{example}

\begin{example}
	\label{ex:classic alg revisted again}
	We once again revisit Example \ref{ex:classic alg}.
	That is, $p(\x) = (x_1,x_2-x_1x_2x_1)$,	$q(\x) = (x_1,\sum_{n=0}^\infty x_1^nx_2x_1^n)$
	and the auxiliary inverse of $p$ is $\bbq(\x)[\z] = (x_1,x_2 + x_1z_2z_1)$.
	Since $q^1(\x) = x_1$, $\hat{\bbq}(\x)[\z] = \bpm x_1, & x_2 + x_1z_2x_1 \epm$ is a hyporealization with
	$\hat{\bbq}(\x)[q(\x)] = q(\x)$.
	The hypomatrix representation of $\hat{\bbq}$ is
	\[
		I-\Phi(\y,\x) = \bpm 1\otimes 1 & 0 \\ 0 & 1\otimes 1-y_1\otimes x_1 \epm,
	\]
	with inverse,
	\[
		(I-\Phi(\y,\x))^{-1} = \bpm 1\otimes 1 & 0 \\ 0 & (1\otimes 1-y_1\otimes x_1)^{-1} \epm.
	\]
	Recall that the inverse of $J_p$, the Jacobian matrix of $p$, is a polynomial matrix, however $p$ is not injective.
	In this case, the hypomatrix representation witnesses the non-injectivity of $p$ since $(I-\Phi)^{-1}$ is not a polynomial matrix.
\end{example}

Recall (see Lemma~\ref{lem:once referenced}) that if $r\in \C\skf{\bbx}$ then $r(t\x)\in \C\skf{\bbx}(t)$.
Since $\C\skf{\bbx}$ is a skew field, $\C\skf{\bbx}(t)$ is the classical ring of quotients of $\C\skf{\bbx}[t]$.
Hence there exist $\alpha,\beta\in\C\skf{\bbx}[t]$ such that $r(t\x) = \alpha(\x)[t]\beta(\x)[t]^{-1}$ and
we can extend $\deg_t$ to $\C\skf{\bbx}(t)$.

\begin{lemma}
	\label{lem:poly deg lower bound}
	If $a\in (\C\fpsx)^g$ is a tuple of hyporational series then there exists $\Delta\in\Z$ such that for all $n\in\Z^+$, 
	\begin{equation}
		\label{eq:deg a bdd Delta}
		\max \set{ \deg_t\big(a_k(t\bm{\Xi}_n^{(1)})_{i,j}\big) : 1\leq k\leq g,  1\leq i,j\leq n} \leq \Delta.
	\end{equation}
\end{lemma}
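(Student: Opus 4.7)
The plan is to combine the hypomatrix representation from Proposition~\ref{prop:hyporat has hypomat} with the uniform rational degree bound of Proposition~\ref{prop:r deg bdd}. For each $1 \leq k \leq g$, I choose a hyporealization $\bba^k(\x)[\z] = \mfa^k(\x) + \alpha^k(\x)[\z]$ whose solution contains $a_k$ as one of its components, and let $\Phi^k$ be the associated hypomatrix. By Proposition~\ref{prop:hyporat has hypomat}(v), the solution evaluated at $\bm{\Xi}_n^{(1)}$ is expressed via $\vecc[n](\mfa^k(\bm{\Xi}_n^{(1)})) \cdot (I - \Phi^k(\bm{\Xi}_n^{T,(1)}))^{-1}$, and selecting the component corresponding to $a_k$ gives a formula for $\vecc[n](a_k(\bm{\Xi}_n^{(1)}))$. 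Replacing $\bm{\Xi}_n^{(1)}$ by $t\bm{\Xi}_n^{(1)}$ scales $\bm{\Xi}_n^{T,(1)}$ entrywise to $t\bm{\Xi}_n^{T,(1)}$ and yields the same identity with $t$ inserted.

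The polynomial factor is controlled trivially: since $\mfa^k$ is a free polynomial, every entry of $\mfa^k(t\bm{\Xi}_n^{(1)})$ has $\deg_t$ at most $\deg(\mfa^k)$, independently of $n$. For the inverse factor I lift $\Phi^k$ to a free polynomial matrix $\tilde\Phi^k$ over $\C\fralg{\bby \cup \bbx}$. Because $\tilde\Phi^k(0,0) = 0$, the matrix $I - \tilde\Phi^k$ is invertible over the free skew field $\C\skf{\bby \cup \bbx}$, and each entry $r^k_{i,j} = ((I - \tilde\Phi^k)^{-1})_{i,j}$ is a free rational function in $2g$ variables. The substitution $\bby \mapsto (\bm{\Xi}_n^{(1)})^T \otimes I$, $\bbx \mapsto I \otimes \bm{\Xi}_n^{(1)}$ automatically respects the commutation relations $[y_j,x_\ell]=0$, so $r^k_{i,j}(\bm{\Xi}_n^{T,(1)})$ coincides with the corresponding entry of $(I - \Phi^k(\bm{\Xi}_n^{T,(1)}))^{-1}$.

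Now Proposition~\ref{prop:r deg bdd} applies with $\bbw = \bby \cup \bbx$ and $U = \UD_n(\Xi^{T,(1)}) \subset M_{n^2}(\C(\xi^{(1)}_n))$, as recorded in Remark~\ref{rem:transpose ok}. Writing $r^k_{i,j}(t\bby,t\bbx) = \alpha^k_{i,j}(\bby,\bbx)[t]\,\beta^k_{i,j}(\bby,\bbx)[t]^{-1}$ with $\alpha^k_{i,j},\beta^k_{i,j} \in \C\skf{\bby \cup \bbx}[t]$ yields a bound $D^k_{i,j} = \deg_t(\alpha^k_{i,j}(\bby,\bbx)[t])$ on $\deg_t$ of every entry of $r^k_{i,j}(t\bm{\Xi}_n^{T,(1)})$, and this bound is independent of $n$. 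Setting $D_k = \max_{i,j} D^k_{i,j}$ and applying Lemma~\ref{lem:min val bdd} to the row-vector times matrix product gives $\deg_t(a_k(t\bm{\Xi}_n^{(1)})_{i,j}) \leq \deg(\mfa^k) + D_k$ for all $n$ and all $i,j$. The desired $\Delta$ is then $\max_k(\deg(\mfa^k) + D_k)$.

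The main obstacle is the bipartite-versus-free dichotomy: Proposition~\ref{prop:r deg bdd} is stated for freely noncommuting indeterminates, whereas the hypomatrix $\Phi^k$ naturally lives in the bipartite algebra $\C\fralg{\bby \lra \bbx}$. The resolution is to observe that any free lift $\tilde\Phi^k$ produces a free rational inverse whose evaluation at $\bm{\Xi}_n^{T,(1)}$ agrees with the bipartite evaluation of $(I-\Phi^k)^{-1}$, because that particular matrix tuple already satisfies the bipartite commutation relations. Once this identification is made, the rest is straightforward degree arithmetic via Lemma~\ref{lem:min val bdd}.
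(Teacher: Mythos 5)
Your argument is correct and essentially identical to the paper's: both decompose $\vecc[n]\big(a(t\bm{\Xi}_n^{(1)})\big)$ via the hypomatrix representation of Proposition~\ref{prop:hyporat has hypomat}(v), bound the polynomial factor $\vecc[n]\big(\mfa(t\bm{\Xi}_n^{(1)})\big)$ by its degree, bound the entries of $(I-\Phi)^{-1}$ evaluated at $t\bm{\Xi}_n^{T,(1)}$ uniformly in $n$ via Proposition~\ref{prop:r deg bdd}, and combine the two with Lemma~\ref{lem:min val bdd}. The only differences are cosmetic: you work one component $a_k$ at a time rather than with a single hyporealization of the tuple, and you make explicit the free-versus-bipartite lifting that the paper glosses over by applying Proposition~\ref{prop:r deg bdd} directly to elements of $\C\skf{\bby\lra\bbx}$.
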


\begin{proof}
	Let $\bba(\x)[\z] = \mfa(\x) + \alpha(\x)[\z]$ be a hyporealization of $a$ and let  $\Phi$ be the hypomatrix representation of $\bba$.
	We begin by noting that $\mfa$ is a free polynomial, hence for $1\leq i,j\leq n$ and $1\leq k\leq g$,
		$\deg_t(\mfa_k(t\bm{\Xi}_n^{(1)})_{i,j})\leq \deg_t(\mfa_k(t\x)).$
	Set $\delta = \max_{k} \set{\deg_t(\mfa_k(t\x))}$.
	Since $\vecc[n]$ preserves the entries of matrices,
	\begin{align*}
		\max_{1\leq \ell\leq g n^2} \set{\deg_t\big(\vecc[n]\big(\mfa(t\bm{\Xi}_n^{(1)})\big)_\ell\big)} 
			&= \max_{\substack{1\leq k\leq g\\ 1\leq i,j\leq n}}\set{\deg_t(\mfa_k(t\bm{\Xi}_n^{(1)})_{i,j})}\\
			&\leq \max_{1\leq k\leq g}\set{\deg_t(\mfa_k(t\x))} = \delta.
	\end{align*}
	Next we recall $(I-\Phi)^{-1}\in M_g(\C\skf{\bby\cup\bbx})$ and $(I-\Phi(t\bm{\Xi}^{T,(1)}_n))^{-1}$ exists for all $n\in\Z^+$.
	For notational ease we set $\Psi = (I-\Phi)^{-1}$.
	Each $\Psi_{i,j}(t\y,t\x)\in \C\skf{\bby\lra\bbx}(t)$, thus by Proposition~\ref{prop:r deg bdd} we know there exists
	$d_{i,j}\in \Z^+$ such that for all $n\in\Z^+$, $\deg_t(\Psi_{i,j}(t\bm{\Xi}_n^{T,(1)}))\leq d_{i,j}$.
	If we set $D = \max_{i,j}\set{d_{i,j}}$, then for all $n\in \Z^+$,
	\[
		\max_{1\leq k,\ell\leq g n^2} \set{\deg_t\big(\Psi \big( t\bm{\Xi}_n^{T,(1)} \big)_{k,\ell} \big)} \leq D.
	\]

	Proposition~\ref{prop:hyporat has hypomat} says for each $n\in \Z^+$, $\vecc[n](a(t\bm{\Xi}_n^{(1)}))$ is the product
	of the row vector $\vecc[n](\mfa(t\bm{\Xi}_n^{(1)}))$ and the matrix $\Psi(t\bm{\Xi}_n^{T,(1)})$.
	Thus, in light of the degree bounds found above and Lemma~\ref{lem:min val bdd}, we have
	\[
		\max_{1\leq k\leq gn^2} \set{\deg_t\big(\vecc[n]\big(a(t\bm{\Xi}_n^{(1)})\big)_k\big)} \leq \delta+D.
	\]
	Finally, set $\Delta = \delta + D$ and observe once more that since $\vecc[n]$ preserves the entries of matrices it must preserve
	the degrees of the entries.
	Therefore, for all $n\in\Z^+$,
	\begin{align*}
		\max \set{\deg_t\big(a_k(t\bm{\Xi}_n^{(1)})_{i,j}\big):1\leq k\leq g,  1\leq i,j\leq n} \leq \Delta.
		\tag*{$\square$}
	\end{align*}
\end{proof}

\THMhyporatdomainpoly

\begin{proof}
	The hyporationality of $a$ implies $a\lvert_{M_n(\C)^g}$ is a matrix of commutative rational functions.
	In fact, $\vecc[n]\circ a\lvert_{M_n(\C)^g}\circ \vecc[n]^{-1}:\C^{g n^2}\to \C^{n^2}$ is an $n^2$-tuple
	of rational functions in $g n^2$ commuting indeterminates.
	Since $\dom_n(a) = M_n(\C)^g$, each $(\vecc[n]\circ a\lvert_{M_n(\C)^g}\circ \vecc[n]^{-1})_k$ is a rational function with domain of $\C^{g n^2}$, hence each is a polynomial.
	Thus, each $a(\bm{\Xi}^{(1)}_n)_{i,j}$ is a polynomial and in particular, $a(\bm{\Xi}^{(1)}_n)$ is a polynomial.
	However, Lemma~\ref{lem:poly deg lower bound} tells us that there is some $\Delta\in\Z^+$ such that for all $n\in \Z^+$,
	$\max_{i,j,k} \{\deg_t(a_k(t\bm{\Xi}_n^{(1)})_{i,j})\} \leq \Delta$.
	That is, $a$ is a free analytic function such that $a(\bm{\Xi}^{(1)}_n)$ is a polynomial for each $n$,
	and	the degree of the polynomials is bounded.
	Therefore, Lemma~\ref{lem:t deg bdd poly} implies $a$ is a free polynomial.
\end{proof}

%\CORpbijqhyp
%
%\begin{proof}
%	By Lemma~\ref{lem:q agrees with poly} we know that $q\lvert_{M_n(\C)^g}$ agrees with a free polynomial for each $n\in\Z^+$.
%	Hence, $q$ is hyporational and $\dom_n(q) = M_n(\C)^g$, for all $n$.
%	Therefore, Theorem~\ref{thm:hyporat domain poly} implies $q$ is a free polynomial.
%\end{proof}

\CORFpolyinv

\begin{proof}
	Suppose $p:M(\C)^g\to M(\C)^g$ is a free polynomial and $F$ is its scion.
	Recall $F(\x,\y) = (Dp(\y)[\x],\y)$.
	If $F$ has a free polynomial inverse then $F$ is bijective.
	Hence Proposition~\ref{prop:p bij F bij} implies $p$ is bijective.
	
	Suppose $p$ is bijective. 
	An application of Proposition~\ref{prop:p bij F bij} shows $F$ is bijective.
	Let $G$ be the inverse of $F$.
	Theorem~\ref{thm:PMI is poly} implies $J_F$, the Jacobian matrix of $F$ is invertible as a polynomial matrix.
	Hence, $\bbG(\x,\y)[\z,\y] = (\x,\y) J_F^{-1}(\z,\y)$, the auxiliary inverse of $F$, must be a polynomial
	and by Lemma~\ref{lem:F poly inv}, $\bbG$ is a hyporealization.
	Thus, $G$ is the solution of the hyporealization $\bbG(\x,\y)[\z,\y]$ and $G$ is hyporational.
	
	Since $F$ is bijective and $G$ is its inverse, Lemma~\ref{lem:q agrees with poly} says $G\lvert_{M_n(\C)^g}$ agrees with a 
	free polynomial, for each $n\in\Z^+$.
	In particular, $\dom_n(G) = M_n(\C)^g$ for each $n\in\Z^+$.
	Thus, Theorem~\ref{thm:hyporat domain poly} implies $G$ is a free polynomial.
\end{proof}

Theorem~\ref{thm:Free Jacobian Conjecture} - the Free Jacobian conjecture - tells us that a free polynomial $p$ is injective if and only
if $Dp(Y)[X]$ is nonsingular for all $X\in M(\C)^g$.
Corollary~\ref{cor:F poly inv} strengthens this condition.

%
%    ######  ##     ## ########   ######  ########  ######  ######## ####  #######  ##    ## 
%   ##    ## ##     ## ##     ## ##    ## ##       ##    ##    ##     ##  ##     ## ###   ## 
%   ##       ##     ## ##     ## ##       ##       ##          ##     ##  ##     ## ####  ## 
%    ######  ##     ## ########   ######  ######   ##          ##     ##  ##     ## ## ## ## 
%         ## ##     ## ##     ##       ## ##       ##          ##     ##  ##     ## ##  #### 
%   ##    ## ##     ## ##     ## ##    ## ##       ##    ##    ##     ##  ##     ## ##   ### 
%    ######   #######  ########   ######  ########  ######     ##    ####  #######  ##    ##
%
\subsection{Bijectivity criteria}
	\label{ssec:bij crit}
Proposition~\ref{prop:p bij F bij} tells us that a free polynomial is injective if and only if its scion is injective.
Thus, when testing the bijectivity of a free polynomial, it suffices to only test for the bijectivity of its scion.
The main result of this subsection, Theorem~\ref{thm:matrix free inverse function theorem},
combines Corollary~\ref{cor:F poly inv} with Pascoe's Free Jacobian conjecture to
get a more direct analog to the classical Jacobian conjecture.

Let $f\in (\C\fralg{\bby\cup\bbx})^g$.
% with $f = \sum_w f_w w$ and set
%\[
%	W = \set{w\in\fralg{\bby\cup\bbx}: f_w\neq 0}.
%\]
We say $f$ is $\x$-linear if $\abs{w}_x = 1$ for all monomials $w$ appearing in $f$.
In other words, $f$ is a sum of monomials that contain exactly one $\x$-term.

\begin{lemma}
	\label{lem:hyp jac matrix}
	Suppose $f\in (\C\fralg{\bby\cup\bbx})^g$.
	If $f$ is $\x$-linear, then there exists a matrix of bipartite polynomials, $\mathcal{J}\in M_g(\C\fralg{\bbz\lra \bby})$,
	such that
	\[
		\vecc[n]\left(f(Y)[X] \right) = \vecc[n](X) \cdot \mathcal{J}(Y^T\otimes I_n, I_n\otimes Y),
	\]
	for all $X,Y\in M_n(\C)^g$ and $n\in \Z^+$.
\end{lemma}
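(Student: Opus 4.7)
The plan is to follow the same template used in the proof of Proposition~\ref{prop:hyporat has hypomat}, stripped of the ``proper algebraic'' structure since here the only hypothesis is $\x$-linearity. Since $f$ is $\x$-linear, every monomial appearing in $f^i$ has exactly one $\x$-letter, so I can write
\[
  f^i(\y)[\x] = \sum_\ell \sum_{j=1}^g U^\ell_{i,j}(\y)\, x_j\, V^\ell_{i,j}(\y),
\]
a finite sum with $U^\ell_{i,j}, V^\ell_{i,j} \in \C\fralg{\bby}$. For any $X,Y \in M_n(\C)^g$ Lemma~\ref{lem:vecc kron} applied to each sandwich gives
\[
  \vecc[n]\bigl(U^\ell_{i,j}(Y)\, X_j\, V^\ell_{i,j}(Y)\bigr) = \vecc[n](X_j) \cdot \bigl(U^\ell_{i,j}(Y)^T \otimes V^\ell_{i,j}(Y)\bigr),
\]
so after summing, $\vecc[n](f^i(Y)[X])$ is a $\C$-linear combination of row-vector products $\vecc[n](X_j) \cdot (\text{Kronecker factor})$.

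The next step is to recognize each Kronecker factor as the evaluation of a bipartite polynomial. Using $p(Y)^T = p^T(Y^T)$ for polynomials $p$, together with the identity $(A \otimes I_n)(I_n \otimes B) = A \otimes B$, I have
\[
  U^\ell_{i,j}(Y)^T \otimes V^\ell_{i,j}(Y) \;=\; (U^\ell_{i,j})^T(Y^T \otimes I_n)\cdot V^\ell_{i,j}(I_n \otimes Y).
\]
Because $Y^T \otimes I_n$ and $I_n \otimes Y$ commute, this is a well-defined evaluation of an element of $\C\fralg{\bbz \lra \bby}$ at $(Y^T \otimes I_n,\, I_n \otimes Y)$. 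Setting
\[
  \mathcal{J}_{j,i}(\bm z, \y) \;:=\; \sum_\ell (U^\ell_{i,j})^T(\bm z) \otimes V^\ell_{i,j}(\y) \;\in\; \C\fralg{\bbz \lra \bby}
\]
and collecting terms gives $\vecc[n](f^i(Y)[X]) = \sum_{j=1}^g \vecc[n](X_j) \cdot \mathcal{J}_{j,i}(Y^T \otimes I_n, I_n \otimes Y)$. Stacking over $i$ and assembling $\mathcal{J} = (\mathcal{J}_{j,i})_{j,i=1}^g \in M_g(\C\fralg{\bbz \lra \bby})$ yields the claimed identity.

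There is no real obstacle: $\x$-linearity supplies the sandwich decomposition for free, and the rest is bookkeeping with the vec--Kronecker identity. The only subtle point is the transpose that appears on the $U$-factor, forced by Lemma~\ref{lem:vecc kron}; this is precisely why the left-hand indeterminate tuple in the bipartite algebra is $\bbz$ (to be evaluated at $Y^T \otimes I_n$) rather than $\bby$. Note that the statement does not ask for uniqueness of $\mathcal{J}$, which is fortunate, because the sandwich decomposition of $f$ is highly non-unique; any choice of decomposition produces a valid $\mathcal{J}$.
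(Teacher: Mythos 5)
Your proof is correct and is exactly the argument the paper intends: the paper's own proof simply omits the details, remarking that the construction of $\mathcal{J}$ is ``almost exactly the same as'' that of the hypomatrix representation in Proposition~\ref{prop:hyporat has hypomat}, and your sandwich decomposition, application of Lemma~\ref{lem:vecc kron}, and the identity $(U^\ell_{i,j})^T(Y^T)\otimes V^\ell_{i,j}(Y) = (U^\ell_{i,j})^T(Y^T\otimes I_n)\,V^\ell_{i,j}(I_n\otimes Y)$ reproduce that construction verbatim, including the index convention $\mathcal{J}_{j,i}$.
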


\begin{proof}
	We omit the details of the construction of $\mathcal{J}$ since it is almost exactly the same as 
	the construction of the hypomatrix representation found in Proposition~\ref{prop:hyporat has hypomat}.
%	Since $f$ is $\x$-linear there exist $U^\ell_{i,j},V^\ell_{i,j}\in \C\fralg{y}$ such that
%	\[
%		f^i(\y)[\x] = \sum_\ell \sum_{j=1}^g U^\ell_{i,j}(\y) x_j V^\ell_{i,j}(\y),
%	\]
%	for $1\leq i\leq g$.
%	Recall that $(U^\ell_{i,j})^T(X^T) = (U^\ell_{i,j}(X))^T$ and set 
%	\[
%		\mathcal{J}(\x,\y) = \left( \sum_{\ell} (U^\ell_{i,j})^T(\x)V^\ell_{i,j}(\y) \right)_{j,i=1}^g.
%	\]
%	Let $X,Y\in M_n(\C)^g$ and observe
%	\begin{align*}
%		\vecc[n]\left(f(Y)[X]\right)
%			&=\vecc[n]\left(\sum_{\ell} \sum_{j=1}^g U^\ell_{i,j}(Y)X_j V^\ell_{i,j}(Y)\right)\\
%			&=\sum_{\ell} \sum_{j=1}^g \vecc[n]\left(U^\ell_{i,j}(Y)X_j V^\ell_{i,j}(Y)\right)\\
%			&=\sum_{j=1}^g \sum_{\ell} \vecc[n](X)\cdot 
%				\left(U^\ell_{i,j}(Y)^T\otimes V^\ell_{i,j}(Y)\right) \\
%			&=\sum_{j=1}^g \vecc[n](X)
%				\sum_{\ell}(U^\ell_{i,j})^T(Y^T\otimes I)V^\ell_{i,j}(I\otimes Y) \\
%			&=\sum_{j=1}^g \vecc[n](X)\cdot \mathcal{J}\left(Y^T\otimes I_n,I_n\otimes Y\right)_{j,i}.
%	\end{align*}
%	Thus,
%	\[
%		\vecc[n]\left(f(Y)[X])\right) 
%			= \vecc[n]\left(X \right)\cdot \mathcal{J}\left(Y^T\otimes I_n, I_n\otimes Y\right),
%	\]
%	for all $X,Y\in M_n(\C)^g$ and $n\in \Z^+$.
\end{proof}

\begin{definition}
	\label{def:hypo-Jacobian}
	Suppose $p:M(\C)^g\to M(\C)^g$ is a free polynomial with derivative, $Dp(\y)[\x]\in (\C\fralg{\bby\cup\bbx})^g$.
	Since $Dp(\y)[\x]$ is $\x$-linear, Lemma~\ref{lem:hyp jac matrix} implies there exists a matrix, 
	$\hypj{p}\in M_g(\C\fralg{\bbz\lra \bby})$,
	such that
	\[
		\vecc[n](Dp(Y)[X]) = \vecc[n](X)\cdot \hypj{p}\left(Y^T\otimes I_n, I_n\otimes Y\right),
	\]
	for all $X,Y\in M_n(\C)^g$ and $n\in \Z^+$.
	We define $\hypj{p}$ to be the \dfn{hypo-Jacobian matrix} of $p$.
	The hypo-Jacobian matrix is unique.
\end{definition}

\begin{remark}
	Hypo-Jacobian matrices satisfy the chain rule.
	Namely,
	\[
		\hypj{\alpha\circ \beta}(\z,\y) = \hypj{\beta}(\z,\y)\hypj{\alpha}(\beta^T(\z),\beta(\y)),
	\]
	for all $\alpha,\beta\in (\C\fax_+)^g$.
\end{remark}

Any endomorphism of the free associative algebra $\C\fax$ has a Jacobian matrix (see \cite{DL82} and \cite{Sch85}) that
exactly corresponds with the hypo-Jacobian matrix found in this section.
The Jacobian matrix of an endomorphism is a matrix over $\C\fralg{\bbz}^{opp}\otimes \C\fax$, where
$\C\fralg{\bbz}^{opp}$ is the opposite ring of $\C\fralg{\bbz}$ (the order of multiplication is reversed).
The construction of the hypo-Jacobian matrix sends terms of the form $U(\y)x_i V(\y)$ to $U^T(\z)\otimes V(\y)$.
Since we can view the map $U\mapsto U^T$ as the canonical anti-isomorphism from $\C\fralg{\bbz}\to \C\fralg{\bbz}^{opp}$, we
see that the hypo-Jacobian matrix of a polynomial mapping and the Jacobian matrix of an endomorphism of $\C\fax$ are indeed the same.

%\begin{proof}
%	
%	Take $\alpha,\beta\in (\C\fax_+)^g$ with hypo-Jacobian matrices, $\hypj{\alpha}$ and $\hypj{\beta}$, respectively.
%	The chain rule for free derivatives tells us that $D(\alpha\circ \beta)(\y)[\x] = D\alpha(\beta(\y))[D\beta(\y)[\x]]$.
%	Hence,
%	\begin{align*}
%		&\vecc[n](D\alpha\circ \beta(Y)[X])\\
%			&=\vecc[n]\left(D\alpha(\beta(Y))[D\beta(Y)[X]]\right) \\
%			&= \vecc[n]\left(D\beta(Y)[X]\right)\cdot \hypj{\alpha}\left(\beta(Y)^T\otimes I_n, I_n\otimes \beta(Y)\right)\\
%			&= \vecc[n](X)\cdot \hypj{\beta}\left(Y^T\otimes I_n,I_n\otimes Y\right)
%				\hypj{\alpha}\left(\beta(Y)^T\otimes I_n, I_n\otimes \beta(Y)\right)\\
%			&= \vecc[n](X)\cdot \hypj{\beta}\left(Y^T\otimes I_n,I_n\otimes Y\right)
%				\hypj{\alpha}\left(\beta^T(Y^T)\otimes I_n, I_n\otimes \beta(Y)\right),
%	\end{align*}
%	for all $X,Y\in M_n(\C)^g$.
%	Thus,
%	\[
%		\hypj{\alpha\circ \beta}(\z,\y) = \hypj{\beta}(\z,\y)\hypj{\alpha}(\beta^T(\z),\beta(\y)),
%	\]
%	as desired.
%\end{proof}

\THMmatfreeinverse

\begin{proof}
	Suppose $p$ is injective, $q$ is its inverse and $F = (Dp(\y)[\x],\y)$ is the scion of $p$.
	Letting $G$ be the inverse of $F$, Lemma~\ref{lem:F poly inv} shows $G(\x,\y) = (Dq(p(\y))[\x],\y)$.
	Corollary~\ref{cor:F poly inv} implies $G$ is a free polynomial, hence $Dq(p(\y))[\x]$ is a free polynomial.
	Since $Dq(p(\y))[\x]$ is $\x$-linear, Lemma~\ref{lem:hyp jac matrix} implies there exists a matrix 
	$\mathcal{J}\in M_g(\C\fralg{\bbz\lra\bby})$, such that
	\[
		\vecc[n](Dq(p(Y))[X]) = \vecc[n](X)\cdot \mathcal{J}(Y^T\otimes I_n,I_n\otimes Y).
	\]
	The chain rule tells us $Dq\circ p(\y)[\x] = Dq(p(\y))[Dp(\y)[\x]] = \x$, thus
	\begin{align*}
		\vecc[n](X) 
			&= \vecc[n](Dp(Y)[X])\cdot \mathcal{J}(Y^T\otimes I_n,I_n\otimes Y)\\
			&= \vecc[n](X)\cdot (\hypj{p}\cdot \mathcal{J})(Y^T\otimes I_n,I_n\otimes Y).
	\end{align*}
	Next, $Dp\circ q(p(\y))[\x] = Dp(\y)[Dq(p(\y))[\x]] = \x$, hence
	\begin{align*}
		\vecc[n](X) 
			&= \vecc[n](Dq(p(Y))[X])\cdot \hypj{p}(Y^T\otimes I_n,I_n\otimes Y)\\
			&= \vecc[n](X)\cdot (\mathcal{J}\cdot \hypj{p})(Y^T\otimes I_n,I_n\otimes Y).
	\end{align*}
	Thus,
	\begin{align*}
		\vecc[n](X)
			&= \vecc[n](X)\cdot (\hypj{p}\cdot \mathcal{J})(Y^T\otimes I_n,I_n\otimes Y)\\
			&= \vecc[n](X)\cdot (\mathcal{J}\cdot \hypj{p})(Y^T\otimes I_n,I_n\otimes Y),
	\end{align*}
	for all $X,Y\in M_n(\C)^g$, and $n\in\Z^+$.
	In other words, $(\hypj{p})^{-1} = \mathcal{J}\in M_g(\C\fralg{\bbz\lra \bby})$.

	Conversely suppose $(\hypj{p})^{-1}\in M_g(\C\fralg{\bbz\lra\bby})$.
	Let $\hat{G}$ be the free polynomial defined by
	\[
		\hat{G}(Y)[X] = \vecc[n]^{-1}\left(\vecc[n](X)\cdot(\hypj{p})^{-1}(Y^T\otimes I_n,I_n\otimes Y)\right),
	\]
	for all $X,Y\in M_n(\C)^g$ and $n\in\Z^+$.
	Observe
	\begin{align*}
		\vecc[n](X) 
			&= \vecc[n](X)\cdot \left(\hypj{p}\cdot(\hypj{p})^{-1}\right)(Y^T\otimes I_n,I_n\otimes Y)\\
			&= \vecc[n](Dp(Y)[X])\cdot(\hypj{p})^{-1}(Y^T\otimes I_n,I_n\otimes Y)\\
			&= \vecc[n](\hat{G}(Y)[Dp(Y)[X]]).
	\end{align*}
	Hence, $X = \hat{G}(Y)[Dp(Y)[X]]$ and $\x = \hat{G}(\y)[Dp(\y)[\x]]$.
	On the other hand,
	\begin{align*}
		\vecc[n](X) 
			&= \vecc[n](X)\cdot \left((\hypj{p})^{-1}\cdot \hypj{p}\right)(Y^T\otimes I_n,I_n\otimes Y)\\
			&= \vecc[n](\hat{G}(Y)[X])\cdot \hypj{p}(Y^T\otimes I_n,I_n\otimes Y)\\
			&= \vecc[n](Dp(Y)[\hat{G}(Y)[X]]).
	\end{align*}
	Hence, $X = Dp(Y)[\hat{G}(Y)[X]]$ and consequently, $\x = Dp(\y)[\hat{G}(\y)[\x]]$.
	By setting $G(\x,\y) = (\hat{G}(\y)[\x],\y)$, we get 
	\[
		G(F(\x,\y)) = G(Dp(\y)[\x],\y) = (\hat{G}(\y)[Dp(\y)[\x]],\y) = (\x,\y),
	\]
	and 
	\[
		F(G(\x,\y)) = F(\hat{G}(\y)[\x],\y) = (Dp(\y)[\hat{G}(\y)[\x]],\y) = (\x,\y).
	\]
	Thus, $G$ is the inverse of $F$.
	Therefore, by Corollary~\ref{cor:F poly inv}, $p$ is an injective free polynomial.
\end{proof}

Before we finally move on to the proof of Theorem~\ref{thm:Free Groth Thm} we connect the composition of polynomial mappings
to the composition of endomorphisms of $\C\fax$.

\begin{definition}
	Suppose $p\in (\C\fax)^g$ is a free polynomial mapping and let $\phi:\C\fax\to\C\fax$ be an algebra homomorphism.
	We say $\phi$ is \dfn{induced by} $p$ if $\phi(x_i) = p_i(\x)$, for $1\leq i\leq g$.
	Similarly, we say $p$ is \dfn{induced by} $\phi$ if $p(\x) = (\phi(x_1),\dots, \phi(x_g))$.
\end{definition}

\begin{lemma}
	\label{lem:induced comp}
	Suppose $\phi,\psi:\C\fax\to\C\fax$ are algebra homomorphisms.
	If $p,q$ are the induced polynomial mappings of $\phi$ and $\psi$, respectively, then
	\[
		(q\circ p)(\x) = \left((\phi\circ\psi)(x_1),  \dots,  (\phi\circ\psi)(x_g)\right).
	\]
\end{lemma}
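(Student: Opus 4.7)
The plan is to verify the identity componentwise by unwinding definitions; no real obstacle is anticipated, since the statement is essentially a tautological consequence of ``a homomorphism is determined by where it sends the generators.''

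First I would reduce to showing, for each $1\le i\le g$, the single coordinate equality
\[
q_i(p(\x)) \;=\; (\phi\circ\psi)(x_i).
\]
By hypothesis $\psi(x_i)=q_i(\x)$, so $(\phi\circ\psi)(x_i)=\phi(q_i(\x))$, and it suffices to prove $\phi(q_i(\x))=q_i(p(\x))$.

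Next I would invoke that $\phi$ is a unital $\C$-algebra homomorphism with $\phi(x_j)=p_j(\x)$ for every $j$. Writing $q_i=\sum_{w\in\fax} c_w\, w$ as a finite $\C$-linear combination of words $w=x_{j_1}\cdots x_{j_k}$, linearity and multiplicativity of $\phi$ give
\[
\phi(q_i(\x)) \;=\; \sum_{w} c_w\, \phi(x_{j_1})\cdots\phi(x_{j_k})
\;=\; \sum_{w} c_w\, p_{j_1}(\x)\cdots p_{j_k}(\x)
\;=\; q_i(p(\x)),
\]
where the last equality is precisely the definition of evaluating the free polynomial $q_i$ at the $g$-tuple $p(\x)=(p_1(\x),\dots,p_g(\x))$. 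Combining with the observation of the previous paragraph yields $q_i(p(\x))=(\phi\circ\psi)(x_i)$ for each $i$, which is the claimed tuple identity $(q\circ p)(\x)=\bigl((\phi\circ\psi)(x_1),\dots,(\phi\circ\psi)(x_g)\bigr)$.

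The only subtlety worth flagging is notational: the order reversal ``composition of mappings $q\circ p$ corresponds to $\phi\circ\psi$'' (not $\psi\circ\phi$) comes from the fact that evaluating a polynomial at a tuple is the same operation as applying the homomorphism that sends the generators to that tuple, and substitution composes in the opposite order from function composition of the induced endomorphisms when one tracks what happens to the generators. This is already baked into the definition of ``induced by,'' so the proof is genuinely a one-line verification once the coordinates are expanded as above.
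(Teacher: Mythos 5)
Your proof is correct and is precisely the routine verification from definitions that the paper itself omits (its ``proof'' is just ``verified rather easily from definitions; details left to the reader''). The key step --- that evaluating $q_i$ at the tuple $(\phi(x_1),\dots,\phi(x_g))$ equals $\phi(q_i)$ because $\phi$ is a unital algebra homomorphism --- is exactly right, and your closing remark on why the composition order is $\phi\circ\psi$ rather than $\psi\circ\phi$ is a worthwhile clarification.
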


\begin{proof}
	This is verified rather easily from definitions. The details are left to the reader.
%	Let $q_i = \sum_w q_w^i w$ and observe
%	\[
%		(\phi\circ \psi)(x_i) = \phi(q_i(\x)) = \phi\Big(\sum_{w\in\fax} q_w^i w \Big) = \sum_{w\in\fax} q^i_w \phi(w).
%	\]
%	However, if $w = x_{i_1}\dots x_{i_n}$ then $\phi(w) = p_{i_1}(\x)\dots p_{i_n}(\x) = w(p(\x))$.
%	Hence,
%	\[
%		(\phi\circ \psi)(x_i) = \sum_{w\in\fax} q^i_w \phi(w) = \sum_{w\in\fax} q^i_w w(p(\x)) = (q_i\circ p)(\x).
%	\]
%	Therefore,
%	\[
%		\left((\phi\circ\psi)(x_1), \, \dots, \, (\phi\circ\psi)(x_g) \right) 
%			= \left((q_1\circ p)(\x), \, \dots, \, (q_g\circ p)(\x)\right) = (q\circ p)(\x).
%	\]
\end{proof}

%\begin{proposition}
%	A free polynomial mapping $p$ is injective if and only if its induced endomorphism $\phi:\C\fax\to\C\fax$ is epic.
%\end{proposition}
%
%\begin{proof}
%	Suppose $p$ is injective.
%	Let $\alpha,\beta$ be endomorphisms such that $\alpha\circ \phi = \beta\circ \phi$ and let $a,b$ be their respective induced mappings.
%	By Lemma~\ref{lem:induced comp},
%	\[
%		(p\circ a)(\x) = \left((\alpha\circ\phi)(x_1), \, \dots, \, (\alpha\circ\phi)(x_g)\right)
%		 = \left((\beta\circ\phi)(x_1), \, \dots, \, (\beta\circ\phi)(x_g)\right) = (p \circ b)(\x).
%	\]
%	Since $p$ is injective, $(p\circ a)(\x) = (p\circ b)(\x)$ implies $a = b$.
%	Thus, $\alpha(x_i) = a_i(\x) = b_i(\x) = \beta(x_i)$ and $\alpha = \beta$.
%	Therefore, $\phi$ is epic.
%\end{proof}

If $\phi$ is an endomorphism of $\C\fax$ then the Jacobian matrix of $\phi$ is a $g\times g$ matrix over $\C\fralg{\bbz}^{opp}\otimes \C\fax$.
More specifically, if $p$ is the polynomial mapping induced by $\phi$, then the Jacobian matrix of $\phi$ is found by applying the
natural anti-isomorphism $M_g(\C\fralg{\bbz}\otimes \C\fax)\to M_g(\C\fralg{\bbz}^{opp}\otimes \C\fax)$ to $\hypj{p}$.

\begin{theorem}
	\label{thm:Pre-Free Groth Thm}
	Suppose $p:M(\C)^g\to M(\C)^g$ is a free polynomial mapping.
	The following are equivalent;
	\begin{enumerate}[label=(\roman*)]
		\item $p$ is injective;
		\item $p$ is bijective;
		\item $Dp(Y)$ is a nonsingular map for all $Y\in M_n(\C)^g$ and all $n\in\Z^+$;
		\item $\hypj{p}$ is invertible;
		\item $p^{-1}$ exists and is a free polynomial.
	\end{enumerate}
\end{theorem}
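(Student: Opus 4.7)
The plan is to assemble the five equivalences by combining three already-established ingredients rather than proving a long linear chain. First, Pascoe's Free Jacobian Conjecture (Theorem~\ref{thm:Free Jacobian Conjecture}) gives the equivalence of (i), (ii), and (iii) directly. Second, Theorem~\ref{thm:matrix free inverse function theorem} gives (i)~$\Leftrightarrow$~(iv). Third, the implication (v)~$\Rightarrow$~(i) is immediate, since the defining identity $q\circ p(\x) = \x$ forces $p$ to be injective. So the only nontrivial new content is (iv)~$\Rightarrow$~(v), and everything else assembles into a circle through (i).

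For (iv)~$\Rightarrow$~(v) I would pass to the algebraic viewpoint. By Lemma~\ref{lem:induced comp}, composition of free polynomial mappings corresponds to composition of the induced algebra endomorphisms of $\C\fax$, so $p$ admits a free polynomial inverse precisely when the endomorphism $\phi:\C\fax\to\C\fax$ defined by $\phi(x_i) = p^i(\x)$ is an automorphism. The paragraph following Definition~\ref{def:hypo-Jacobian} observes that, under the canonical anti-isomorphism sending $\C\fralg{\bbz}\otimes \C\fax$ to $\C\fralg{\bbz}^{\mathrm{op}}\otimes\C\fax$ in the first factor, the hypo-Jacobian $\hypj{p}$ corresponds exactly to the Jacobian matrix of $\phi$ as defined in \cite{DL82} and \cite{Sch85}. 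Those works establish that an endomorphism of the free associative algebra is an automorphism if and only if its Jacobian matrix is invertible over the appropriate tensor algebra. Thus invertibility of $\hypj{p}$ as a bipartite polynomial matrix translates directly to $\phi$ being an automorphism, and the polynomial mapping induced by $\phi^{-1}$ is the desired free polynomial inverse of $p$, yielding (v).

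The main obstacle is the translation between the two Jacobian formalisms: the hypo-Jacobian acting on vectorized tuples of matrices versus the Jacobian of an endomorphism of $\C\fax$ with entries in $\C\fralg{\bbz}^{\mathrm{op}}\otimes\C\fax$. This is bookkeeping, but subtle, because the opposite-ring structure on the first factor reverses the order of multiplication, which is precisely what the transpose operation in the construction of $\hypj{p}$ encodes: a generic summand $U(\y) x_i V(\y)$ of an entry of $Dp(\y)[\x]$ produces the term $U^T(\z)\otimes V(\y)$ in $\hypj{p}$, and $U\mapsto U^T$ realizes the canonical anti-isomorphism $\C\fralg{\bbz}\to\C\fralg{\bbz}^{\mathrm{op}}$. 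Once this identification is verified carefully, the Dicks--Lewin/Schofield result imports without modification, and the full equivalence of the five conditions follows by combining it with Theorems~\ref{thm:Free Jacobian Conjecture} and~\ref{thm:matrix free inverse function theorem} as outlined above.
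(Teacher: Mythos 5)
Your proposal is correct and follows essentially the same route as the paper: (i)$\Leftrightarrow$(ii)$\Leftrightarrow$(iii) from the Free Jacobian Conjecture, (i)$\Leftrightarrow$(iv) from Theorem~\ref{thm:matrix free inverse function theorem}, the trivial (v)$\Rightarrow$(i), and (iv)$\Rightarrow$(v) by identifying $\hypj{p}$ with the Jacobian of the induced endomorphism of $\C\fax$ and invoking the Dicks--Lewin and Schofield results to conclude that endomorphism is an automorphism, whose inverse induces the polynomial inverse of $p$ via Lemma~\ref{lem:induced comp}. The paper is only slightly more explicit in splitting the last step into ``epic'' (Proposition~3.1 of \cite{DL82}) followed by ``automorphism'' (Theorem~12.7 of \cite{Sch85}).
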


\begin{proof}
	(i)$\Leftrightarrow$(ii)$\Leftrightarrow$(iii) is Theorem~\ref{thm:Free Jacobian Conjecture}. 	
	(i)$\Leftrightarrow$(iv) is Theorem~\ref{thm:matrix free inverse function theorem}.	
	(v)$\Rightarrow$(i) is clear.
	
	To show (iv)$\Rightarrow$(v), we assume $\hypj{p}$ is invertible. 
	Let $\phi$ be the endomorphism of $\C\fax$ induced by $p$ (so $\phi(x_i) = p_i$ for $1\leq i\leq g$).
	Since $\hypj{p}$ is invertible, it follows that that Jacobian matrix of $\phi$ is invertible.
	Thus, Proposition~3.1 in \cite{DL82} implies $\phi$ is an epic endomorphism and Theorem~12.7 in \cite{Sch85} 
		implies $\phi$ is an automorphism of $\C\fax$.
	So $\phi^{-1}$ exists and is an automorphism itself.
	
	Let $q = (\phi^{-1}(x_1),\dots, \phi^{-1}(x_g))$ be polynomial mapping induced by $\phi^{-1}$.
	By Lemma~\ref{lem:induced comp}, 
	\[
		(x_1,\dots,x_g) = ((\phi\circ \phi^{-1})(x_1), \dots, (\phi\circ \phi^{-1})(x_g)) = (q\circ p)(\x)
	\]
	and 
	\[
		(x_1,\dots,x_g) = ((\phi^{-1}\circ \phi)(x_1), \dots, (\phi^{-1}\circ \phi)(x_g)) = (p\circ q)(\x).
	\]
	Thus, $p$ and $q$ are inverse mappings.
	Therefore, $p$ is injective.
\end{proof}

\THMfreeGroth

\begin{proof}
	This is exactly (i)$\Rightarrow$(v) in Theorem~\ref{thm:Pre-Free Groth Thm}.
\end{proof}

\section{Computing inverses}
	\label{sec:compute}
Suppose $p:M(\C)^g\to M(\C)^g$ is a free polynomial, $F$ is its scion and $q$ and $G$ are the inverses 
(when they exist) of $p$ and $F$, respectively.
By either Corollary~\ref{cor:F poly inv} or Theorem~\ref{thm:matrix free inverse function theorem} we know that
$p$ is injective if and only if $G$ is free polynomial.
Recall that if $p$ has a free polynomial inverse $q$, then Lemma~\ref{lem:F poly inv} tells us that
$\deg(q)\leq \deg(G)\leq \deg(p)\deg(q)$.
Thus, an upper bound on $\deg(G)$ gives us an upper bound on the possible degree of $q$.

\begin{definition}
	Let $V_t(0) = 0$ and for any nonzero rational function $r\in\C\skf{\bbx}$ define
	\[
		V_t(r(t\x)) = \min\set{\max\set{\deg_t(\alpha(\x)[t]),\deg_t(\beta(\x)[t])}:r(t\x) = \alpha(\x)[t]\beta(\x)[t]^{-1}}.
	\]
	If $M\in M_{m\times n}(\C\skf{\bbx})$ then define
	\[
		V_t(M(t\x)) = \max\set{V_t(M(t\x)_{i,j}):1\leq i\leq m, 1\leq j\leq n}.
	\]
	Note that if $r\in \C\fax$ then $V_t(r(t\x)) = \deg(r(\x))$.
\end{definition}

\begin{remark}
	It is straightforward to see that $\abs{\deg_t(r(t\x))}\leq V_t(r(t\x))$, for any nonzero rational function.
	By appealing to evaluations on generic matrices we get that
	\[
		V_t(r(t\x)s(t\x)) \leq V_t(r(t\x)) + V_t(s(t\x))
	\]
	and
	\[
		V_t(r(t\x)+s(t\x)) \leq V_t(r(t\x))+V_t(s(t\x)).
	\]
	Hence, if $M\in M_{\ell\times m}(\C\skf{\bbx})$ and $N\in M_{m\times n}(\C\skf{\bbx})$ then
	\begin{align*}
		V_t(M(t\x)N(t\x)) &\leq \max_{i,k}\Big\{\sum_{j=1}^m V_t\left(M(t\x)_{i,j}N(t\x)_{j,k}\right)\Big\}\\
			&\leq \max_{i,k}\Big\{\sum_{j=1}^m V_t\left(M(t\x)_{i,j}\right) + V_t\left(N(t\x)_{j,k}\right)\Big\}\\
			&\leq m\cdot V_t(M(t\x) + m\cdot V_t(N(t\x))).
	\end{align*}
\end{remark}

%Let $M$ be a block matrix and observe
%\begin{align*}
%	\abs{\nu_t(A-bd^{-1}c)}&\leq V_t(A-bd^{-1}c)\leq V_t(A)+V_t(bd^{-1}c)\\
%		&\leq V_t(M) + 3nV_t(M) = (3n+1)V_t(M)
%\end{align*}
%and
%\begin{align*}
%	\abs{\nu_t(1+cNbd^{-1})}&\leq V_t(1+cNbd^{-1}) = V_t(d^{-1}) + V_t(cNb)\\
%		&\leq V_t(d^{-1}) + \sum_{i=1}^n \left(V_t(c_i)\right) + \sum_{j,k=1}^n \left(V_t(N_{j,k}) + V_t(b_k)\right)\\
%		&\leq (2n+1)V_t(M) + n^2V_t(N)
%\end{align*}

%$V_t(N^{-1})\leq 3^n\mathfrak{f}(n)V_t(N)$ 
%
%then
%\begin{align*}
%	(2n+1)V_t(M) + n^2V_t(N)&\leq (2n+1)V_t(M) + (3^{n}\mathfrak{f}(n))(3n^3+n^2) V_t(M)\\
%		&= (3^{n+1}n^3+3^nn^2)\mathfrak{f}(n) + 2n+1)V_t(M)\\
%		&\leq 3^{n+1}(n^3+n^2+2n+1)\mathfrak{f}(n)V_t(M)\\
%		&\leq 3^{-1}(n+1)^3\mathfrak{f}(n)V_t(M) = 3^{n+1}\mathfrak{f}(n+1)V_t(M).
%\end{align*}

\begin{lemma}
	\label{lem:rat mat inv deg bound}
	Let $\mathfrak{f}(1) = 1$ and for any integer $n\geq 1$, define $\mathfrak{f}(n+1) = (n+1)^3\mathfrak{f}(n)$.
	If $M,M^{-1}\in M_n(\C\skf{\bbx})$ then $\abs{\deg_t(M^{-1}(t\x))}\leq 3^{n}\mathfrak{f}(n)V_t(M(t\x))$.
\end{lemma}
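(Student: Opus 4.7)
The plan is to induct on $n$. The base case $n=1$ is immediate: if $M=(m)$ with $m\in\C\skf{\bbx}$ nonzero, then $\deg_t(m(t\x)^{-1})=-\deg_t(m(t\x))$, so $\abs{\deg_t(M^{-1}(t\x))}=\abs{\deg_t(m(t\x))}\leq V_t(m(t\x))=V_t(M(t\x))$, which fits comfortably inside the bound $3V_t(M(t\x))$.

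For the inductive step from $n$ to $n+1$, set $V:=V_t(M(t\x))$ and $L:=3^{n}\mathfrak{f}(n)V$. Permuting rows and columns of $M$ preserves both $V_t(M(t\x))$ and $\max_{i,j}\abs{\deg_t(M^{-1}(t\x)_{i,j})}$, since it merely permutes the entries of $M$ and of $M^{-1}$. A standard rank argument over the skew field $\C\skf{\bbx}$ shows that any invertible $(n+1)\times(n+1)$ matrix contains an invertible $n\times n$ sub-matrix: delete any row to obtain an $n\times(n+1)$ block of row rank $n$, whence $n$ linearly independent columns exist. After permutation I may therefore assume $M=\begin{pmatrix} A & B \\ C & D\end{pmatrix}$ with $A\in M_n(\C\skf{\bbx})$ invertible and $D$ a scalar. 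The Schur complement $S:=D-CA^{-1}B$ is then invertible, and block inversion gives
\[
    M^{-1}=\begin{pmatrix} A^{-1}+A^{-1}BS^{-1}CA^{-1} & -A^{-1}BS^{-1}\\ -S^{-1}CA^{-1} & S^{-1}\end{pmatrix}.
\]

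By the inductive hypothesis applied to $A$, every entry satisfies $\abs{\deg_t(A^{-1}(t\x)_{ij})}\leq L$, while the entries of $B$, $C$, $D$ are entries of $M$ and so have $\abs{\deg_t}$ at most $V$ after substituting $t\x$. Combining the elementary estimates $\abs{\deg_t(rs)}\leq \abs{\deg_t(r)}+\abs{\deg_t(s)}$, $\abs{\deg_t(r+s)}\leq \max\set{\abs{\deg_t(r)},\abs{\deg_t(s)}}$, and $\abs{\deg_t(r^{-1})}=\abs{\deg_t(r)}$ (all valid on $\C\skf{\bbx}(t)$), I read off successively
\[
    \abs{\deg_t((CA^{-1}B)(t\x))}\leq 2V+L,\qquad \abs{\deg_t(S^{\pm 1}(t\x))}\leq 2V+L,
\]
whence the worst block of $M^{-1}$ (the top-left) has entries of absolute $t$-degree at most $3L+4V$. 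Verifying $3L+4V=3^{n+1}\mathfrak{f}(n)V+4V\leq 3^{n+1}(n+1)^{3}\mathfrak{f}(n)V=3^{n+1}\mathfrak{f}(n+1)V$ reduces to $1+4/(3^{n+1}\mathfrak{f}(n))\leq(n+1)^{3}$, which holds for all $n\geq 1$ since the right side is already at least $8$.

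The only nontrivial ingredient beyond routine degree bookkeeping is the existence of an invertible $n\times n$ principal sub-block after a permutation; the rest is just tracking $\abs{\deg_t}$ through the Schur complement identity. An alternative to the rank argument is to permute so that a nonzero (hence invertible) entry of $M^{-1}$ sits in the $(n+1,n+1)$ slot and then invoke the quasideterminant identity $(M^{-1})_{n+1,n+1}=S^{-1}$ to conclude that $A$ is invertible. The blow-up factor $\mathfrak{f}(n+1)/\mathfrak{f}(n)=(n+1)^{3}$ is the price paid for the fact that each entry of $M^{-1}$ involves $A^{-1}$ twice and the scalar $S^{-1}$ once, with cumulative degree bounds compounding through the $n^{2}$-many summands in the middle block.
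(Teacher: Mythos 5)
Your overall architecture (induction via a Schur complement of an $n\times n$ block against a scalar corner) matches the paper's, but there is a genuine gap in the inductive step: the inequality $\abs{\deg_t(r+s)}\leq \max\set{\abs{\deg_t(r)},\abs{\deg_t(s)}}$ is false in $\C\skf{\bbx}(t)$. The inequality $\deg_t(r+s)\leq\max\set{\deg_t(r),\deg_t(s)}$ only bounds the degree from above; cancellation can push $\deg_t(r+s)$ far below $-\max\set{\abs{\deg_t(r)},\abs{\deg_t(s)}}$. For instance $r=t^2+t^{-3}$ and $s=-t^2$ have $\deg_t(r)=\deg_t(s)=2$ but $\abs{\deg_t(r+s)}=3$, and the drop can be made arbitrarily large. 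This breaks your estimate for the Schur complement $S=D-CA^{-1}B$: from $\abs{\deg_t}$-bounds on the summands you cannot bound $\abs{\deg_t(S)}$, hence not $\abs{\deg_t(S^{-1})}=\abs{\deg_t(S)}$, and $S^{-1}$ is itself an entry of $M^{-1}$. For the same reason the inductive hypothesis you carry --- a bound on $\abs{\deg_t}$ of the entries of an $n\times n$ inverse --- is too weak to propagate.

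The paper's proof repairs exactly this by strengthening the induction to the claim $V_t(M^{-1}(t\x))\leq 3^n\mathfrak{f}(n)V_t(M(t\x))$. The quantity $V_t$ records the minimal $\max$ of numerator and denominator $t$-degrees over fraction representations, so it genuinely satisfies $V_t(r+s)\leq V_t(r)+V_t(s)$, $V_t(rs)\leq V_t(r)+V_t(s)$, and $V_t(r^{-1})=V_t(r)$ (cancellation cannot hurt a bound on both numerator and denominator), while $\abs{\deg_t}\leq V_t$ lets one read off the stated conclusion only at the very end. Your preliminary reduction (permuting to place an invertible $n\times n$ block, rather than the paper's multiplication by a constant invertible matrix to make the corner entry nonzero) is fine and arguably cleaner, but to make the argument correct you must replace $\abs{\deg_t}$ by $V_t$ throughout the induction, with the attendant additive (rather than max-type) losses at each sum --- which is precisely where the cubic growth factor $(n+1)^3$ in $\mathfrak{f}$ comes from.
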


\begin{proof}
	We claim $V_t(M^{-1}(t\x))\leq 3^{n}\mathfrak{f}(n)V_t(M(t\x))$ and use induction to prove the claim.

	If $n=1$ then $M,M^{-1}\in\C\skf{\bbx}$ and $\deg_t(M(t\x)^{-1}) = -\deg_t(M(t\x))$,
	hence $V_t(M(t\x)^{-1}) = V_t(M(t\x))\leq 3 V_t(M(t\x))$.

	Now suppose the statement holds for $n$ and consider $M,M^{-1}\in M_{n+1}(\C\skf{\bbx})$.
	Write
	\[
		M = \bpm A & b \\ c & d \epm
	\]
	where $A\in M_n(\C\skf{\bbx})$, $b\in M_{n\times 1}(\C\skf{\bbx})$, $c\in M_{1\times n}(\C\skf{\bbx})$, and $d\in\C\skf{\bbx}$.
	
	If $S\in \textup{GL}_{n+1}(\C)$ then $MS$ is invertible.
	Hence, we may assume $d$ is nonzero and thus, $d^{-1}$ exists.
	Observe
	\[
		V_t(A(t\x)-b(t\x)d(t\x)^{-1}c(t\x))\leq V_t(A)+V_t(bd^{-1}c)\leq (3n+1)V_t(M(t\x)).
	\]
	Hence
	\[
		V_t((A-bd^{-1}c)^{-1})\leq 3^n(3n+1)\mathfrak{f}(n)V_t(M).
	\]
	Set $N(t\x) = (A(t\x)-b(t\x)d(t\x)^{-1}c(t\x))^{-1}$ and observe
	\begin{align*}
		V_t(d^{-1}(1+cNbd^{-1}))&\leq V_t(M)+V_t(1+cNbd^{-1}) = 2V_t(M) + V_t(cNb)\\
			&\leq 2V_t(M) + \sum_{i=1}^n \left(V_t(c_i)\right) + \sum_{j,k=1}^n \left(V_t(N_{j,k}) + V_t(b_k)\right)\\
			&\leq 2(n+1)V_t(M) + n^2V_t(N).
	\end{align*}
	Applying the induction hypothesis,
	\begin{align*}
		2(n+1)V_t(M) + n^2V_t(N)&\leq (2n+1)V_t(M) + n^2(3^{n}(3n+1)\mathfrak{f}(n) V_t(M))\\
			&= ((3^{n+1}n^3+3^nn^2)\mathfrak{f}(n) + 2n+2)V_t(M)\\
			&\leq 3^{n+1}(n^3+n^2+2n+2)\mathfrak{f}(n)V_t(M)\\
			&\leq 3^{n+1}(n+1)^3\mathfrak{f}(n)V_t(M) = 3^{n+1}\mathfrak{f}(n+1)V_t(M).
	\end{align*}
	Since the inverse of $M$ is determined from the Schur complement, we have proven that
	$V_t(M^{-1}(t\x))\leq 3^{n}\mathfrak{f}(n)V_t(M(t\x))$.
	
	Finally, since $\abs{\deg_t(r(t\x))}\leq V_t(r(t\x))$ for any nonzero rational $r$, we have
	\[
		\abs{\deg_t(M^{-1}(t\x))}\leq 3^{n}\mathfrak{f}(n)V_t(M(t\x)),
	\]
	as desired.
\end{proof}

%\begin{proof}
%	If $n=1$ then $M,M^{-1}\in\C\skf{\bbx}$ and $\nu_t(M(t\x)^{-1}) = -\nu_t(M(t\x))$,
%	hence $\abs{\nu_t(M(t\x)^{-1})} = (3-2)\abs{\nu_t(M(t\x))}$.
%	
%
%	Now suppose the statement holds for $n$ and consider $M,M^{-1}\in M_{n+1}(\C\skf{\bbx})$.
%	Write
%	\[
%		M = \bpm A & b \\ c^T & d \epm
%	\]
%	where $A\in M_n(\C\skf{\bbx})$, $b,c\in M_{n\times 1}(\C\skf{\bbx})$ and $d\in\C\skf{\bbx}$.
%	
%	If $S\in \textup{GL}_{n+1}(\C)$ then $MS$ is invertible.
%	Hence, we may assume $d$ is nonzero and thus, $d^{-1}$ exists.
%	Consider
%	\[
%		\abs{\nu_t(A(t\x)-b(t\x)d(t\x)^{-1}c(t\x)^T)}\leq 3\abs{\nu_t(M(t\x))}
%	\]
%	hence
%	\[
%		\abs{\nu_t((A(t\x)-b(t\x)d(t\x)^{-1}c(t\x)^T)^{-1})}\leq (3^n-2)3\abs{\nu_t(M(t\x))}.
%	\]
%	Set $N(t\x) = (A(t\x)-b(t\x)d(t\x)^{-1}c(t\x)^T)^{-1}$ and observe
%	\begin{align*}
%		\abs{\nu_t(d^{-1} + d^{-1}c^T N bd^{-1})} &\leq (3^{n+1}-6+4)\abs{\nu_t(M(t\x))}\\ 
%		&= (3^{n+1}-2)\abs{\nu_t(M(t\x))}.
%	\end{align*}
%	Since the inverse of $M$ is determined from the Schur complement, the proof follows by induction.
%\end{proof}

\begin{lemma}
	\label{lem:poly mat inv deg bound}
	If $M,M^{-1}\in M_n(\C\fax)$ then $\deg(M^{-1})\leq 3^n\mathfrak{f}(n)\deg(M)$.
\end{lemma}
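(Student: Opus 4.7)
The plan is to derive this directly from Lemma~\ref{lem:rat mat inv deg bound} by showing that, on polynomial matrices, both $V_t$ and $|\deg_t|$ collapse to the ordinary polynomial degree. Since $\C\fax \subset \C\skf{\bbx}$, any $M \in M_n(\C\fax)$ with $M^{-1} \in M_n(\C\fax)$ automatically satisfies the hypotheses of Lemma~\ref{lem:rat mat inv deg bound}, so that lemma applies and gives
\[
	|\deg_t(M^{-1}(t\x))| \leq 3^n \mathfrak{f}(n)\, V_t(M(t\x)).
\]
It then only remains to identify each side with an ordinary degree.

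First I would observe that for any free polynomial $p = \sum_w p_w w \in \C\fax$, substitution yields $p(t\x) = \sum_w p_w t^{|w|} w$, so $\deg_t(p(t\x)) = \deg(p)$. In particular, writing $p(t\x) = p(t\x) \cdot 1^{-1}$ (a trivial expression of $p(t\x)$ as a fraction of $t$-polynomials with coefficients in $\C\skf{\bbx}$), we have $V_t(p(t\x)) \leq \deg_t(p(t\x)) = \deg(p)$, and conversely any representation $p(t\x) = \alpha(\x)[t]\beta(\x)[t]^{-1}$ satisfies $\deg_t(p(t\x)) = \deg_t(\alpha) - \deg_t(\beta) \leq \max\{\deg_t(\alpha),\deg_t(\beta)\}$, which (taking the minimum over representations) shows equality. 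Applying this entrywise,
\[
	V_t(M(t\x)) = \max_{i,j} V_t(M_{ij}(t\x)) = \max_{i,j} \deg(M_{ij}) = \deg(M).
\]

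Next, since $M^{-1}$ is assumed to be a polynomial matrix, the same reasoning gives $\deg_t(M^{-1}_{ij}(t\x)) = \deg(M^{-1}_{ij}) \geq 0$ for each entry, so $\deg_t(M^{-1}(t\x)) = \max_{i,j}\deg_t(M^{-1}_{ij}(t\x)) = \deg(M^{-1})$ and this value is nonnegative, making the absolute value redundant. Chaining these identifications with the bound from Lemma~\ref{lem:rat mat inv deg bound} yields
\[
	\deg(M^{-1}) = |\deg_t(M^{-1}(t\x))| \leq 3^n \mathfrak{f}(n)\, V_t(M(t\x)) = 3^n \mathfrak{f}(n)\, \deg(M),
\]
which is the desired conclusion.

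The argument involves no real obstacle; the substantive work has already been done in Lemma~\ref{lem:rat mat inv deg bound}. The only thing to be careful about is the definition of $V_t$ on a polynomial, and in particular to verify that the minimum in the definition of $V_t$ is actually realized by the trivial denominator $\beta = 1$ (so that $V_t$ does not overshoot $\deg$). That verification is routine from the definition of $\deg_t$ on $\C\skf{\bbx}[t]$.
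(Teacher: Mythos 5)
Your proof is correct and follows essentially the same route as the paper: both reduce the statement to Lemma~\ref{lem:rat mat inv deg bound} by observing that $V_t$ and $\abs{\deg_t}$ coincide with the ordinary degree on (nonzero) polynomial matrices. The paper simply asserts these identifications (they are noted in the definition of $V_t$), whereas you verify them, which is a harmless elaboration.
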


\begin{proof}
	Since $M$ and $M^{-1}$ are matrices of polynomials, $V_t(M(t\x)) = \deg(M(t\x))$ and $V_t(M(t\x)^{-1}) =\deg(M(t\x)^{-1})$.
	Thus, by Lemma~\ref{lem:rat mat inv deg bound},
	\begin{align*}
		\deg(M(t\x)^{-1}) = \abs{\deg_t(M(t\x)^{-1})}\leq 3^n\mathfrak{f}(n)V_t(M) = 3^n\mathfrak{f}(n)\deg(M).
		\tag*{$\square$}
	\end{align*}
\end{proof}

The degree bound in Lemma~\ref{lem:poly mat inv deg bound} is far from optimal.
However, to improve the degree bound in a significant manner would require an altogether different proof; 
the induction hypothesis cannot be applied to $(A-bd^{-1}c)^{-1}$ since it is not necessarily the inverse of a polynomial matrix.

Suppose $\B:\N\times \N\to\N$ is a function such that whenever $M\in M_n(\C\fax)$ and $M^{-1}\in M_n(\C\fax)$,
we have $\deg(M^{-1})\leq \B(n,\deg(M))$.
We call such a function a \dfn{PMID bound} (for Polynomial Matrix Inverse Degree).

\begin{theorem}
	\label{thm:deg p bound}
	Suppose $\B$ is a PMID bound.
	Let $p$ be a bijective free polynomial and let $q$ be its inverse.
	If $q$ is a free polynomial then $\deg(q)\leq \B(g,\deg(p)-1)+1$.
\end{theorem}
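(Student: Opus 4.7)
The plan is to bound $\deg(q)$ in terms of $\deg(J_p^{-1})$ and then apply the PMID hypothesis to $J_p$.

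First, since $p$ is bijective, Theorem~\ref{thm:PMI is poly} guarantees that $J_p$ has a polynomial matrix inverse $J_p^{-1}\in M_g(\C\fax)$. Because $J_p$ is a $g\times g$ polynomial matrix of degree $\deg(p)-1$, the PMID hypothesis yields $\deg(J_p^{-1})\leq \B(g,\deg(p)-1)$. Writing $d:=\deg(J_p^{-1})$, the theorem reduces to showing $\deg(q)\leq d+1$.

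Next, Proposition~\ref{prop:comp inv} realizes $q$ as the unique formal power series solution of the fixed-point equation $q(\x)=\x J_p^{-1}(q(\x))$; equivalently, as the unique solution of the proper algebraic polynomial $\bbq(\x)[\z]=\x J_p^{-1}(\z)$, which has total degree $1+d$. Writing $q=\x J_q$ and using the chain-rule identity $J_q(\x)=J_p^{-1}(q(\x))$ from Corollary~\ref{cor:inv mat reps}, the target reduces to proving $\deg(J_q)\leq d$.

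The principal obstacle is to leverage the polynomial hypothesis on $q$ to close this estimate, since naively the substitution $J_p^{-1}(q(\x))$ can have degree as large as $d\cdot\deg(q)$. I would invoke Lemma~\ref{lem:a gaps} through Remark~\ref{rem:q gaps ok}: the polynomiality of $q$ forces $q=\mfq^N$ for some $N$, together with the gap condition $\bbd_q^N>\deg(q)\deg(p)$. Combining the fixed-point identity $q(\x)=\bbq(\x)[q(\x)]$ with the fact that $\bbq$ is $\x$-linear and its $\z$-coefficient is the polynomial matrix $J_p^{-1}$ of degree $d$, a careful tracking of the top-degree monomials shows that any monomial of $q$ of degree above $d+1$ would be inconsistent with the fixed-point identity at those higher degrees, forcing $\deg(q)\leq d+1$. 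Chaining the inequalities then yields $\deg(q)\leq \B(g,\deg(p)-1)+1$.
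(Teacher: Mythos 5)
There is a genuine gap at the crucial step. Your argument reduces everything to the claim $\deg(q)\leq \deg(J_p^{-1})+1$, equivalently $\deg(J_q)\leq \deg(J_p^{-1})$, but the identity you have, $J_q(\x)=J_p^{-1}(q(\x))$, goes the wrong way: it expresses $J_q$ as a substitution of $q$ \emph{into} $J_p^{-1}$, and such substitutions generically increase degree (up to $\deg(J_p^{-1})\deg(q)$). The assertion that ``a careful tracking of the top-degree monomials'' rules out $\deg(q)>d+1$ is exactly the point that needs a proof, and none is given. Note that a monomial of $(J_p^{-1})_{s,j}(\z)$ of $z$-degree $1$ already contributes, after the substitution $z_i\mapsto q_i$, a term of degree $\deg(q)$ to $J_q$, so no contradiction falls out of degree bookkeeping at the top level. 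The appeal to Lemma~\ref{lem:a gaps} does not close this: the gap condition $\bbd_q^N>\deg(q)\deg(p)$ characterizes \emph{when} $q$ is a polynomial but gives no bound on $\deg(q)$ in terms of $\deg(J_p^{-1})$ (indeed, in the paper the gap lemma is only made effective in Lemma~\ref{lem:mfq max bound} \emph{after} Theorem~\ref{thm:deg p bound} is available). Also be aware that $\bbq(\x)[\z]=\x J_p^{-1}(\z)$ is $\x$-linear but not $\z$-affine linear, and Example~\ref{ex:classic alg} shows that this fixed-point scheme with a polynomial $J_p^{-1}$ can have a solution of unbounded degree; your sketch never isolates what additional leverage the polynomiality of $q$ provides.

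The paper avoids this obstruction by applying the PMID bound not to $J_p$ but to the hypo-Jacobian $\hypj{p}$, which also has degree $\deg(p)-1$ and whose inverse is a bipartite \emph{polynomial} matrix by Theorem~\ref{thm:Pre-Free Groth Thm}. The decisive advantage is the identity $\vecc[n](Dq(p(Y))[X])=\vecc[n](X)\cdot(\hypj{p})^{-1}(Y^T\otimes I,I\otimes Y)$: the composition with $p$ is already built into the object $Dq(p(\y))[\x]$, so one reads off $\deg(Dq(p(\y))[\x])=\deg\big((\hypj{p})^{-1}\big)+1$ with no substitution step, and Lemma~\ref{lem:F poly inv} supplies $\deg(q)\leq\deg(G)=\deg(Dq(p(\y))[\x])$. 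If you want to salvage your route, you must actually prove $\deg(J_q(p(\x)))\geq\deg(J_q)$ for the bijective polynomial $p$, i.e.\ that composing $J_q$ with $p$ cannot drop its degree; since composition with an automorphism \emph{can} drop the degree of an individual polynomial (e.g.\ $(x_2+x_1^2)\circ(x_1,x_2-x_1^2)=x_2$), this requires a genuine argument, not an appeal to genericity.
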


\begin{proof}
	We begin by noting that $\deg(Dp(\y)[\x]) = \deg(p)$. 
	Since $\hypj{p}$, the hypo-Jacobian of $p$, is constructed from $Dp(\y)[\x]$, we get that $\deg(\hypj{p}) = \deg(p)-1$.
	By Theorem~\ref{thm:Pre-Free Groth Thm} we know $(\hypj{p})^{-1}$ is a polynomial matrix since $p$ is injective.
	Hence, $\deg((\hypj{p})^{-1})\leq \B(g,\deg(p)-1)$.
	In fact, for all $n\in\Z^+$ and $X,Y\in M_n(\C)^g$,
	\[
		\vecc[n]\left(Dq(p(Y))[X]\right) 
			= \vecc[n](X)\cdot (\hypj{p})^{-1}(Y^T\otimes I_n, I_n\otimes Y).
	\]
	Thus, $\deg(Dq(p(\y))[\x]) = \deg((\hypj{p})^{-1})+1 \leq \B(g,\deg(p)-1)+1$.
	Lemma~\ref{lem:F poly inv} says $G = (Dq(p(\y))[\x],\y)$, where $G$ is the inverse of $F$, the scion of $p$.
	In fact, $\deg(q)\leq \deg(G)$. 
	Therefore
	\[
		\deg(q)\leq \deg(G) = \deg(Dq(p(\y))[\x]) \leq \B(g,\deg(p)-1)+1,
	\]
	as desired.
\end{proof}

Recall from Definition \ref{def:aux inv}, $\bbq(\x)[\z] = \x J_p(\z)$ and $\bbq^{\circ k+1} (\x)[\z] = \bbq^{\circ k}(\x)[\bbq(\x)[\z]]$.
For each $k$, we write
\[
	\bbq^{\circ k}(\x)[\z] = \sum_{w\in \fralg{\bbx\cup\bbz}} \rho^k_w w,
\]
set $\bbd_q^k = \bbd_z(\bbq^{\circ k}) = \inf\set{\abs{w}:\abs{w}_z>0\text{ and } \rho^k_w\neq 0}$ and write
$\bbq^{\circ k}(\x)[\z] = \mfq^k(\x) + r^k(\x)[\z]$, where
\[
	\mfq^k(\x) = \sum_{\substack{w\in\fax \\ \abs{w}<\bbd^k_q}} \rho^k_w w
	\quad \text{ and } \quad
	r^k(\x)[\z] = \sum_{\substack{w\in\fralg{\bbx\cup\bbz} \\ \abs{w}\geq\bbd^k_q}} \rho^k_w w.
\]
From Lemma~\ref{lem:ak and bbd} we know $\bbd_z^k>k$ and $(\mfq^k)_{k=1}^\infty$ is a sequence of polynomials converging to $q$ such that
	$\deg(\mfq^k)\nearrow \deg(q)$.
Moreover, Lemma~\ref{lem:a gaps} shows that $q$ is a polynomial if and only if there exists an $N$ such that
$\deg(p)\deg(\mfq^N)<\bbd^N_z$.

\begin{lemma}
	\label{lem:mfq max bound}
	Suppose $\B$ is a PMID bound.
	Let $p\in (\C\fax_+)^g$ and let $q$ be the compositional inverse of $p$.
	Set $B = \B(g,\deg(p)-1)+1$.
	Then $q$ is a free polynomial if and only if $J_p^{-1}\in M_g(\C\fax)$ and $\deg(\mfq^k)\leq B$ for all $k$.
\end{lemma}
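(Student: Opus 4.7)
The plan is to prove both directions separately, using Lemma~\ref{lem:a gaps} as the pivotal tool for characterizing polynomiality of the solution to a proper algebraic polynomial.

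For the forward implication, I assume $q$ is a free polynomial, so $p$ is bijective with polynomial inverse. Remark~\ref{rem:poly inv PMI} (a direct application of the chain rule for the Jacobian) immediately gives $J_p^{-1}\in M_g(\C\fax)$. Theorem~\ref{thm:deg p bound} then supplies the bound $\deg(q)\leq \B(g,\deg(p)-1)+1=B$. To obtain the uniform bound on $\deg(\mfq^k)$, I would unwind the construction of $\mfq^k$: it retains only the $x$-only monomials of length less than $\bbd_q^k$ from $\bbq^{\circ k}$, and by Lemma~\ref{lem:ak and bbd}(iii) its coefficients on such monomials agree with those of $q$. Since any nonzero coefficient of $\mfq^k$ is therefore a nonzero coefficient of $q$, this yields $\deg(\mfq^k)\leq \deg(q)\leq B$.

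For the reverse implication, assume $J_p^{-1}\in M_g(\C\fax)$ and $\deg(\mfq^k)\leq B$ for every $k$. Because $J_p^{-1}$ is a polynomial matrix, the auxiliary inverse $\bbq(\x)[\z]=\x J_p^{-1}(\z)$ is a polynomial with $\bbd_z(\bbq)>1$, hence a proper algebraic polynomial; by Remark~\ref{rem:q gaps ok}, Lemma~\ref{lem:a gaps} applies to the pair $(\bbq,q)$ with $b=p$ playing the role of the compositional inverse. To conclude that $q$ is a polynomial it suffices, via Lemma~\ref{lem:a gaps}(iii), to produce an $N\in\Z^+$ with $\bbd_q^N>\deg(\mfq^N)\deg(p)$. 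The hypothesis bounds the right-hand side by $B\deg(p)$ uniformly in $N$, while Lemma~\ref{lem:ak and bbd}(i,ii) forces $\bbd_q^N>N$ (and allows the alternative that $\bbd_q^N$ eventually equals $\infty$, in which case the required inequality is trivial). Choosing any integer $N>B\deg(p)$ then produces $\bbd_q^N>N>B\deg(p)\geq \deg(\mfq^N)\deg(p)$, as needed.

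The main subtlety is the bookkeeping in the forward direction that converts the degree bound on $q$ to a uniform bound on each $\mfq^k$; this rests squarely on Lemma~\ref{lem:ak and bbd}(iii) together with the definitional form of $\mfq^k$ as a truncation of $\bbq^{\circ k}$. Beyond this, every step is a direct invocation of an earlier result in the paper, and no new machinery is required.
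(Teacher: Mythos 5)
Your proof is correct and follows essentially the same route as the paper's: the forward direction combines Theorem~\ref{thm:deg p bound} with the observation that $\deg(\mfq^k)\leq\deg(q)$, and the reverse direction applies Lemma~\ref{lem:a gaps}(iii) at an index $N$ large enough that $\bbd_q^N>N\geq B\deg(p)$. The only (harmless) deviation is that you obtain $J_p^{-1}\in M_g(\C\fax)$ from the chain rule via Remark~\ref{rem:poly inv PMI}, whereas the paper invokes Theorem~\ref{thm:PMI is poly}; since $q$ is assumed polynomial in that direction, your more elementary choice is perfectly adequate.
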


\begin{proof}
	Suppose $J_p^{-1}\in M_g(\C\fax)$ and $\deg(\mfq^k)\leq B$ for all $k$.
	In particular, $\deg(\mfq^{B\deg(p)})\leq B$, hence $\deg(\mfq^{B\deg(p)})\deg(p)\leq B\deg(p) < \bbd^{B\deg(p)}_q$ 
		and $q$ is a polynomial by Lemma~\ref{lem:a gaps}.
	
	Conversely suppose $q$ is a polynomial.
	It follows that $q(X)$ exists for all $X\in M(\C)^g$, and consequently $p(q(X)) = X = q(p(X))$ for all $X\in M(\C)^g$.
	Thus, $p$ is bijective and Theorem~\ref{thm:PMI is poly} implies $J_p^{-1}\in M_g(\C\fax)$.
	Next, Theorem~\ref{thm:deg p bound} implies $\deg(q)\leq \B(g,\deg(p)-1)+1 = B$ and since $\deg(\mfq^k)\leq \deg(q)$,
	we have exactly $\deg(\mfq^k)\leq B$ for all $k$.
\end{proof}

Lemma~\ref{lem:mfq max bound} hints at a simple algorithm for determining whether $q$, 
the inverse of a given polynomial $p\in (\C\fax_+)^g$, is a polynomial.
We still set $B = \B(g,\deg(p)-1)+1$.

\begin{corollary}
\label{cor:eitheror}
	Either $\mfq^{B} = q$ or $q$ is not a polynomial and $p$ is not injective.
\end{corollary}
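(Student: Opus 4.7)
The plan is a short contrapositive argument combining the degree bound of Theorem~\ref{thm:deg p bound}, the gap structure from Lemma~\ref{lem:ak and bbd}, and the Free Grothendieck Theorem.

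Assume $\mfq^B\neq q$. I would first argue that $q$ cannot be a polynomial. Suppose for contradiction that $q\in(\C\fax)^g$. Then $p\circ q=\x=q\circ p$ holds not only at the level of formal power series but also under matrix evaluation, so $p$ is bijective on each $M_n(\C)^g$. Theorem~\ref{thm:deg p bound} therefore gives the sharp bound $\deg(q)\leq \B(g,\deg(p)-1)+1=B$. By Lemma~\ref{lem:ak and bbd}(ii), $\bbd^B_q>B\geq\deg(q)$. Now writing $\bbq^{\circ B}(\x)[\z]=\mfq^B(\x)+r^B(\x)[\z]$ and using $\bbq(\x)[q(\x)]=q(\x)$ together with Lemma~\ref{lem:bba n+m} (so $\bbq^{\circ B}(\x)[q(\x)]=q(\x)$), I get
\[
q(\x)-\mfq^B(\x)=r^B(\x)[q(\x)].
\]
The left-hand side has degree at most $\deg(q)<\bbd^B_q$, while by construction every monomial of $r^B(\x)[\z]$ has degree at least $\bbd^B_q$, so every monomial of $r^B(\x)[q(\x)]$ has degree at least $\bbd^B_q$ as well. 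Hence both sides must vanish, giving $\mfq^B=q$, contradicting the assumption $\mfq^B\neq q$.

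Having shown $q$ is not a polynomial, injectivity of $p$ would contradict Theorem~\ref{thm:Free Groth Thm}, which guarantees that every injective free polynomial mapping admits a free polynomial inverse. Therefore $p$ is not injective, completing the dichotomy.

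The only real computation is the degree comparison in the first step; this is where the precise estimate of Theorem~\ref{thm:deg p bound} is critical, since without the explicit bound $\deg(q)\leq B$ one could not compare it with $\bbd^B_q$. The second step is a one-line appeal to the Free Grothendieck Theorem, which has already been established by this point in the paper.
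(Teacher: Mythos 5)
Your proof is correct and follows essentially the same route as the paper: the paper deduces the dichotomy by citing Lemma~\ref{lem:mfq max bound} (which itself rests on Theorem~\ref{thm:deg p bound} and the gap criterion of Lemma~\ref{lem:a gaps}), whereas you inline that gap argument, comparing $\deg(q)\leq B$ against $\bbd^B_q>B$, and then invoke the Free Grothendieck Theorem for non-injectivity just as the paper does implicitly. The underlying ingredients and logic are identical, so no further comparison is needed.
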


\begin{proof}
	Let $q = \sum_{w} \rho_w w$.
	Recall $\deg(\mfq^k)$ is an increasing sequence and $q$ and $\mfq^k$ agree on monomials of length less than $\bbd_q^k$.
	If $\mfq^B\neq q$ then there exists a $k>B$ and $w\in \fax$ with $\abs{w}>\bbd_q^B>B$ such that $\rho^k_w\neq 0$.
	In particular $\deg(\mfq^k)>B$, thus by Lemma~\ref{lem:mfq max bound}, $q$ is not a polynomial and $p$ is not injective.
\end{proof}

For any $p$, the algorithmic approach to computing $q$ is as follows.
\begin{itemize}
	\item[$-$] If: $J_p\notin M_g(\C\fax)$ then $q$ is not a polynomial;
	\item[$-$] Else: compute $B = \B(g,\deg(p)-1)+1$;
	\item[$-$] Set $k = 1$ and $\bbq^{\circ 0}(\x)[\z] = \z$;
	\item[$-$] Loop:
	\begin{itemize}
		\item[$\sim$] Compute $\bbq^{\circ k}(\x)[\z] = \bbq(\x)[\bbq^{\circ k-1}(\x)[\z]]$;
		\item[$\sim$] If: $\deg(\mfq^k)>B$ or $k>B$ then $q$ is not a polynomial;
		\item[$\sim$] ElseIf: $\deg(\mfq^k)\deg(p)<\bbd_q^k$ then $\mfq = q$;
		\item[$\sim$] Else: Increase $k$ by one.
	\end{itemize}
\end{itemize}
In at most $B$ loops, the above algorithm would either tell us that $q$ is not a polynomial or would return a polynomial
inverse $q$.

\bibliographystyle{alpha}
\bibliography{Free}

\end{document}